 \theoremstyle{plain}
 \newtheorem{theo}{Theorem}[section]
 \newtheorem{lemma}[theo]{Lemma}
 \newtheorem{propo}[theo]{Proposition}
 \newtheorem{prop}[theo]{Property}
 \newtheorem{coro}[theo]{Corollary}
 \theoremstyle{definition}
 \newtheorem{defi}[theo]{Definition}
 \theoremstyle{remark}
 \newtheorem{rem}[theo]{Remark}
 \numberwithin{equation}{section}
\mathchardef\emptyset="001F
\newcommand{\dx}{\,dx}
\newcommand{\dy}{\,dy}
\newcommand{\ie}{{; \it i.e., }}
\newcommand{\HH}{{\mathcal H}^{n-1}}
\newcommand{\I}{\mathcal{I}}
\newcommand{\B}{\mathscr{B}}
\newcommand{\T}{\mathcal{T}}
\let\e= \varepsilon
\newcommand{\R}{{\mathbb R}}
\newcommand{\Z}{{\mathbb Z}}
\newcommand{\N}{{\mathbb N}}
\newcommand{\Sph}{{\mathbb S}}
\newcommand{\Om}{\Omega}
\newcommand{\om}{\omega}
\newcommand{\mres}{\mathbin{\vrule height 1.6ex depth 0pt width 0.13ex\vrule height 0.13ex depth 0pt width 1.3ex}}
\newcommand{\wto}{\rightharpoonup}
\DeclareMathOperator\supp{supp}
\author[X. Pellet]{Xavier Pellet}
\address[X. Pellet]{Department of Mathematical Sciences, University of Bath, Bath, United Kingdom}
\email[]{X.P.J.Pellet@bath.ac.uk}
\author[L. Scardia]{Lucia Scardia}
\address[L. Scardia]{Department of Mathematics, Heriot-Watt University, United Kingdom}
\email[]{L.Scardia@hw.ac.uk}
\author[C.I. Zeppieri]{Caterina Ida Zeppieri}
\address[C.I. Zeppieri]{Angewandte Mathematik, WWU M\"unster, Germany}
\email[]{caterina.zeppieri@uni-muenster.de}
\title[Free-discontinuity functionals in random perforated domains]{Stochastic homogenisation of free-discontinuity functionals in random perforated domains}
\begin{document}

\maketitle

\begin{abstract} In this paper we study the asymptotic behaviour of a family of random free-discontinuity energies $E_\e$ defined on a randomly perforated domain, as $\e$ goes to zero. The functionals $E_\e$ model the energy associated to displacements of porous random materials that can develop cracks. To gain compactness for sequences of displacements with bounded energies, we need to overcome the lack of equi-coerciveness of the functionals. We do so by means of an extension result, under the assumption that the random perforations cannot come too close to one another. The limit energy is then obtained in two steps. As a first step we apply a general result of stochastic convergence of free-discontinuity functionals to a modified, coercive version of $E_\e$. Then the effective volume and surface energy densities are identified by means of a careful limit procedure.
\end{abstract}

\maketitle

\medskip

{\small
\noindent \keywords{\textsc{Keywords:} Homogenisation, $\Gamma$-convergence, free-discontinuity problems, randomly perforated domains, Neumann boundary conditions, porous materials, brittle fracture.}

\medskip

\noindent \subjclass{\textsc{MSC 2010:} 
49J45, 
49Q20,  
74Q05.  
}
}

\section{Introduction}

\noindent In this paper we prove a stochastic homogenisation result for free-discontinuity functionals defined on randomly perforated domains. More  precisely we consider the functionals $E_\e$ given by 
	\begin{equation}\label{intro:Ee}
		E_\e(\omega)(u,A)=
					\int_{A\setminus \e K(\omega)}f\left(\omega,\frac{x}{\e}, \nabla u\right)dx + \int_{S_u\cap (A\setminus \e K(\omega))}g\left(\omega,\frac{x}{\e},\nu_u\right)\, d\HH,
	\end{equation}
for $u \in SBV(A)$; here $A\subset \R^n$ is a bounded, Lipschitz domain, and $SBV(A)$ denotes the set of special functions of bounded variation in $A$. In \eqref{intro:Ee} the parameter $\om$ belongs to the sample space $\Om$ of a given probability space $(\Omega,\mathcal{T}, {P})$, whereas $\e>0$ sets the geometric scale of the problem. The integrands $f$ and $g$ are stationary random variables, thus they are to be interpreted as an ensemble of coefficients, and $K(\omega)$ denotes a collection of randomly distributed $n$-dimensional balls with random radii (see \eqref{def:P}). Since the integration in \eqref{intro:Ee} is performed only on the set $A\setminus \e K(\omega)$, the set $K(\omega)$ models a collection of randomly distributed perforations inside the material occupying the reference configuration $A$.  Energies of this type can be used to describe the elastic energy of a porous brittle random material.

In the deterministic \textit{periodic} setting, the limit behaviour of energies of type \eqref{intro:Ee} has been studied both in the case of Dirichlet conditions on the perforations \cite{FG} and in the case of natural boundary conditions \cite{BaFo, CS, FSP1}. 
Only very recently, in \cite{CDMSZ2}, the stochastic homogenisation of free-discontinuity functionals was considered, under quite general assumptions on the volume and surface integrands, and in the vector-valued case (see \cite{CDMSZ1}, and \cite{BDfV, GP} for the deterministic counterpart). In \cite{CDMSZ2}, however, the volume and surface integrands must satisfy non-degenerate lower bounds, which is not the case for $E_\e$, due to the presence of the perforations.

The study of the asymptotic behaviour of elliptic problems in \textit{randomly} perforated domains has a long history starting with the seminal work of Jikov \cite{Ji}. We refer the reader to the book \cite{JiKoOl-book} and the references therein for the classical results on this subject. More recently the random counterpart of the work by Cioranescu and Murat \cite{CiMu} has been also considered \cite{CDnonlinear, CDPoisson, GHV}. In this case, sequences $u_\e$ of equi-bounded energy can be trivially extended to zero inside $\e K(\om)$, due to the homogeneous \emph{Dirichlet boundary conditions}, and hence can be assumed from the onset to satisfy a priori bounds on the whole domain. In the Dirichlet setting the main difficulty in the analysis lies then in the characterisation of the limiting ``capacitary'' term. Since in this case no extension result for the $u_\e$ is needed, the assumptions on the geometry of the perforations can be rather mild \cite{GHV}.

In this paper we assume instead that sequences $u_\e$ of equi-bounded energy satisfy \emph{natural boundary conditions} on the perforations, which makes the compactness of minimising sequences subtle. In this setting the classical way to obtain compactness is to \textit{extend} the functions $u_\e$ inside the perforations in a way that keeps the functionals on the extended functions comparable with the functionals on $u_\e$. In the periodic case, and for Sobolev functions, the use of extension theorems as a powerful technique to treat degenerate problems is due to Khruslov \cite{Khru}, Cioranescu and Paulin \cite{CSJP}, and to Tartar \cite{Pec}. In that setting, the most general extension result is due to Acerbi, Chiad\`o Piat, Dal Maso, and Percivale \cite{ACDP}, and has been proved under minimal assumptions on the geometry of the periodic perforations, which in particular can be connected. 

In the random case a common approach to the homogenisation of perforated (or porous) materials is to assume the existence of an extension operator as a property of the domain (see, \textit{e.g.}, \cite[Chapter 8]{JiKoOl-book}). More precisely, it is often assumed that the perforated domain $A\setminus \e K(\om)$ is a random set, that it is open and connected, that its density (namely the expectation of its characteristic function) is strictly positive, and that there exists an extension operator from the perforated to the full domain. These assumptions guarantee compactness of sequences with equi-bounded energies, and allow to prove existence of the $\Gamma$-limit, and non-degeneracy of the limit energy. Alternatively, simplified random geometries are considered, for which one can prove directly that the random domain satisfies the assumptions above. This is the case for a class of disperse media, the so-called \emph{random spherical structure}\ie a system of many hard sphere particles. In the simplest case of such structure the domain has an underlying $\e$-periodic grid, and in each $\e$-cell the random perforation is a ball - with random radius and centre - which is $\e \delta$-separated from the boundary of the cell where it is contained, for a given $\delta>0$. A more general geometry is given by the case where the spherical holes are $2\e\delta$-separated from one another, but no underlying periodic ``safety'' grid is postulated. For random spherical structures it is shown, \textit{e.g.}, in \cite[Section 8.4]{JiKoOl-book} that if the spherical holes are not too close to one another, then the density of the domain is strictly positive, and some extension operator exists in the Sobolev setting. 

Our approach is in the same spirit, and we now explain it in some detail.
\subsection{Overview of the main results} In what follows we give a brief overview of the main results contained in this paper: An extension result for special functions of bounded variation in a randomly perforated domain, and the $\Gamma$-convergence of the functionals $E_\e$ in \eqref{intro:Ee}.

\smallskip

\textbf{The extension property in $SBV$.} The geometry we consider for the randomly perforated domain is the following: We assume that the perforations $K(\om)$ are disjoint balls of random centres and radii, and we require that the minimal distance between any two of them is $2\delta$, where $\delta>0$ is independent of the realisation $\om$. In other words, not only the perforations are separated, but also their $\delta$-neighbourhoods are so. 
Our first main result is an extension property for this class of domains in $SBV$ (Lemma \ref{t:ext-cell} and Theorem \ref{stochastic-extension-lemma-SBV}). We recall that the existence of an extension operator in $SBV$, for the Mumford-Shah functional, has been proved by Cagnetti and Scardia \cite{CS} in the periodic case. This result, however, cannot be applied directly to our case since the domain $A\setminus \e K(\om)$ is in general not periodic. Intuitively, we would like to apply the deterministic result in a $\delta$-neighbourhood of each component of $K(\omega)$, since by assumption such neighbourhoods are pairwise disjoint. If we did it naively, however, then we could have for each component of $K(\omega)$ a different extension constant, since the components of $K(\omega)$ are balls with possibly different centres and radii from one another. Consequently, we would not be able to obtain uniform bounds for the extended function, which are crucial for equi-coerciveness.

The way we obtain uniform bounds relies on the following construction. Let us focus on a generic perforation $B(\theta(\om), r(\om))$, where $\theta(\om)$ and $r(\om)$ are the (random) centre and radius, and denote with $A_\delta(\om)$ the concentric (spherical) annulus of radii $r(\om)$ and $r(\om)+\delta$. The idea is to divide the hole into dyadic annuli such that the ratio between the outer and inner radii of each annulus is fixed, and depends only on $\delta$. Since the deterministic extension is invariant under translations and homotheties, we can apply it iteratively and construct an extension from $A_\delta(\om)$ (where the function is defined thanks to the $\delta$-separation) to $B(\theta(\om), r(\om)+\delta)$, in a way that controls the extension constant at every step (see Lemma \ref{t:ext-cell}). We then repeat this procedure for every inclusion, and obtain an extension result in $A \setminus \e K(\om)$, with an extension constant independent of $\e$ and of $\om$ (Theorem \ref{stochastic-extension-lemma-SBV}). This is a key ingredient in the proof of the compactness for sequences with bounded energies $E_\e(\om)$ (Proposition \ref{prop:comp}).

\medskip

\textbf{The $\Gamma$-convergence result.}
Once the compactness is established, we prove the stochastic $\Gamma$-convergence of $E_\e(\omega)$ for $\e\to 0$ (Theorems \ref{en-density_vs} and \ref{theo:stochMS}). Our strategy is to resort to a perturbation argument. Namely, we first introduce a perturbed functional $E^{k}_\e(\omega)$, with volume and surface densities given by $f^k:=a^k f$ and $g^k:=a^k g$, where 
\begin{equation*}
		a^{k}(\omega,x)
		:= \begin{cases}
		1 & \mbox{if} \ x \in \R^n \setminus K(\omega), \\
		\frac{1}{k} & \mbox{if} \ x \in K(\omega).
		\end{cases}
	\end{equation*}
In other words, $E^{k}_\e(\omega)$ is obtained from $E_\e(\om)$ by filling the holes with a coefficient $\frac1k$, with $k\in \N$. The perturbed functionals are non-degenerate and coercive, hence for fixed $k$ the $\Gamma$-limit of $E_\e^k$ for $\e\to 0$ exists almost surely by \cite[Theorem 3.12]{CDMSZ2}. Moreover, we can identify the limit volume and surface energy densities, which are given by
\begin{equation}\label{fk}
f^k_{\rm hom}(\om,\xi)
=\lim_{t \rightarrow +\infty} \frac{1}{t^n}\inf\left\{\int_{tQ}f^k\left(\omega,x,\nabla u\right)dx
: u\in W^{1,p}(tQ), \ u = \xi\cdot x \ \text{ near } \partial (tQ)\right\},
\end{equation}
and 
\begin{equation}\label{gk}
g^k_{\rm hom}(\om,\nu) =\lim_{t \rightarrow +\infty} \frac{1}{t^{n-1}}\inf\left\{
\int_{S_u\cap tQ^\nu}\!\!\!g^k\left(\omega,x,\nu_u\right)\, d\HH
: u\in \mathcal P(tQ^\nu), \ u = u_{0,1,\nu} \ \text{ near } \partial (t Q^\nu)\right\},
\end{equation}
where $\xi\in \R^n$, $\nu\in \mathbb{S}^{n-1}$, $Q^\nu$ is the rotated unit cube with one face perpendicular to $\nu$, $u_{0,1,\nu}$ is the piecewise constant function equal to $1$ in the upper semi-cube and $0$ in the lower semi-cube, and $\mathcal{P}$ denotes the set of partitions with values in $\{0,1\}$.

The volume and the surface densities $f_{\rm hom}$ and $g_{\rm hom}$ of the $\Gamma$-limit of $E_\e(\omega)$ are then obtained as the limits as $k\to +\infty$ of $f^k_{\textrm{hom}}$ and $g^k_{\textrm{hom}}$, respectively. The most delicate part in the proof is to show that these limits coincide with
\begin{equation}\label{fl}
\lim_{t \rightarrow +\infty} \frac{1}{t^n}\inf\left\{\int_{tQ\setminus K(\om)}f\left(\omega,x,\nabla u\right)dx
: u\in W^{1,p}(tQ), \ u = \xi\cdot x \ \text{ near } \partial (tQ)\right\}
\end{equation}
and 
\begin{equation}\label{gl}
\lim_{t \rightarrow +\infty} \frac{1}{t^{n-1}}\inf\left\{
\int_{S_u\cap (tQ^\nu\setminus K(\om))}\!\!\!g\left(\omega,x,\nu_u\right)\, d\HH
: u\in \mathcal P(tQ^\nu), \ u = u_{0,1,\nu} \ \text{ near } \partial (t Q^\nu)\right\},
\end{equation}
respectively. This step requires a careful use of extension techniques for Sobolev functions (Lemma \ref{Sob:ext-cell}) and for Caccioppoli partitions (Lemma \ref{Part:ext-cell}) separately, in order to construct, starting from a competitor for the minimisation problem in \eqref{fl} (resp.~\eqref{gl}) a competitor for the minimisation problem in \eqref{fk} (resp.~\eqref{gk}). 
Lemma \ref{Part:ext-cell}, in particular, requires the use of a technical lemma proved by Congedo Tamanini in \cite{TC1} (see also \cite{TC2}), which establishes some regularity properties for minimisers of the perimeter functional. These regularity properties, in turn, ensure that minimising partitions are constant on a sphere  around each perforation, from which we can then perform a trivial extension at no additional energetic cost.

Finally, our assumptions on the geometry of $K(\om)$ allow us to prove that the limit densities $f_{\rm hom}$ and $g_{\rm hom}$ are non-degenerate.

\subsection{Conclusions and outlook.} In this paper we prove a stochastic homogenisation result for free-discontinuity functionals on randomly perforated domains, without imposing any boundary conditions on the perforations. 
Our approach relies on the construction of an extension operator guaranteeing that, given a function in the perforated domain, the extended function in the whole domain is bounded, in energy, in terms of the original function. The construction of the extension operator, in turn, is guaranteed by our assumptions on the geometry of the randomly perforated domain. In particular, the assumption of $\delta$-separation of the holes is crucial in our analysis. This assumption, moreover, also ensures that the \emph{density} of the random domain is strictly positive, and hence the non-degeneracy of the limit energy. 

It would be interesting to investigate whether our result could work in the more general case where the existence of a fixed safety distance $\delta$ is replaced by a more global condition of ``average'' separation, \textit{e.g.} in the spirit of \cite[Section 8.4]{JiKoOl-book}.


\section{Setting of the problem and statement of the main result} 

\subsection{Notation}
We introduce here all the notation that we need.
\begin{itemize}
	\item $\N^\ast:= \{z\in \Z: z\geq 1\}$;
	\item For $\rho>0$ and $\theta\in \R^n$ we define $Q_{\rho}(\theta):=\{x \in \mathbb{R}^{n}  :  |x_{i}-\theta_i|<\frac{\rho}2, \ i=1,..,n\}$; we use the shorthands 
	$Q_{\rho} = Q_{\rho}(0)$ and $Q=Q_1$; 
	\item For $\rho>0$ and $\theta\in \R^n$ we define $B(\theta,\rho):=\{x\in \R^n: |x-\theta|<\rho\}$;
	\item For $0<r<s$ and $\theta\in \R^n$ we define the open annulus $B_{r,s}(\theta):=B(\theta,s)\setminus \overline B(\theta,r)$ and denote $B_{r,s}=B_{r,s}(0)$;
	\item $\Sph^{n-1}:=\{x\in \mathbb{R}^{n}: |x|=1\}$;
	\item $\mathcal{L}^n$ denotes the Lebesgue measure on $\R^n$ and $\mathcal{H}^{n-1}$ the $(n-1)$-dimensional Hausdorff measure on $\R^n$;
	\item $\mathcal{A}$ denotes the family of bounded domains of $\mathbb{R}^{n}$ with Lipschitz boundary; 
	\item We denote with $\B^n$ the Borel $\sigma$-algebra on $\R^n$ and with $\B(\Sph^{n-1})$ the Borel $\sigma$-algebra on $\mathbb{S}^{n-1}$;
	\item For $\xi \in \R^n$, we denote with $\ell_\xi$ the linear function $\ell_\xi(x) = \xi\cdot x$ for $x\in \R^n$;
	\item For $x\in \R^n$, $t>0$ and $\nu\in \Sph^{n-1}$, we denote with $Q^{\nu}_t(x)$ the cube of side-length $t>0$, centred at $x$ with one face orthogonal to $\nu$;
	\item For $x\in \R^n$ and $\nu\in \Sph^{n-1}$, we set 
\begin{equation*}
			u_{x,1,\nu}(y):= \begin{cases} 
			1 & \mbox{if} \ (y-x)\cdot \nu \geq 0 \\ 
			0 & \mbox{if} \ (y-x)\cdot \nu < 0.
			\end{cases}
		\end{equation*}
\end{itemize}

The functional setting for our analysis is that of \textit{generalised special functions of bounded variation}. We recall some basic definitions and refer to \cite{AFP} for a more comprehensive introduction to the topic.  

For $A\in \mathcal A$, the space of special functions of bounded variation in $A$ is defined as
\begin{equation*}
SBV(A)= \{u \in BV(A): \ Du = \nabla u \,\mathcal{L}^{n} + (u^{+} - u^{-})\nu_{u} \mathcal{H}^{n-1} \mres S_{u}\}.
\end{equation*}
Here $S_{u}$ denotes the approximate discontinuity set of $u$, $\nu_{u}$ is the generalised normal to $S_{u}$, $u^{+}$ and $u^{-}$ are the traces of $u$ on both sides of $S_{u}$. We also consider the space 
$$
\mathcal{P}(A)=\{u\in SBV(A): \nabla u=0, \; u\in \{0,1\} \ \mathcal{L}^{n}\textrm{-a.e.}, \,\mathcal{H}^{n-1}(S_u)<+\infty\};
$$
hence every $u$ in $\mathcal{P}(A)$ is a partition in the sense of \cite[Definition 4.21]{AFP}. 

For $p>1$, we define the following vector subspace of $SBV(A)$:
\begin{equation*}
SBV^{p}(A)= \{u \in SBV(A): \ \nabla u\in L^{p}(A) \text{ and } \mathcal{H}^{n-1}(S_{u})<+\infty\}.
\end{equation*}
We consider also the larger space of \textit{generalised special functions of bounded variation} in $A$, 
\begin{equation*}
GSBV(A)= \{u \in L^{1}(A): \ (u \wedge m) \vee (-m) \in SBV(A) \text{ for all } m\in \N\}.
\end{equation*}
By analogy with the case of $SBV$ functions, we write
\begin{equation*}
GSBV^{p}(A)= \{u \in GSBV(A): \ \nabla u\in L^{p}(A) \text{ and } \mathcal{H}^{n-1}(S_{u})<+\infty\}.
\end{equation*}

\subsection{Volume and surface integrands} Let $p>1$, $0<c_1\leq c_2 <+\infty$, $L>0$, and  
let $f \colon \R^n \times \R^n \longrightarrow [0,+\infty)$ be a Borel function on $\R^n{\times} \R^{n}$ satisfying the following conditions: 
\begin{itemize}
\item[$(f1)$] (lower bound) for every $x \in \R^n$ and every $\xi \in \R^{n}$
$$
c_1 |\xi|^p \leq f(x,\xi);
$$
\item[$(f2)$] (upper bound) for every $x \in \R^n$ and every $\xi \in \R^{n}$
$$
f(x,\xi) \leq c_2(1+|\xi|^p);
$$
\item[$(f3)$] (continuity in $\xi$) for every $x \in \R^n$ we have
$$
|f(x,\xi_1)-f(x,\xi_2)| \leq L\big(1+|\xi_1|^{p-1}+|\xi_2|^{p-1}\big)|\xi_1-\xi_2|
$$
for every $\xi_1$, $\xi_2 \in \R^{n}$.
\end{itemize}
Let $0<c_3\leq c_4 <+\infty$ and  let $g \colon \R^n \times \Sph^{n-1} \longrightarrow [0,+\infty)$ be a Borel function on $\R^n \times \Sph^{n-1}$ satisfying the following conditions: 
\begin{itemize}
\item[$(g1)$] (lower bound) for every $x \in \R^n$ and every $\nu \in \Sph^{n-1}$
$$
c_3 \leq g(x,\nu);
$$
\item[$(g2)$] (upper bound) for every $x  \in \R^n$ and every $\nu \in \Sph^{n-1}$
$$
g(x,\nu) \leq c_4;
$$
\item[$(g3)$] (symmetry) for every $x \in \R^n$  and every $\nu \in \Sph^{n-1}$
$$
g(x,\nu)=g(x,-\nu).
$$
\end{itemize}

\subsection{Stochastic framework} Let $(\Omega,\mathcal{T},{P})$ be a complete probability space. 
We start by recalling some definitions. 

\begin{defi}[Group of $P$-preserving transformations]
A group of $P$-preserving transformations on $(\Om,\T,P)$ is a family $(\tau_y)_{y\in \R^n}$ of $\T$-measurable mappings $\tau_y \colon \Om \to \Om$ satisfying the following properties:
\begin{itemize}
\item (measurability) the map $(\om,y) \mapsto \tau_y(\om)$ is $(\T\otimes \B^n, \T)$-measurable;
\item (bijectivity) $\tau_y$ is bijective for every $y\in \R^n$;
\item (invariance) $P(\tau_y(E))=P(E)$,\, for every $E\in \T$ and every $y\in \R^n$;
\item (group property) $\tau_{0}={\mathrm id}_\Om$ (the identity map on $\Om$) and $\tau_{y+y'}=\tau_{y} \circ \tau_{y'}$ for every $y, y'\in \R^n$. 
\end{itemize}
\noindent If, in addition, every set $E\in \T$ which satisfies 
$\tau_y (E)=E$ for every $y\in \R^n$
has probability $0$ or $1$, then $(\tau_y)_{y\in\R^n}$ is called {\it ergodic}.
\end{defi}

We are now in a position to define the notion of stationary random integrand.
\begin{defi}[Stationary random integrand]\label{def:stationary-integrand}  Let $(\tau_y)_{y\in \R^n}$ be a group of $P$-preserving transformations on $(\Om,\T,P)$. 
We say that $f\colon \Omega \times \R^n \times \R^n \longrightarrow [0,+\infty)$ is a \emph{stationary random volume integrand} if
\begin{itemize}
\item[(a)]  $f$ is $(\T\otimes \B^n\otimes \B^{n})$-measurable; 

\smallskip

\item[(b)]  $f(\om,\cdot,\cdot)$ satisfies $(f1)$--$(f3)$ for every $\om\in \Om$;

\smallskip
 
\item[(c)] $f(\om,x+y,\xi)=f(\tau_y(\om),x,\xi)$,
for every $\om\in \Om$, $x,y\in \R^n$, and $\xi\in \R^{n}$.
\end{itemize}

\smallskip

\noindent Similarly, we say that $g \colon \Om \times \R^n \times \Sph^{n-1} \longrightarrow [0,+\infty)$ is a \textit{stationary random surface integrand}  if
\begin{itemize} 
\item[(d)] $g$ is $(\T\otimes \B^n\otimes \B(\Sph^{n-1}))$-measurable;

\smallskip

\item[(e)] $g(\om,\cdot,\cdot)$ satisfies $(g1)$--$(g3)$ for every $\om\in \Om$;

\smallskip

\item[(f)] $g(\om,x+y,\nu)=g(\tau_y(\om),x,\nu)$,
for every $\om\in \Om$, $x,y\in \R^n$, and $\nu \in \Sph^{n-1}$.
\end{itemize}

\noindent If in addition $(\tau_y)_{y\in \R^n}$ is an ergodic group of $P$-preserving transformations, then we say that $f$ and $g$ are ergodic.  

\end{defi} 

We also recall the definition of random domain. The main difference with the classical definition given in, \textit{e.g.}, \cite[Chapter 8]{JiKoOl-book} is that we do not assume any ergodicity for the group $(\tau_y)_{y\in \R^n}$. 

\begin{defi}[Random domain] Let $(\tau_y)_{y\in \R^n}$ be a group of $P$-preserving transformations on $(\Om,\T,P)$. 
A \emph{random domain} is a map $\om \mapsto D(\om)$ from $\Om$ to $\R^n$ such that: 
\begin{itemize}
\item the map $(\om,x) \mapsto \chi_{D(\om)}(x)$ is $(\T\otimes \B^n)$-measurable;
\item for every $\om\in \Om$, $x,y\in \R^n$ it holds
\begin{equation}\label{sta-dom}
\chi_{D(\om)}(x+y)=\chi_{D(\tau_y\om)}(x).
\end{equation}
 \end{itemize}
\end{defi}

\begin{rem}
We note that $D$ is a random domain if and only if for every $\om \in \Omega$
\begin{equation}\label{D-1}
D(\om)=\big\{x\in \R^n \colon \tau_x \om \in \widetilde D\big\}
\end{equation}
for some $\widetilde D \in \T$. 
Indeed if $D(\om)$ is as in \eqref{D-1} for some $\widetilde D\in \T$, then it is immediate to check that $\chi_{D(\om)}$ satisfies \eqref{sta-dom}. If on the other hand $\chi_{D(\om)}$ satisfies \eqref{sta-dom}, we have that $\chi_{D(\om)}(x)=\chi_{D(\tau_x \om)}(0)$ and therefore
\begin{equation*}
\widetilde D = \{\om \in \Om \colon 0\in D(\om)\}. 
\end{equation*}

\end{rem}

\begin{defi}[Density of a random domain]\label{dens-sd} Let $D$ be a random domain, let $\widetilde D \in \T$ be as in \eqref{D-1}, and let $\mathscr I\subset \T$ denote the $\sigma$-algebra of $(\tau_y)_{y\in \R^n}$-invariant sets; that is, $\mathscr I:= \{E\in \T: \tau_y(E) = E \quad \forall\, y\in \R^n\}$.
The function $d \colon \Om \to [0,+\infty)$ defined for every $\om\in \Om$ as $d(\om):=\mathbb{E}[\chi_{\widetilde D}|\mathscr I](\om)$ is called the pointwise density of $D$. 
\end{defi}

\begin{rem}
By the definition of conditional expectation we have that
$$
\int_{\Om} \mathbb{E}[\chi_{\widetilde D}|\mathscr I](\om)\, dP(\om) = \int_{\Om} \chi_{\widetilde D}(\om)\, dP(\om) = P(\widetilde D),
$$
since $\Om\in \mathscr I$. The nonnegative number $\bar d:=P(\widetilde D)$ is usually referred to as the (average) density of $D(\om)$ (see e.g. \cite[Chapter 8]{JiKoOl-book}).   
\end{rem}

\begin{rem}[Birkhoff's Ergodic Theorem]\label{rem:B}
Let $D$ be a random domain and let $\e>0$. For every $\om \in \Om$ and $x\in \R^n$ set 
\begin{equation}\label{d-e}
d_\e(\om,x):=\chi_{\e D(\om)}(x).
\end{equation}
Then, the Birkhoff Ergodic Theorem ensures that for $P$-a.e. $\om \in \Om$
\begin{equation*}
d_\e(\om, \cdot) \stackrel{*} {\wto} d(\om):=\mathbb{E}[\chi_{\widetilde D}|\mathscr I](\om) \quad \text{in} \quad L^\infty_{\rm loc}(\R^n)
\end{equation*}
as $\e\to 0$. If moreover $D$ is ergodic then 
$$
d_\e(\om, \cdot) \stackrel{*} {\wto} \bar d:=P(\widetilde D) \quad \text{in} \quad L^\infty_{\rm loc}(\R^n). 
$$
\end{rem}

\noindent We require the following additional assumptions on the geometry of the random domain $D$.

\begin{defi}[Random perforated domain]\label{ass:random-domain}
Let $\delta>0$ and $r_\ast>0$ be fixed and independent of $\om$, let $D$ be a random domain, and set, for $\om\in \Omega$,  
$K(\om):=\R^n\setminus D(\om)$. We say that $K$ is a random perforated domain if:

\smallskip

(K1) for every $\om \in \Om$ the set $K(\om)$ is the union of closed balls with radius smaller than $r_*$;

\smallskip

(K2) for every $\om \in \Om$ the distance between any two distinct balls in $K(\om)$ is larger that $2\delta$.

\end{defi}

Properties (K1) and (K2) can be rephrased as follows: 

\begin{itemize}
\item $K(\om)$ is of the form
\begin{equation}
		K(\omega) := \bigcup_{i \in \mathcal{I}} \overline B(\theta_{i}(\omega),r_i(\omega)),
		\label{def:P}
	\end{equation}
with $r_i(\omega)\in (0,r_\ast)$, $\theta_i(\om)\in \R^n$, and with $\theta_i(\om)\neq \theta_j(\om)$ for $i\neq j$, for every $i,j\in \mathcal I$ and for $P$-a.e.~$\omega\in \Om$;

\item for every $i,j\in \mathcal I$ with $i\neq j$ 
	\begin{equation}\label{separation-K}
		B(\theta_{i}(\omega),r_i(\omega)+\delta) \cap B(\theta_{j}(\omega),r_j(\omega)+\delta)=\emptyset.
	\end{equation}

\end{itemize}


\noindent 
The set $K(\omega)$ is a special type of \textit{random spherical structure}, as defined in \cite[Definition 8.19]{JiKoOl-book}. It is special because of the strong $2\delta$-separation of the spherical perforations, which is crucial in our analysis. 

\begin{rem}[Example of a random perforated domain]
The simplest example of a random perforated domain can be obtained as follows. Let $\mathcal L\subset \R^n$ be a regular Bravais lattice (\textit{e.g.}, the cubic lattice or the triangular lattice for $n=2$).  Let $Q(\mathcal L)$ denote the periodicity cell of the lattice, and let $B\subset \subset Q(\mathcal L)$ be a ball well contained in the cell. Then an admissible set of perforations is
given by
$$
K_\mathcal L(\om) = \bigcup_{y\in \mathcal Y(\om)}(B+y),
$$ 
where $\mathcal Y(\om) \subset \mathcal L$ is a random set obtained, for instance, by running i.i.d.\ Bernoulli trials at each $y\in \mathcal L$. 
\end{rem}

We now show that a random stationary domain as in Definition \ref{ass:random-domain} has a positive pointwise density $d(\om)$ for $P$-a.e. $\om\in\Om$.

\begin{prop}\label{property:d}
Let $D$ be a random perforated domain as in Definition \ref{ass:random-domain} and let $d$ be its pointwise density as in Definition \ref{dens-sd}. Then $d(\om)>0$ for $P$-a.e. $\om\in\Om$.
\end{prop}

\begin{proof}
Let $\e>0$ be small and let $d_\e$ be as in \eqref{d-e}; then for $\om \in \Om$
$$
\int_Q d_\e(\om,x)\dx = \int_Q \chi_{\e D(\om)}(x)\dx = \mathcal L^n(Q\cap \e D(\om))=\mathcal L^n(Q\setminus \e K(\om)),
$$
where $Q$ denotes the unit cube. By the Birkhoff Ergodic Theorem we deduce that, in particular, 
\begin{equation}\label{conv-leb-meas}
\lim_{\e \to 0}\mathcal L^n(Q\setminus \e K(\om))= d(\om), 
\end{equation}
for $P$-a.e. $\om \in \Om$. We now show that, by Definition \ref{ass:random-domain}, the left hand-side of \eqref{conv-leb-meas} can be estimated from below by a positive constant independent of $\om$. 

Let $N_\e$ denote the number of components $ \e B(\theta_{i}(\omega),r_i(\omega))$ of $\e K(\om)$ such that  $\e B(\theta_{i}(\omega),r_i(\omega)+\delta)$ is contained in $Q$. Note that the total number of perforations intersecting $Q$ is $N_\e + N^b_\e$, where $N_\e^b$ denotes the number of ``boundary'' perforations, namely the components of $\e K(\om)$ that do not intersect $Q_{1-2\e(r_\ast + \delta)}$. We can neglect the boundary perforations in the estimate of $\mathcal L^n(Q\cap \e K(\om))$ (and hence of $\mathcal L^n(Q\setminus \e K(\om))$) since they provide an infinitesimal volume contribution. The assumption of $2\delta \e$-separation of the components of $\e K(\om)$ ensures that $N_\e \leq (2\e \delta)^{-n}$. 

If $N_\e \ll \e ^{-n}$ we immediately get
$$
\mathcal L^n(Q\cap \e K(\om)) \leq c_n N_\e \e^n r_*^n \to 0 \quad \text{as} \quad \e \to 0,
$$
where $c_n := \mathcal{L}^n(B(0,1))$ and therefore
$$
\mathcal L^n(Q\setminus \e K(\om))= \mathcal L^n(Q)- \mathcal L^n(Q\cap \e K(\om)) \geq \frac{1}{2} 
$$
for small enough $\e>0$ and every $\om \in \Om$. 

We now assume that $N_\e \sim \e^{-n}$. First of all, by the definition of $N_\e$, and by the $2\delta \e$-separation of the components of $\e K(\om)$, we have that 
$$
\e B(\theta_j(\om),r_j(\om)+\delta) \setminus \e B(\theta_j(\om),r_j(\om)) \subset Q\setminus \e K(\om). 
$$
Consequently we have 
$$
 \mathcal L^n(Q\setminus \e K(\om)) \geq N_\e c_n(\e \delta)^n,
$$
where to establish the last inequality we have used that  
$$
\mathcal L^n (\e B(\theta_j(\om),r_j(\om)+\delta) \setminus \e B(\theta_j(\om),r_j(\om)))= \e^n c_n\big((r_j(\om)+\delta)^n-r_j(\om)^n\big)\geq c_n\e^n \delta^n. 
$$
Therefore also in this case we have that
$$
\lim_{\e\to 0} \mathcal L^n(Q\setminus \e K(\om)) \geq \lim_{\e \to 0} N_\e c_n(\e \delta)^n =c>0,
$$
and this concludes the proof.
\end{proof}

\subsection{Energy functionals and statement of the main result} We now introduce the sequence of functionals we are going to study.

For $\om\in \Om$ and $\e>0$ we consider the random functionals $E_\e(\om)\colon L^1_{\rm{loc}}(\R^n) \times \mathcal{A} \longrightarrow [0,+\infty]$ defined as 
\begin{equation}\label{def:Eneps}
		E_\e(\omega)(u,A)
		\displaystyle
		:= \begin{cases}
			\displaystyle
			\int_{A\setminus \e K(\omega)}\!\!\!f\left(\omega,\frac{x}{\e},\nabla u\right)dx + \int_{S_u\cap (A\setminus \e K(\omega))}\!\!\!g\left(\omega,\frac{x}{\e},\nu\right)\, d\HH & \mbox{if} \ u_{|A} \in GSBV^p(A), \\
			+\infty & \mbox{otherwise,}
		\end{cases}
	\end{equation}
where $f$ and $g$ are stationary random integrands as in Definition \ref{def:stationary-integrand}, and $K(\om)$ is as in Definition \ref{ass:random-domain} (see Figure \ref{f:RPD}).

\begin{figure}[h!]
\includegraphics[width=0.38\textwidth]{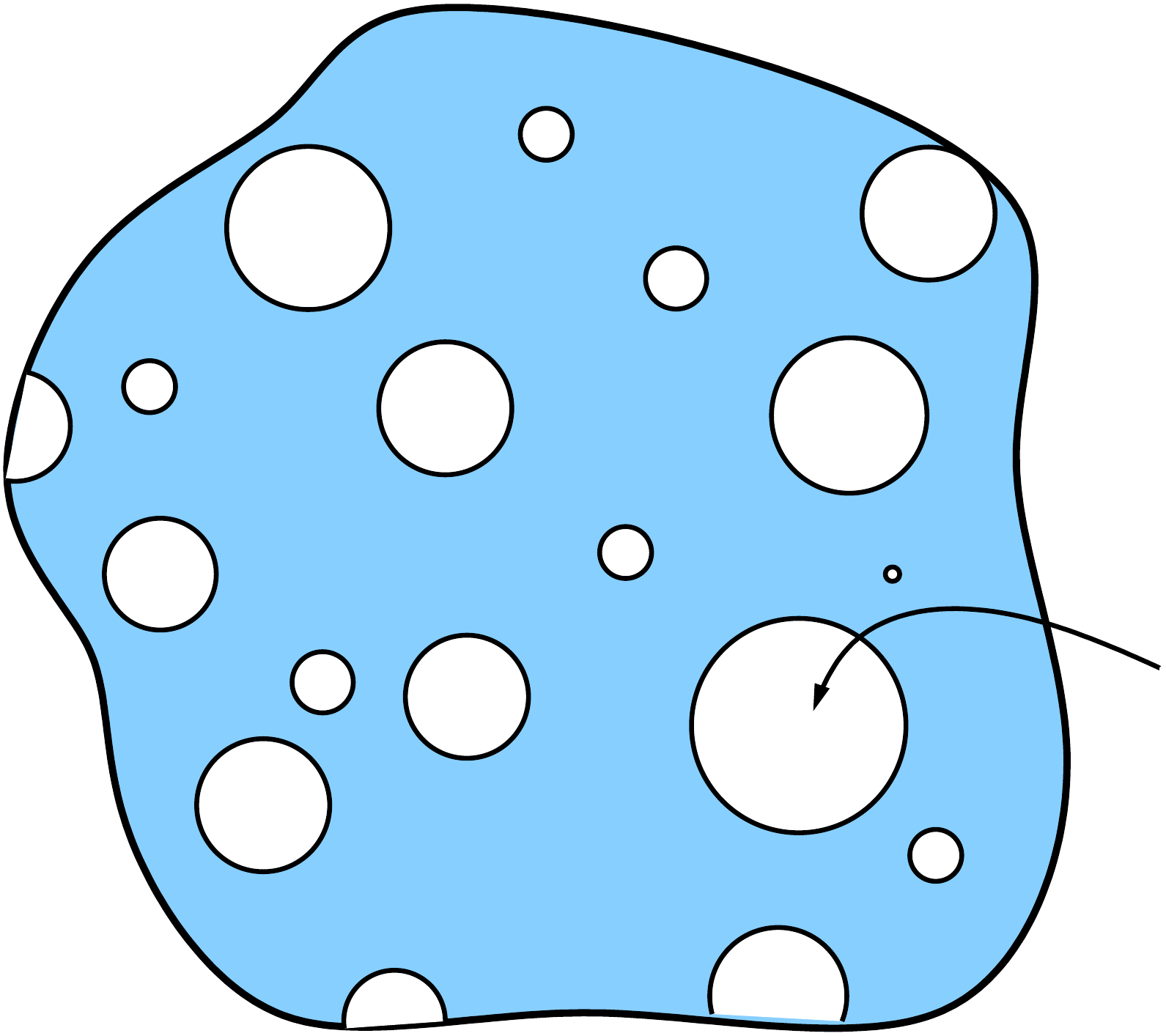}
\caption{The random perforated domain $A\setminus \e K(\om)$.}
\label{f:RPD}
\begin{picture}(0,0) 
\put(75,50){\Large{$A$}}
\put(88,82){{$\e B(\theta_i(\om),r_i(\om))$}}
\end{picture}
\end{figure}

Let moreover $F(\om), G(\om): L^1_{\rm{loc}}(\R^n) \times \mathcal{A} \longrightarrow [0,+\infty]$ be defined as
\begin{equation}\label{Effe}
		F(\omega)(u,A)
		\displaystyle
		:= \begin{cases}
			\displaystyle
			\int_{A\setminus K(\omega)}f\left(\omega,x,\nabla u\right)dx & \mbox{if} \ u_{|A} \in W^{1,p}(A), \\
			+\infty & \mbox{otherwise,}
		\end{cases}
	\end{equation}
	and
	\begin{equation}\label{Gi}
G(\omega)(u,A)
		\displaystyle
		:= \begin{cases}
			\displaystyle
			\int_{S_u\cap (A\setminus K(\omega))}g\left(\omega,x,\nu_u\right)\, d\HH & \mbox{if} \ u_{|A} \in GSBV^p(A), \\
			+\infty & \mbox{otherwise.}
		\end{cases}
			\end{equation}
Let $A\in \mathcal{A}$ be fixed; for $v\in L^1_{\rm{loc}}(\R^n)$, with $v_{|A}\in W^{1,p}(A)$, we define
	\begin{equation}\label{emme}
		m^{1,p}_{F(\omega)}(v,A) := \inf\big\{F(\omega)(u,A): \ u\in L^1_{\rm{loc}}(\R^n), u_{|A}\in W^{1,p}(A), \ u = v \ \text{ near } \partial A\big\}.
	\end{equation}
Similarly, for $v\in L^1_{\rm{loc}}(\R^n)$, with $v_{|A} \in \mathcal P(A)$, we define
	\begin{equation}\label{emmeG}
		m^{\textrm{pc}}_{G(\omega)}(v,A) := \inf\big\{G(\omega)(u,A): \ u\in L^1_{\rm{loc}}(\R^n), u_{|A}\in \mathcal P(A), \ u = v \ \text{ near } \partial A\big\}.
	\end{equation}
In the formulas above, by ``$u=v$ near $\partial A$'' we mean that there exists a neighbourhood $U$ of $\partial A$ in $\R^n$ such that $u=v$ $\mathcal{L}^n$-a.e.\ in $U \cap A$.

\smallskip

The following theorem is the main result of this paper. 

\begin{theo}[Homogenisation theorem]\label{statement-main-thm}
Let $f$ and $g$ be stationary random volume and surface integrands, and let $D\subset \R^n$ be a random perforated domain as in Definition \ref{ass:random-domain}. Assume that the stationarity of $f$, $g$ and $D$ is satisfied with respect to the same group $(\tau_y)_{y\in \R^n}$ of $P$-preserving transformations on $(\Om,\T,P)$. Let $\e>0$, and let $E_\e$ be the functionals defined as in \eqref{def:Eneps}. 

\smallskip

I) {\rm (Compactness)} Let $\omega\in \Omega$ and $A\in \mathcal A$ be fixed; let $(u_\e) \subset L^1_{\rm loc}(\R^n)$ be such that 
$$
\sup_{\e>0} \Big(E_\e(\omega)(u_\e,A)+\|u_\e\|_{L^\infty (A\setminus \e K(\omega))}\Big)<+\infty.
$$
Then there exist a sequence $(\tilde u_\e) \subset SBV^p(A)\cap L^1_{\rm loc}(\R^n)$ and a function $u\in SBV^p(A)\cap L^1_{\rm loc}(\R^n)$ such that $\tilde u_\e = u_\e$ $\mathcal L^n$-a.e. in $A\setminus \e K(\omega)$ and (up to a subsequence not relabelled) $\tilde u_\e \to u$ strongly in $L^1(A)$. 

\smallskip

II) {\rm(Almost sure $\Gamma$-convergence)} There exists $\Om'\in \mathcal T$, with $P(\Om')=1$, such that for every $\om\in \Om'$ the functionals $E_\e(\om)$ $\Gamma$-converge with respect to the $L^1_{\rm loc}(\R^n)$-convergence, as $\e\to 0$, to the functional $E_{\rm hom}(\om) \colon  L^{1}_{\rm loc}(\mathbb{R}^{n})\times \mathcal{A} \longrightarrow [0,+\infty]$ given by
	\begin{equation}\label{aaa}
		E_{\rm hom}(u,A)
		\displaystyle
		= \begin{cases}
		\displaystyle
		\int_{A}f_{\rm hom}(\om,\nabla u) \ dx 
		+\int_{A\cap S_{u}}g_{\rm hom}(\om,\nu_u)\ d\mathcal{H}^{n-1} 
		& \mbox{if} \ u_{|_A} \in GSBV^{p}(A), \\
		+\infty & \mbox{otherwise}.
		\end{cases}
	\end{equation} 
In \eqref{aaa}, for every $\om\in \Omega'$, $\xi \in \mathbb{R}^{n}$, and $\nu \in \mathbb{S}^{n-1}$, 
\begin{equation*}
f_{\mathrm{hom}}(\om,\xi):= \lim_{t\to +\infty} \frac{1}{t^n} m^{1,p}_{F(\om)} (\ell_\xi, Q_t(0)),
\end{equation*}
and
\begin{equation*}
g_{\mathrm{hom}}(\om,\nu):=\lim_{t\to +\infty} \frac{1}{t^{n-1}}m^{\mathrm{pc}}_{G(\om)}(u_{0,1,\nu},Q^\nu_t(0)),
\end{equation*}
with $m^{1,p}_{F(\om)}$ and $m^{\mathrm{pc}}_{G(\om)}$ defined as in \eqref{emme} and \eqref{emmeG}, respectively. If, in addition, $f$, $g$ and $D$ are ergodic, then $f_{\mathrm{hom}}$ and $g_{\mathrm{hom}}$ are independent of $\om$. 

\smallskip

III) {\rm (Properties of $f_{\rm hom}$ and $g_{\rm hom}$)} The homogenised volume integrand $f_{\rm hom}$ satisfies the following properties:
\begin{itemize}
\item[i.] {\rm(measurability)} $f_{\rm hom}$ is $(\T\otimes \B^n)$-measurable;
\item[ii.] {\rm(bounds)} there exists $\tilde c_0>0$ such that 
$$
\tilde c_0 |\xi|^p\leq f_{\mathrm{hom}}(\om,\xi) \leq c_2(1+ |\xi|^p),
$$
for every $\om \in \Om'$ and every $\xi\in \R^n$, with $c_2$ as in $(f2)$;
\item[iii.] {\rm(continuity)} there exists $L'>0$ such that
$$
|f_{\rm hom}(\om,\xi_1)-f_{\rm hom}(\om,\xi_2)| \leq L'\big(1+|\xi_1|^{p-1}+|\xi_2|^{p-1}\big)|\xi_1-\xi_2|,
$$
for every $\om\in \Om'$ and every $\xi_1$, $\xi_2 \in \R^{n}$.
\end{itemize}
Additionally, the homogenised surface integrand $g_{\rm hom}$ satisfies:
\begin{itemize}
\item[iv.] {\rm(measurability)} $g_{\rm hom}$ is $(\T\otimes \B(\Sph^{n-1}))$-measurable;
\item[v.] {\rm(bounds)} there exists $\tilde c_0>0$ such that 
\begin{equation}\label{g-bds}
\tilde c_0 \leq g_{\mathrm{hom}}(\om,\nu) \leq  c_4,
\end{equation}
for every $\om\in \Om'$ and every $\nu\in \Sph^{n-1}$, with $c_4$ as in $(g2)$;
\item[vi.] {\rm(symmetry)} $g_{\rm hom}(\om,\nu)=g_{\rm hom}(\om,-\nu)$, for every $\om\in \Om'$ and every $\nu \in \Sph^{n-1}$.
\end{itemize}
\end{theo}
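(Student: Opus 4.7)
The plan is to follow the perturbation strategy outlined in the introduction, with the $SBV$-extension result as the main compactness tool.

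\textbf{Part I (Compactness).} Given $(u_\e)$ with $\sup_\e(E_\e(\om)(u_\e,A)+\|u_\e\|_{L^\infty(A\setminus\e K(\om))})<+\infty$, I would apply Theorem \ref{stochastic-extension-lemma-SBV} to obtain $\tilde u_\e\in SBV^p(A)\cap L^\infty(A)$ coinciding with $u_\e$ on $A\setminus\e K(\om)$ and satisfying
\[
\|\nabla\tilde u_\e\|_{L^p(A)}^p+\mathcal{H}^{n-1}(S_{\tilde u_\e}\cap A)+\|\tilde u_\e\|_{L^\infty(A)}\leq C\bigl(E_\e(\om)(u_\e,A)+\|u_\e\|_{L^\infty(A\setminus\e K(\om))}\bigr)
\]
uniformly in $\e$. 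Ambrosio's $SBV^p$ compactness theorem then yields $\tilde u_\e\to u$ in $L^1(A)$ along a subsequence, with $u\in SBV^p(A)$.

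\textbf{Part II ($\Gamma$-convergence).} For every $k\in\N$ the perturbed integrands $f^k=a^k f$ and $g^k=a^k g$ are non-degenerate stationary random integrands, so \cite[Theorem 3.12]{CDMSZ2} applies: on a set $\Om'\in\mathcal{T}$ of full probability (obtained as a countable intersection over $k\in\N$), $E_\e^k(\om)$ $\Gamma$-converges to $E^k_{\rm hom}(\om)$, with cell-formula densities $f^k_{\rm hom}$ and $g^k_{\rm hom}$ given by \eqref{fk}--\eqref{gk}. The core identification is to prove
\[
\lim_{k\to+\infty} f^k_{\rm hom}(\om,\xi)=f_{\rm hom}(\om,\xi), \qquad \lim_{k\to+\infty} g^k_{\rm hom}(\om,\nu)=g_{\rm hom}(\om,\nu),
\]
and, as a by-product, existence of the $t$-limits defining $f_{\rm hom}$ and $g_{\rm hom}$ in \eqref{fl}--\eqref{gl}. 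The bounds $f^k_{\rm hom}\geq f_{\rm hom}$ and $g^k_{\rm hom}\geq g_{\rm hom}$ for each $t$ follow directly from the pointwise inequalities $f^k\geq f\,\chi_{\R^n\setminus K(\om)}$ and $g^k\geq g\,\chi_{\R^n\setminus K(\om)}$. For the reverse direction, taking a near-optimal competitor $v$ for the cell problem \eqref{fl} on $tQ$, Lemma \ref{Sob:ext-cell} produces a Sobolev extension $\tilde v\in W^{1,p}(tQ)$ whose gradient energy on $K(\om)\cap tQ$ is controlled by that of $v$ on $tQ\setminus K(\om)$; the additional contribution enters the $k$-cell problem weighted by $1/k$ and vanishes as $k\to+\infty$. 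For the surface counterpart a direct Sobolev extension is not admissible, since a $\{0,1\}$-valued competitor must remain a partition; Lemma \ref{Part:ext-cell} provides instead a partition extension, relying on the Congedo--Tamanini regularity of minimising partitions to ensure constancy on a sphere around each perforation and allowing a costless constant extension inside.

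Once the cell formulas are matched, the $\Gamma$-convergence of $E_\e(\om)$ is obtained by comparison with the perturbed functionals. For the liminf inequality: starting from $u_\e\to u$ in $L^1_{\rm loc}$ with bounded energy, one replaces $u_\e$ by its extension $\tilde u_\e$ from Part I, which still satisfies $E_\e(\om)(\tilde u_\e,A)=E_\e(\om)(u_\e,A)$ and converges to $u$ in $L^1(A)$; the uniform $SBV^p$ bound on $\tilde u_\e$ yields $E_\e^k(\om)(\tilde u_\e,A)\leq E_\e(\om)(\tilde u_\e,A)+C/k$ uniformly in $\e$. The $\Gamma$-liminf for $E_\e^k(\om)$ then gives
\[
\liminf_{\e\to 0}E_\e(\om)(u_\e,A)\geq E^k_{\rm hom}(\om)(u,A)-C/k,
\]
and letting $k\to+\infty$ (noting that $E^k_{\rm hom}$ decreases monotonically to $E_{\rm hom}$) concludes. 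For the limsup, any recovery sequence $u_\e^k$ for $E_\e^k(\om)$ satisfies $E_\e(\om)(u_\e^k,A)\leq E_\e^k(\om)(u_\e^k,A)$, and a diagonal in $k$ produces a recovery sequence for $E_\e(\om)$.

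\textbf{Part III (Properties) and main obstacle.} Measurability, the upper bounds, continuity of $f_{\rm hom}$ in $\xi$, and symmetry of $g_{\rm hom}$ in $\nu$ are inherited from the cell formulas and from $(f1)$--$(f3)$, $(g1)$--$(g3)$. The non-degenerate \emph{lower} bounds use Proposition \ref{property:d} decisively: positivity of the density $d(\om)$ gives $\mathcal{L}^n(tQ\setminus K(\om))/t^n$ bounded below, hence $f_{\rm hom}(\om,\xi)\geq\tilde c_0|\xi|^p$ via $(f1)$; a standard slicing argument, combined with the $2\delta$-separation of the perforations, shows that any admissible partition in $tQ^\nu$ has jump-set $(n-1)$-measure at least a positive constant times $t^{n-1}$ in $tQ^\nu\setminus K(\om)$, giving $g_{\rm hom}\geq\tilde c_0$. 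Independence of $\om$ in the ergodic case is a consequence of the subadditive ergodic theorem applied to the cell minima. The main obstacle will be the identification step for $g_{\rm hom}$, and in particular the partition extension of Lemma \ref{Part:ext-cell}: controlling the extended jump set without introducing new interfaces requires the Congedo--Tamanini regularity near each perforation, which will be the technically hardest part of the argument.
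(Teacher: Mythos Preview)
Your overall strategy matches the paper's: compactness via the $SBV$-extension theorem, $\Gamma$-convergence via the coercive perturbation $E_\e^k$, and identification of $f_{\rm hom}, g_{\rm hom}$ as limits of $f_{\rm hom}^k, g_{\rm hom}^k$ via the Sobolev and partition extension lemmas. Two technical points you gloss over are (a) truncation in the liminf step (the extension theorem requires an $L^\infty$ bound, which a generic $GSBV^p$ sequence need not satisfy; the paper truncates and then removes the truncation at the end), and (b) restoration of the boundary datum after extending a cell-problem competitor into the perforations (perforations intersecting $\partial (tQ)$ corrupt the boundary value $\ell_\xi$; the paper fixes this with a cut-off and a Poincar\'e inequality on a boundary layer, and treats inner versus boundary perforations separately in the surface case). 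Both are handled along the lines you would expect.

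The genuine gap is your argument for the \emph{lower} bound on $f_{\rm hom}$. Positivity of the density does \emph{not} by itself yield $f_{\rm hom}(\om,\xi)\geq \tilde c_0|\xi|^p$ via $(f1)$: a competitor $u\in W^{1,p}(tQ)$ with $u=\ell_\xi$ near $\partial(tQ)$ can concentrate arbitrarily large gradient inside $K(\om)\cap tQ$, so $\int_{tQ\setminus K(\om)}|\nabla u|^p$ is not bounded below by $c|\xi|^p t^n$ merely because $\mathcal L^n(tQ\setminus K(\om))\geq \rho\, t^n$. What is needed is precisely the extension property, not the density. The paper obtains the lower bound after establishing $\Gamma$-convergence: it takes a recovery sequence $u_\e$ for $\ell_\xi$ in $Q$, extends it via Theorem~\ref{stochastic-extension-lemma-SBV} to $T_\e^\om u_\e$ (which, by Proposition~\ref{property:d}, still converges to $\ell_\xi$), and then invokes the lower semicontinuity of $MS^p$ together with the interior estimate of Remark~\ref{stima:interna} to get $|\xi|^p\mathcal L^n(Q')\leq c\liminf_\e MS^p(u_\e, Q\setminus\e K(\om))\leq \tfrac{c}{c_1\wedge c_3}f_{\rm hom}(\om,\xi)$. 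The same argument, with $u_{0,1,\nu}$ in place of $\ell_\xi$, gives the lower bound for $g_{\rm hom}$; the slicing argument you sketch may be workable but is not what the paper does, and would require considerably more care than ``standard'' suggests.
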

The proof of Theorem \ref{statement-main-thm} will be broken up into three main steps which will be, respectively, the object of Proposition \ref{prop:comp}, Theorem \ref{en-density_vs}, and Theorem \ref{theo:stochMS} below.

\section{Preliminaries} 
\noindent In this short section we collect two known results which will be used in what follows. 
The first one, Theorem \ref{estfixdomintro} is an extension result for $GSBV$-functions. The second result, Lemma \ref{fracture-lemma}, is a regularity result for minimal partitions.

For $p>1$ and $n\geq 2$ we introduce the shorthand $MS^p$ for the $p$-Mumford-Shah functional, namely we write
$$
MS^p(u,A):= \int_{A}|\nabla u|^p dx + \HH(S_u\cap A),
$$
where $A\in \mathcal{A}$ and $u\in GSBV^p(A)$. Moreover, if $u\in GSBV^p(B)$, with $B\in \mathcal{A}$ and $\bar A\subset B$, we use the notation
$$
MS^p(u,\bar A):= MS^p(u,A) +  \HH(S_u\cap \partial A).
$$
We now recall \cite[Theorem 1.1]{CS}. 

\begin{theo}\label{estfixdomintro}
Let $p>1$, let $A,A' \subset \mathbb{R}^n$ be bounded open sets with Lipschitz boundary 
and assume that $A'$ is connected, $A' \subset A$ and $A\setminus A' \subset\subset A$.
Then there exists an extension operator 
$T: GSBV^p(A') \longrightarrow GSBV^p(A)$
and a constant $c=c(n,p,A,A') > 0$ such that

\begin{itemize}
\item $T u = u \quad \mathcal{L}^n\text{-a.e. in} \; A'$, 

\smallskip

\item $MS^p(T u,A) \leq c MS^p(u,A')$,
\end{itemize}
for every $u \in GSBV^p(A')$.
The constant $c$ is invariant under translations and homotheties. 

If in addition $u\in L^\infty(A')$, then $Tu \in SBV^p(A)\cap L^\infty(A)$, and $\|Tu\|_{L^\infty(A)} = \|u\|_{L^\infty(A')}$.
\end{theo}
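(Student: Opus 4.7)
The plan is to construct $T$ by reflecting $u$ locally across the Lipschitz boundary $\partial A'$ and patching the local pieces via a smooth partition of unity. Since $\partial A'$ is compact and Lipschitz, it can be covered by finitely many open balls $B_1,\ldots,B_N$, with $\bigcup_i B_i\subset\subset A$, in each of which, after a rigid change of coordinates, $A'\cap B_i=\{(x',x_n)\in B_i: x_n<\varphi_i(x')\}$ for some Lipschitz $\varphi_i$. The bi-Lipschitz reflection $R_i(x',x_n):=(x',2\varphi_i(x')-x_n)$ and the local extension
$$
T_iu(x) := \begin{cases} u(x), & x\in A'\cap B_i,\\ u(R_i(x)), & x\in B_i\setminus\overline{A'},\end{cases}
$$
satisfy $MS^p(T_iu,B_i)\le c_i\, MS^p(u,\widetilde A_i)$, where $\widetilde A_i:=(A'\cap B_i)\cup R_i^{-1}(B_i\setminus\overline{A'})\subset A'$ and $c_i$ depends only on $\operatorname{Lip}(R_i)$, by a standard change of variables using that $R_i$ is bi-Lipschitz (which controls both $|\nabla (u\circ R_i)|^p$ and $\mathcal{H}^{n-1}(S_{u\circ R_i})$).

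To globalise, add an interior chart $B_0\subset\subset A'$ so that $\{B_i\}_{i=0}^N$ covers $\overline{A'}$, choose a smooth partition of unity $\{\chi_i\}_{i=0}^N$ subordinate to $\{B_i\}$ with $T_0 u:=u$, and a smooth cutoff $\eta$ supported in $A$ with $\eta\equiv 1$ on $\overline{\bigcup_i\operatorname{supp}\chi_i}$. Define $Tu:=\eta\sum_i\chi_i T_iu$. Since $\chi_i$ and $\eta$ are smooth, $S_{Tu}\subset\bigcup_i S_{T_iu}$, yielding $\HH(S_{Tu}\cap A)\le c\,\HH(S_u\cap A')$. For the gradient, Leibniz gives
$$
\nabla Tu \;=\; \eta\sum_i\chi_i\nabla T_iu \;+\; \eta\sum_i(\nabla\chi_i)T_iu \;+\; (\nabla\eta)\sum_i\chi_i T_iu.
$$
The first sum is bounded by $c\int_{A'}|\nabla u|^p\,dx$. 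Since $\sum_i\nabla\chi_i=0$ on $\overline{A'}$, the other two sums can be rewritten with each $T_iu$ replaced by $T_iu-m$ for a common constant $m$ (e.g.\ the median of $u$ on $A'$). The De Giorgi--Carriero--Leaci Poincaré--Wirtinger inequality in $GSBV^p$ then provides an exceptional set $N\subset A'$ with $\HH(\partial N)\le C\HH(S_u\cap A')$ such that $\|u-m\|_{L^p(A'\setminus N)}\le C(\|\nabla u\|_{L^p(A')}+\HH(S_u\cap A'))$. Absorbing $\partial N$ into the jump set of $Tu$ removes the residual dependence on $\|u\|_{L^p}$ and produces the clean inequality $MS^p(Tu,A)\le c\,MS^p(u,A')$.

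The $L^\infty$ statement follows at once, since both reflection and convex combinations preserve $\|\cdot\|_\infty$. Invariance of $c$ under translations and homotheties holds because, if $(A,A')$ is replaced by $(tA+a,\,tA'+a)$ and $u$ by $v(y):=u((y-a)/t)$, then all charts, reflections, partitions of unity and cutoffs rescale and translate compatibly, and both sides of the target inequality scale componentwise by the same factors ($t^{n-p}$ on the volume piece and $t^{n-1}$ on the surface piece). The main obstacle is the absorption step: the classical Poincaré inequality breaks down in $GSBV^p$ because a small jump set may disconnect the domain into regions of very different values, and one has to invoke the De Giorgi--Carriero--Leaci refinement and carefully charge the resulting exceptional perimeter to the surface part of $MS^p(Tu)$, which is the heart of the proof.
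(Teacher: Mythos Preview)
The paper does not prove this theorem; it is quoted from \cite{CS} (Cagnetti--Scardia) as a preliminary result, and the paper only remarks that the key tool in the original proof is the De Giorgi--Carriero--Leaci density lower bound for Mumford--Shah minimisers. So there is no ``paper's own proof'' to compare with; what follows is an assessment of your sketch against the actual argument in \cite{CS}.

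Your outline captures the coarse shape of a reflection-plus-gluing strategy, but the step that you yourself flag as ``the heart of the proof'' does not close as written. The inequality you invoke,
\[
\|u-m\|_{L^p(A'\setminus N)}\le C\big(\|\nabla u\|_{L^p(A')}+\mathcal H^{n-1}(S_u\cap A')\big)
\quad\text{with}\quad \mathcal H^{n-1}(\partial N)\le C\,\mathcal H^{n-1}(S_u\cap A'),
\]
is not a standard De Giorgi--Carriero--Leaci statement. The usual $SBV$ Poincar\'e--Sobolev inequality controls the \emph{Lebesgue measure} of the exceptional set, not its perimeter, and gives $\|u-m\|_{L^{p}}\le C\|\nabla u\|_{L^p}$ on the good part (no surface term on the right). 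With your version, the Leibniz cross terms contribute $\|T_iu-m\|_{L^p}^p\le C\big(\|\nabla u\|_{L^p}+\mathcal H^{n-1}(S_u)\big)^p$, which is \emph{not} bounded by a constant times $MS^p(u,A')=\|\nabla u\|_{L^p}^p+\mathcal H^{n-1}(S_u)$ when $\mathcal H^{n-1}(S_u)$ is large and $p>1$. The same mixing of volume and surface terms also breaks your homothety argument: the two pieces of $MS^p$ scale by $t^{n-p}$ and $t^{n-1}$ respectively, so an estimate that trades one against the other cannot produce a scale-invariant constant. Finally, ``absorbing $\partial N$ into $S_{Tu}$'' must be done without altering $u$ on $A'$; you do not explain how this is arranged while still killing the bad $L^p$ mass that the reflections inherit from $N$.

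The argument in \cite{CS} avoids these issues by an additional reduction that you omit: before reflecting, one replaces $u$ in an intermediate layer by a local Mumford--Shah minimiser with the same boundary datum. The density lower bound for such minimisers (this is exactly what the paper's remark after the theorem points to) then guarantees enough regularity of the jump set to run a genuine small/large dichotomy: either $\mathcal H^{n-1}(S_u)$ is large compared to the local length scale, in which case a crude extension with an added interface is already controlled by $c\,\mathcal H^{n-1}(S_u)$, or it is small, in which case the minimiser's jump set can be isolated and the Sobolev Poincar\'e applies cleanly with exceptional set of controlled \emph{perimeter}. This dichotomy is what makes the two pieces of $MS^p$ decouple and yields the scale-invariant constant; your sketch needs this ingredient to be correct.
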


\begin{rem}
The result in \cite{CS} is stated and proven in $GSBV^p$ for the most classical case $p=2$, but the general case of $GSBV^p$ for $p>1$ follows immediately. In fact, a key 
tool of the proof in \cite{CS} is the density lower bound proved in \cite{DeGiorgi} (see also \cite{DMMS92}), which is actually valid for any $p>1$ (see for instance \cite[Theorem 7.21]{AFP}). 

\end{rem}

We now state a technical lemma (see \cite[Lemma 4.5]{TC1}, and see also \cite[Lemma 2.5]{TC2} for a more general version of the result) for (locally) minimal partitions.

\begin{lemma}
	Let $n \geqslant 2$ and $\tau \in (0,1]$ be fixed. There exists a constant $\gamma=\gamma(n,\tau)>0$ such that if $0<s\leq r$, and $u\in \mathcal{P}(B_{r,r+s})$ verifies the following hypotheses: 
	\begin{enumerate}
		\item[$(H1)$]$\displaystyle\mathcal{H}^{n-1}(S_{u} \cap B_{r,r+s}) 
		\leq \mathcal{H}^{n-1}(S_{v} \cap B_{r,r+s})$ 
		for every competitor $v \in \mathcal{P}(B_{r,r+s})$ satisfying  $\supp(u-v) \subset B_{r,r+s}$; 
		\smallskip
		\item[$(H2)$]$\displaystyle \mathcal{H}^{n-1}(S_{u} \cap B_{r,r+s}) \leq \gamma s^{n-1}$; 
	\end{enumerate}
	then for every $r_{0}$ and $s_{0}$ such that $r \leq r_{0} < r_{0} + s_{0} \leq r+s$ and $s_{0} \geq \tau s$, there exists a radius $\bar{r} \in (r + s_{0}/3, r_{0} + 2 s_{0}/3)$ with the property that
	\begin{equation*}
	S_{u} \cap \partial B_{\bar{r}} = \emptyset.
	\end{equation*}
	\label{fracture-lemma}
\end{lemma}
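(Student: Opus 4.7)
The plan is to argue by contradiction, relying on the classical density lower bound for locally area-minimising partitions. The intuition is that if a sphere $\partial B_{\bar r}$ sitting well inside the annulus $B_{r,r+s}$ meets $\overline{S_u}$, then the local minimality $(H1)$ forces a definite lower bound on $\HH(S_u)$ in a ball of radius comparable to $s_0$ around any intersection point, which becomes incompatible with $(H2)$ as soon as $\gamma$ is chosen small enough.

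The first step is to record the following standard De Giorgi-type fact: there exists a dimensional constant $\theta_0 = \theta_0(n) > 0$ such that whenever $u \in \Ps(U)$ satisfies a local minimality condition of type $(H1)$ in an open set $U \subset \R^n$, then for every $x \in \overline{S_u}$ and every $\rho \in (0,\mathrm{dist}(x,\partial U))$ one has
\begin{equation*}
\HH(S_u \cap B(x,\rho)) \geq \theta_0 \, \rho^{n-1}.
\end{equation*}
This is proved by comparing $u$ against a competitor in which one of the phases has been ``filled in'' inside $B(x,\rho)$: the extra surface energy of the competitor inside the ball is controlled by $\HH(\partial B(x,\rho)) = C(n)\rho^{n-1}$, and minimality then forces the reverse inequality up to a dimensional constant.

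Assuming this, fix $r_0$ and $s_0$ as in the statement, and suppose for contradiction that $S_u \cap \partial B_{\bar r} \neq \emptyset$ for every $\bar r \in (r_0+s_0/3,\, r_0 + 2s_0/3)$. Pick any such $\bar r$ together with a point $x \in S_u \cap \partial B_{\bar r}$; in particular $x \in \overline{S_u}$. Since $\bar r - r_0 > s_0/3$ and $r_0 + s_0 - \bar r > s_0/3$, the ball $B(x,s_0/3)$ is contained in $B_{r_0, r_0 + s_0} \subset B_{r,r+s}$. Applying the density lower bound with $\rho = s_0/3$, and then using $s_0 \geq \tau s$ from the hypothesis, one gets
\begin{equation*}
\HH(S_u \cap B_{r,r+s}) \geq \HH(S_u \cap B(x,s_0/3)) \geq \theta_0 \Big(\frac{s_0}{3}\Big)^{n-1} \geq \theta_0 \Big(\frac{\tau}{3}\Big)^{n-1} s^{n-1}.
\end{equation*}
Comparing with the upper bound in $(H2)$ gives $\gamma \geq \theta_0 (\tau/3)^{n-1}$, so choosing $\gamma = \gamma(n,\tau) := \tfrac{1}{2}\, \theta_0 (\tau/3)^{n-1}$ yields the contradiction and finishes the proof.

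The main obstacle I anticipate is the precise justification of the density lower bound in this partition setting, where competitors are required only to differ from $u$ on a set compactly contained in $B_{r,r+s}$. The subtlety is that naively filling in a phase inside $B(x,\rho)$ may create new interfaces on $\partial B(x,\rho)$ wherever two phases of the partition meet the sphere from opposite sides; this is classically circumvented by a Fubini-type selection of a good slicing radius $\rho' \in (\rho/2,\rho)$ on which the partition is sufficiently well-behaved before performing the comparison. This selection procedure is essentially the technical heart of the original argument in \cite{TC1} and accounts for the somewhat asymmetric range $(r_0+s_0/3,\, r_0+2s_0/3)$ in the conclusion.
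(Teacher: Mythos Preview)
The paper does not prove this lemma; it is quoted as a preliminary result and attributed to Congedo and Tamanini \cite[Lemma~4.5]{TC1} (with \cite[Lemma~2.5]{TC2} cited for a more general version). Your strategy---invoke the density lower bound for locally perimeter-minimising partitions at a point of $\overline{S_u}$ lying on a sphere in the middle third of the sub-annulus, and derive a contradiction with $(H2)$ once $\gamma$ is taken small enough---is precisely the mechanism behind the proof in \cite{TC1}, so your proposal is aligned with the original source. The contradiction step is clean and correct: once one has $\HH(S_u\cap B(x,\rho))\ge \theta_0\rho^{n-1}$ for $x\in\overline{S_u}$ and $\rho<\mathrm{dist}(x,\partial B_{r,r+s})$, the choice $\rho=s_0/3$ together with $s_0\ge\tau s$ forces $\HH(S_u\cap B_{r,r+s})\ge \theta_0(\tau/3)^{n-1}s^{n-1}$, contradicting $(H2)$ for $\gamma<\theta_0(\tau/3)^{n-1}$.

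One point worth flagging: your one-line heuristic for the density lower bound (``compare with a competitor in which one phase is filled in inside $B(x,\rho)$; the extra surface is at most $\HH(\partial B(x,\rho))$, and minimality forces the reverse inequality'') actually delivers the density \emph{upper} bound $\HH(S_u\cap B(x,\rho))\le C(n)\rho^{n-1}$, not the lower one. The lower bound genuinely requires the De~Giorgi-type iteration: one studies $m(\rho):=\min\{|\{u=1\}\cap B(x,\rho)|,\,|\{u=0\}\cap B(x,\rho)|\}$, combines minimality (which bounds $\HH(S_u\cap B(x,\rho))$ by the slice measure $\HH(\{u=1\}\cap\partial B(x,\rho))$ for a.e.\ $\rho$, via coarea) with the relative isoperimetric inequality, and integrates the resulting differential inequality to get $m(\rho)\ge c\rho^n$; the perimeter lower bound then follows. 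You do flag this step as the technical heart of \cite{TC1}, so the gap is acknowledged rather than hidden---but the sketch you give is pointing the wrong way and should be corrected.
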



\section{Extension results and compactness}
\noindent In this section we prove a compactness result for sequences $(u_\e)$ satisfying the bound 
\begin{equation}\label{venerdi}
E_\e(\om)(u_\e,A) +\|u_\e\|_{L^\infty(A)}\leq C \; \text{ for every}\; \e>0,
\end{equation}
for a constant $C>0$ independent of $\e>0$, where $A\in \mathcal{A}$ and $E_\e$ is defined as in \eqref{def:Eneps}, and $\om \in \Om$.

\smallskip

By definition of the functionals $E_\e(\om)$, the bound in \eqref{venerdi} does not provide any information on the $BV$-norm of $u_\e$ in $A\cap \e K(\om)$. To gain the desired bound, we show that $(u_\e)$ can be actually replaced by a sequence $(\tilde u_\e)\subset SBV^p(A)$ satisfying the two following properties: 
\begin{equation}\label{two-prop}
\tilde u_\e \equiv u_\e \quad \text{in}\; A\setminus \e K(\om) \quad \text{and}\quad  \sup_{\e}\|\tilde u_\e\|_{BV(A)}<+\infty.
\end{equation} 
In particular, $\tilde u_\e$ is energetically equivalent to $u_\e$. 
To prove the existence of such a sequence, we resort to a new extension result for functions defined on a perforated domain without assuming any periodicity on the distribution of the perforations (cf. Cagnetti and Scardia \cite{CS} for the case of periodically distributed perforations). 

\subsection{Extension}\label{sub:ext} The main result of this subsection is a $GSBV$-extension result from $A\setminus \e K(\om)$ to $A$ (cf. Theorem \ref{stochastic-extension-lemma-SBV}). 
Since this result is proven for $\om\in \Om$ fixed, in what follows we omit the dependence of the set $K(\om)$ on the random parameter $\om$. 
Hence below $K$ denotes any subset of $\R^n$ satisfying the two properties \eqref{def:P} and \eqref{separation-K} (cf. Definition \ref{ass:random-domain}).
	
Loosely speaking, to prove the desired $GSBV$-extension result we would like to apply Theorem \ref{estfixdomintro} in a $\delta$-neighbourhood of each component $B(\theta_{i},r_i)$ of $K$ (which are pairwise disjoint by assumption \eqref{separation-K}). If we did it naively, however, we could have for each $B(\theta_{i},r_i)$ a different extension constant. 
Lemma \ref{t:ext-cell} below ensures that the extension constant can be actually taken to be independent of $\theta_{i}$ and $r_i$.

\begin{lemma}[$GSBV$-extension in an annulus]\label{t:ext-cell}
Let $n\geq 2$ and $p>1$; let $\delta >0$ and $r_\ast>\delta$ be fixed. Let $\theta\in \R^n$ and $0<r<r_\ast$; then there exist an extension operator $T_{\theta, r} : GSBV^p(B_{r,r+\delta}(\theta)) \rightarrow GSBV^p(B(\theta,r+\delta))$ and a constant $c=c(n,p,\delta, r_\ast)>0$ such that
	\begin{equation*}
	\begin{split}
	T_{\theta,r} u = u \ \mathcal{L}^n\text{-a.e. in } \ B_{r,r+\delta}(\theta),
	\\
	MS^p(T_{\theta,r} u, B(\theta,r+\delta))  \leq c \, MS^p\big(u, B_{r,r+\delta}(\theta)\big)
	\end{split}
		\end{equation*}
for every $u \in GSBV^p(B_{r,r+\delta}(\theta))$. The constant $c$ is invariant under translations and homotheties. If in addition $u\in L^\infty(B_{r,r+\delta}(\theta))$, then $T_{\theta,r}u \in  SBV^p(B(\theta,r+\delta))\cap L^\infty(B(\theta,r+\delta))$, and $\|T_{\theta,r}u\|_{L^\infty(B(\theta,r+\delta))} = \|u\|_{L^\infty(B_{r,r+\delta}(\theta))}$.
\end{lemma}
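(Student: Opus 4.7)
By the translation invariance of the extension constant in Theorem~\ref{estfixdomintro}, I take $\theta=0$ and set $R:=r+\delta$. The plan is to construct $T_{0,r}u$ by iterating Theorem~\ref{estfixdomintro} along a dyadic decomposition of the hole $B(0,r)$ into concentric shells whose aspect ratio is fixed and depends only on $\delta$. The homothety invariance of the per-step constant will make each iteration contribute a universal factor $1+c_0$, and the number of iterations being bounded by $O(\log_\lambda(r_\ast/\delta))$ will compound these factors to the uniform $c=c(n,p,\delta,r_\ast)$ required.

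Concretely, fix $\lambda>1$ (say $\lambda=2$), set $\rho_k:=r\lambda^{-k}$ for $k=0,\dots,N$ where $N$ is the least integer with $\rho_N\le\delta$ (so $N\le\log_\lambda(r_\ast/\delta)+1$), and put $V_k:=B_{\rho_k,R}$. Starting from $u_0:=u\in GSBV^p(V_0)=GSBV^p(B_{r,R})$, I would inductively build $u_{k+1}\in GSBV^p(V_{k+1})$ by applying Theorem~\ref{estfixdomintro} to the pair
\[
A:=B(0,\rho_{k-1}),\qquad A':=B_{\rho_k,\rho_{k-1}}\subset V_k,
\]
for which $\overline{B(0,\rho_k)}\subset\subset A$ is trivial. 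Under the rescaling $x\mapsto x/\rho_{k-1}$ this pair becomes the fixed pair $(B(0,1),B_{1/\lambda,1})$, so the extension constant of the resulting operator $T_k$ is some universal $c_0=c_0(n,p,\lambda)$. I then set $u_{k+1}:=u_k$ on $V_k$ and $u_{k+1}:=T_ku_k$ on the new shell $B_{\rho_{k+1},\rho_k}$. Since $T_ku_k=u_k$ on $A'\subset V_k$, the traces of $u_{k+1}$ from the two sides of $\{|x|=\rho_k\}$ match outside $S_{T_ku_k}$, so no spurious jump is created and $u_{k+1}\in GSBV^p(V_{k+1})$ with
\[
MS^p(u_{k+1},V_{k+1})\le(1+c_0)\,MS^p(u_k,V_k).
\]
The base step $k=0$ is handled analogously with $(A,A')=(B(0,R),V_0)$ (using $\overline{B(0,r)}\subset\subset A$): one applies Theorem~\ref{estfixdomintro} and keeps the resulting extension on the shell $B_{\rho_1,r}$ to define $u_1$ on $V_1$. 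Compounding the $N+1$ per-step estimates produces $MS^p(T_{0,r}u,B(0,R))\le c\,MS^p(u,B_{r,R})$ with $c$ depending only on $n,p,\delta,r_\ast$.

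The $L^\infty$ clause of Theorem~\ref{estfixdomintro} is preserved at every step and the gluing never increases the sup-norm, so whenever $u\in L^\infty(B_{r,R})$ one has $T_{0,r}u\in L^\infty(B(0,R))$ with the same $L^\infty$-norm; this yields the additional $SBV^p\cap L^\infty$ conclusion.

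The main obstacle I foresee is the base step, where the pair $(B(0,R),V_0)$ has an $r$-dependent shape $r/R\in[0,r_\ast/(r_\ast+\delta))$; Theorem~\ref{estfixdomintro} alone gives a constant that depends on this shape, and its uniform boundedness in $r\in(0,r_\ast)$ needs to be justified. Since the shape parameter ranges over a compact subset of $[0,1)$, the bound follows either from continuous dependence of the Cagnetti--Scardia construction on the shape, or, as the introduction suggests, by further sub-dyadically decomposing the base step into finitely many fixed-shape sub-steps (mirroring the main iteration). All other steps $k\ge 1$ enjoy fixed-shape pairs by construction, so the difficulty is concentrated at this single location.
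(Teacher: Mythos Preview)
Your dyadic-iteration strategy is exactly the paper's, and you have correctly isolated the one genuine obstacle: with $\lambda=2$ the base step uses the pair $(B(0,R),B_{r,R})$, whose shape parameter $r/R$ varies with $r$, so Theorem~\ref{estfixdomintro} alone does not give a uniform constant there. Your option (1) (continuous dependence of the Cagnetti--Scardia constant) would need a separate argument and is not what the paper does.

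The paper dissolves the obstacle by choosing the dyadic ratio to be $\lambda=1+\delta/r_\ast$ rather than $2$, and by splitting off the case $r<\delta$. The point of this $\lambda$ is that for $r\ge\delta$ one has $r\lambda=r+r\delta/r_\ast\le r+\delta=R$, so the \emph{first} fixed-shape annulus $B_{r,r\lambda}$ already sits inside the given annulus $B_{r,R}$; the iteration can therefore start directly from fixed-shape data and no variable-shape base step is ever needed. After $N_\delta=\lfloor\log_\lambda(r_\ast/\delta)\rfloor+1$ inward steps the inner radius $r_\delta=r\lambda^{-N_\delta}$ drops below $\delta$, and the remaining ball $B(0,r_\delta)$ is filled by a single application of Theorem~\ref{estfixdomintro} to the fixed-shape pair $(B(0,2r_\delta),B_{r_\delta,2r_\delta})$, which is legitimate precisely because $r_\delta<\delta$ forces $B_{r_\delta,2r_\delta}\subset B_{r_\delta,r_\delta+\delta}$. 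The case $r<\delta$ is handled ab initio by this same device. Your option~(2) is, in substance, exactly this; the paper's choice of $\lambda$ is the clean implementation you were pointing toward.
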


\begin{proof}
Let $u\in GSBV^p(B_{r,r+\delta}(\theta))$. We treat the cases $r<\delta$ and $r\geq \delta$ separately. 

\medskip

\noindent
\textbf{Case 1: $r<\delta$.} Note that in this case $B_{r,2r}(\theta)\subset B_{r,r+\delta}(\theta)$.

Let $v:=u_{|_{B_{r,2r}(\theta)}}$. By applying Theorem \ref{estfixdomintro} with $A'=B_{r,2r}(\theta)$ and $A=B(\theta,2r)$, we deduce the existence of a constant $c=c(n,p)>0$ (independent of $\theta$ and $r$) and a function $w \in GSBV^p(B(\theta,2r))$ satisfying $
w = v =u$ $\mathcal L^n$-a.e. in $B_{r,2r}(\theta)$ and		
\begin{equation}\label{s:MS-2r-2}
MS^p(w, B(\theta,2r))  \leq c\, MS^p(v, B_{r,2r}(\theta))=c\, MS^p(u, B_{r,2r}(\theta)) \leq c\, MS^p(u, B_{r,r+\delta}(\theta)).
\end{equation}
We now define the function $\tilde u$ in $B(\theta,r+\delta)$ as follows:

\begin{equation}\label{ext:1}
		\tilde u :=
		\begin{cases}
			u &\mbox{ in } B_{r,r+\delta}(\theta), \\
			w_{|\overline B(\theta,r)} &\mbox{ in } \overline B(\theta,r).
		\end{cases}
	\end{equation}
Clearly, $\tilde u\in GSBV^p(B(\theta,r+\delta))$, $\tilde u = u$ in $B_{r,r+\delta}(\theta)$, and 
	\begin{align*}
		MS^p(\tilde u,B(\theta,r+\delta)) &= MS^p(u, B_{r,r+\delta}(\theta)) + MS^p(w, \overline B(\theta,r))\\
		& \leq MS^p(u, B_{r,r+\delta}(\theta)) + MS^p(w, B(\theta,2r))\\
		&\leq (1+c)MS^p(u, B_{r,r+\delta}(\theta)),
	\end{align*}
where $c>0$ is the same constant as in \eqref{s:MS-2r-2}.	

\noindent
The desired extension operator $T_{\theta,r} \colon  GSBV^p(B_{r,r+\delta}(\theta)) \longrightarrow GSBV^p(B(\theta,r+\delta))$ is then the one associating to any $u\in GSBV^p(B_{r,r+\delta}(\theta))$ the function $\tilde u$ defined by \eqref{ext:1}.

\medskip

\noindent
\textbf{Case 2: $r\geq \delta$.} Since $r < r_\ast$, we have that
	\begin{equation*}
		B(\theta,r(1+ \delta/r_\ast)) \subset B(\theta,r+\delta).
	\end{equation*} 
We divide the proof into two steps. In a first step we extend $u$ from $B_{r,r+\delta}(\theta)$ to $B_{r_\delta,r+\delta}(\theta)$, for some suitably defined $r_\delta<\delta$ (see \eqref{def:Ndelta}-\eqref{def:rdelta}). Then, in a second step, since $r_\delta<\delta$, we can argue as in Case 1 and conclude. 

\smallskip

\textit{Step 1: Dyadic extensions.}  For $i\in \N$ we define the open dyadic annuli 
$$
A_i:= B(\theta,r(1+ \delta/r_\ast)^{2-i})\setminus \overline B(\theta,r(1+ \delta/r_\ast)^{1-i}),
$$
and their semi-open versions
$$
A'_i:=  \overline B(\theta,r(1+ \delta/r_\ast)^{2-i})\setminus  \overline B(\theta,r(1+ \delta/r_\ast)^{1-i}).
$$
Note indeed that for every $i$ the ratio between the outer and inner radii of $A_i$ is constant and equal to $1+\frac{\delta}{r_\ast}$.  
Let $N_\delta \in \N$ be given by
\begin{equation}\label{def:Ndelta}
N_\delta:= \bigg\lfloor\frac{\ln\left(\frac{r_\ast}{\delta}\right)}{\ln(1+\delta/r_\ast)}\bigg\rfloor + 1.
\end{equation}
Taking into account that $r<r_\ast$ it is easy to check that 
\begin{equation}\label{def:rdelta}
r_\delta:=r(1+\delta/r_\ast)^{-N_\delta}<\delta;
\end{equation}
in particular, since in this case $\delta \leq r$, this implies that $B(\theta,r_\delta) \subset B(\theta,r)$. 

We now extend $u$ from $A_1=B(\theta,r(1+ \delta/r_\ast)) \setminus \overline B(\theta,r)$ to $B(\theta,r(1+ \delta/r_\ast)) \setminus \overline B(\theta,r_\delta)$ iteratively. To this end, for $i=1$ we set $u_1:=u_{|A_1}$; then we define the function $v_1:=T_1u_1 \in GSBV^p(A_1\cup A'_2)$, where $T_1$ denotes the extension operator from $GSBV^p(A_1)$ to $GSBV^p(A_1 \cup A'_2)$ provided by Theorem \ref{estfixdomintro}. Hence, $v_1=u_1$ a.e. in $A_1$ and $MS^p(v_1,A_1 \cup A'_2)\leq c\, MS^p(u_1,A_1)$, where the constant $c>0$ depends only on $n$, $p$, $\delta$ and $r_\ast$.  

For $i=2$ we set $u_2:={v_1}_{|A_2}$ and we define $v_2:=T_2 u_2 \in GSBV^p(A_2\cup A'_3)$, where $T_2$ denotes the extension operator from $GSBV^p(A_2)$ to $GSBV^p(A_2 \cup A'_3)$ provided again by Theorem \ref{estfixdomintro}. Therefore  
$v_2=u_2$ a.e. in $A_2$ and $MS^p(v_2,A_2 \cup A'_3)\leq c\, MS^p(u_2,A_2)$, where the constant $c>0$ is the same as in the step $i=1$. Thus we have
\begin{align*}
MS^p(v_2,A_2 \cup A'_3) &\leq c MS^p(u_2,A_2) \leq c MS^p(u_2,A_1 \cup A'_2) \\
&\leq c^2 MS^p(u_1,A_1)=c^2 MS^p(u,A_1).
\end{align*}
Then, by repeating the same procedure as above for every $i=1,\ldots, N_\delta$, we eventually construct $N_\delta$ functions $v_i:=T_i u_i \in GSBV^p(A_i \cup A'_{i+1})$, where $T_i$ denotes the extension operator from $GSBV^p(A_i)$ to $GSBV^p(A_i \cup A'_{i+1})$. Hence, for every $i=1,\ldots, N_\delta$, we have that
$v_i=u_i$ a.e. in $A_i$ and 
$$
MS^p(v_i,A_i\cup A'_{i+1}) \leq c^i MS^p(u,A_1). 
$$
Thus recalling that $A_1:=B(\theta,r(1+ \delta/r_\ast)) \setminus \overline B(\theta,r)$ we get 
\begin{equation}\label{stima-iterata}
MS^p(v_i,A_i\cup A'_{i+1}) \leq c^i MS^p(u,B(\theta,r(1+ \delta/r_\ast)) \setminus \overline B(\theta,r)) \leq c^i MS^p(u,B_{r,r+\delta}(\theta)) 
\end{equation}
for every $i=1,\ldots, N_\delta$.

We now define the function $\hat u \in GSBV^p(B_{r_\delta,r+\delta}(\theta))$ as  
$$
\hat u:= \begin{cases}
u & \text{in}\; B_{r,r+\delta}(\theta)
\cr
v_i & \text{in}\; A'_{i+1}, \; \text{for}\; i=1,\ldots, N_\delta.
\end{cases}
$$
By the definition of $\hat u$ and by \eqref{stima-iterata} we get
\begin{align*}
MS^p(\hat u, B_{r_\delta, r+\delta}(\theta))
&\leq MS^p(u,B(\theta,r+\delta)\setminus \overline B(\theta,r(1+ \delta/r_\ast)))+ \sum_{i=1}^{N_\delta} MS^p(v_i, A_i\cup A'_{i+1})\\
&\leq MS^p(u,B_{r,r+\delta}(\theta))+  \sum_{i=1}^{N_\delta}c^i MS^p(u, B_{r,r+\delta}(\theta))\\
& = \underbrace{\left(1+ \sum_{i=1}^{N_\delta}c^i\right)}_{=: \hat c(n,p,\delta,r_\ast)}MS^p(u, B_{r,r+\delta}(\theta)).
\end{align*}

\smallskip

\textit{Step 2: Extension to $B(\theta,r+\delta)$.} To conclude we need to extend $\hat u$ from $B_{r_\delta,r+\delta}(\theta)$ to $B(\theta,r+\delta)$. Since by construction $r_\delta< \delta<r$, we can follow the same procedure as in Case 1 to extend (the restriction of) $\hat u$ from $B_{r_\delta,r_\delta+\delta}(\theta)$ to $B(\theta,r_\delta+\delta)$. That is, we consider the extended function $\tilde u$ as in \eqref{ext:1} (with $r_\delta$ instead of $r$).  Then $\tilde u \in GSBV^p(B(\theta,r_\delta+\delta))$,  $\tilde u = \hat u$ $\mathcal L^n$-a.e. $B_{r_\delta,r_\delta+\delta}(\theta)$, and
\begin{align*}
		MS^p(\tilde u,B(\theta,r_\delta+\delta)) &\leq (1+c)MS^p(\hat u, B_{r_\delta,r_\delta+\delta}(\theta)) \leq (1+c)MS^p(\hat u, B_{r_\delta,r+\delta}(\theta)).
\end{align*}
The desired extension operator $T_{\theta,r} \colon  GSBV^p(B_{r,r+\delta}(\theta)) \longrightarrow GSBV^p(B(\theta,r+\delta))$ is then the operator associating to any $u\in GSBV^p(B_{r,r+\delta}(\theta))$ the function $\overline u: B(\theta,r+\delta) \to \R$ defined as
$$
\overline{u}:= 
\begin{cases}
\hat u & \text{in}\; B_{r_\delta,r+\delta}(\theta)
\cr
\tilde u & \text{in}\; \overline B(\theta,r_\delta),
\end{cases}
$$
which satisfies 
\begin{align*}
MS^p(\overline u,B(\theta,r+\delta)) &\leq MS^p(\hat u,B_{r_\delta,r+\delta}(\theta)) + MS^p(\tilde u,\overline B(\theta,r_\delta))
\\
&\leq MS^p(\hat u,B_{r_\delta,r+\delta}(\theta)) + MS^p(\tilde u,B(\theta,r_\delta+\delta)) \\
&\leq (2+c) MS^p(\hat u,B_{r_\delta,r+\delta}(\theta)) \leq (2+c)c(n,p,\delta,r_\ast) MS^p(u, B_{r,r+\delta}(\theta)).
\end{align*}

\end{proof}

\noindent We now make use of Lemma \ref{t:ext-cell} to prove the desired $GSBV$-extension result from $A\setminus \e K$ to $A$.  

\begin{theo}[$GSBV$-extension in $A\setminus \e K$]
Let $A \in \mathcal A$, let $K\subset \R^n$ satisfy \eqref{def:P} and \eqref{separation-K}, and let $\e>0$. Let $p>1$. 
Then there exists an extension operator $T_\e : GSBV^p(A \setminus \e K) \longrightarrow GSBV^p(A)$ and a constant $c=c(n,p,\delta,r_\ast)>0$ such that 

\smallskip

$(E1)$\; $T_\e u = u\; \ \mathcal{L}^n\text{-a.e. in } \;  A \setminus \e K,$

\smallskip
$(E2)$\; $MS^p(T_\e u, A) \leq c\big(MS^p(u, A\setminus \e K)+\HH(\partial A)\big)$

\smallskip

\noindent for every $u \in GSBV^p(A \setminus \e K)$. Moreover, the constant $c$ is invariant under homotheties and translations. 

If in addition $u\in L^\infty(A \setminus \e K)$, then 

\smallskip

$(E3)$\; $T_\e u \in L^\infty(A)$, and $\|T_\e u\|_{L^\infty(A)} = \|u\|_{L^\infty(A \setminus \e K)}$.
\label{stochastic-extension-lemma-SBV}
\end{theo}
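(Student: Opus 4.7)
The plan is to apply Lemma~\ref{t:ext-cell} perforation-by-perforation on those holes whose $\delta$-fattening is contained in $A$, and to deal with the remaining perforations near $\partial A$ by the crude choice $T_\e u = 0$ inside the hole, whose total energetic cost will be absorbed into the additive $\HH(\partial A)$ term in $(E2)$. The fact that the constant in Lemma~\ref{t:ext-cell} is invariant under homotheties is what gives a bound uniform in $\e$ and in the (random) centres and radii.

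More precisely, I would split $\mathcal{I}$ into the \emph{interior} and \emph{boundary} index sets
\begin{align*}
\mathcal{I}_\e^{\mathrm{int}} & := \bigl\{i\in\mathcal{I}:\,\e \overline{B}(\theta_i,r_i+\delta)\subset A\bigr\},\\
\mathcal{I}_\e^{\mathrm{bdy}} & := \bigl\{i\in\mathcal{I}:\,\e B(\theta_i,r_i)\cap A\neq\emptyset\bigr\}\setminus \mathcal{I}_\e^{\mathrm{int}},
\end{align*}
noting that by \eqref{separation-K} the balls $\e B(\theta_i,r_i+\delta)$, and hence the annuli $\e B_{r_i,r_i+\delta}(\theta_i)$, are pairwise disjoint and (for $i\in\mathcal I_\e^{\mathrm{int}}$) contained in $A\setminus\e K$. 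For each $i\in\mathcal{I}_\e^{\mathrm{int}}$ the rescaled version of Lemma~\ref{t:ext-cell} applied to $u_{|\e B_{r_i,r_i+\delta}(\theta_i)}$ produces an extension $v_i\in GSBV^p(\e B(\theta_i,r_i+\delta))$ with $v_i=u$ on the annulus and
$$
MS^p(v_i,\e B(\theta_i,r_i+\delta))\leq c\,MS^p(u,\e B_{r_i,r_i+\delta}(\theta_i)),
$$
where $c=c(n,p,\delta,r_\ast)$ is \emph{the same} constant for every $i$ and every $\e$, thanks to scale-invariance. I then set
$$
T_\e u := \begin{cases} u & \text{in } A\setminus \e K, \\ v_i & \text{in } \e B(\theta_i,r_i),\ i\in\mathcal{I}_\e^{\mathrm{int}}, \\ 0 & \text{in } \e B(\theta_i,r_i)\cap A,\ i\in\mathcal{I}_\e^{\mathrm{bdy}}. \end{cases}
$$
Since each $v_i$ matches $u$ on its bounding annulus, no spurious jump appears on the spheres $\partial(\e B(\theta_i,r_i))$ for $i\in\mathcal{I}_\e^{\mathrm{int}}$, and the pairwise disjointness of the annuli gives
$$
MS^p(T_\e u,A)\leq (1+c)\,MS^p(u,A\setminus \e K)+\sum_{i\in\mathcal{I}_\e^{\mathrm{bdy}}}\HH\bigl(\partial(\e B(\theta_i,r_i))\cap A\bigr).
$$

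The main obstacle is bounding the boundary sum by $\HH(\partial A)$. To this end, every $i\in\mathcal{I}_\e^{\mathrm{bdy}}$ has centre $\e\theta_i$ within distance $\e(r_\ast+\delta)$ of $\partial A$, while by \eqref{separation-K} the balls $\e B(\theta_i,\delta)$ are pairwise disjoint and all contained in the tubular neighbourhood $\{x:\mathrm{dist}(x,\partial A)\leq \e(r_\ast+2\delta)\}$. A Lipschitz tubular-neighbourhood estimate (valid once $\e$ is small in a way depending only on $A$, $r_\ast$, $\delta$) then yields
$$
|\mathcal{I}_\e^{\mathrm{bdy}}|\,c_n(\e\delta)^n\leq C\,\HH(\partial A)\,\e(r_\ast+2\delta),
$$
so, since each sphere contributes at most $c_n(\e r_\ast)^{n-1}$,
$$
\sum_{i\in\mathcal{I}_\e^{\mathrm{bdy}}}\HH\bigl(\partial(\e B(\theta_i,r_i))\cap A\bigr)\leq C(n,\delta,r_\ast)\,\HH(\partial A).
$$
Combining this with the previous display proves $(E2)$ upon enlarging $c$. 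Both the Lemma~\ref{t:ext-cell} constant and the tubular estimate being scale- and translation-invariant yields the claimed invariance of $c$. Finally, property $(E3)$ is immediate: Lemma~\ref{t:ext-cell} preserves the $L^\infty$ bound on each $v_i$, the value $0$ imposed on boundary holes lies in $[-\|u\|_\infty,\|u\|_\infty]$, and $T_\e u=u$ on $A\setminus\e K$, hence $\|T_\e u\|_{L^\infty(A)}=\|u\|_{L^\infty(A\setminus\e K)}$.
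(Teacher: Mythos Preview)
Your treatment of the \emph{interior} perforations is exactly the paper's, and the overall plan is sound.  The difference lies entirely in how you handle the \emph{boundary} perforations.  You fill each boundary hole with $0$ and then bound the total perimeter so created via a packing/tubular-neighbourhood count.  The paper takes a slicker route that avoids this count altogether: it first extends $u$ by $0$ to all of $\R^n\setminus\e K$ (paying a single additive jump of size $\HH(\partial A)$ across $\partial A$), and then applies Lemma~\ref{t:ext-cell} uniformly to \emph{every} perforation $\e B(\theta_j,r_j)$ meeting $A$, boundary ones included, since after the trivial extension each annulus $\e B_{r_j,r_j+\delta}(\theta_j)$ lies entirely in the domain of the extended function.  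Restricting back to $A$ at the end gives $(E2)$ with constant $c(n,p,\delta,r_\ast)+1$, for \emph{every} $\e>0$ and with no reference whatsoever to the geometry of $\partial A$.

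Your route, as written, does not quite deliver the statement.  The estimate $\mathcal L^n\bigl(\{x:\mathrm{dist}(x,\partial A)\le t\}\bigr)\le C\,t\,\HH(\partial A)$ that you invoke carries a constant $C$ and a smallness threshold on $t=\e(r_\ast+2\delta)$ which in general depend on the Lipschitz character of $\partial A$; you acknowledge the threshold yourself.  Hence your final constant is $c(n,p,\delta,r_\ast,A)$ rather than the $A$-independent $c(n,p,\delta,r_\ast)$ demanded by the theorem, and the argument only covers small $\e$.  Both defects evaporate with the extend-by-zero trick.

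For $(E3)$ your argument is fine (and matches the paper's).
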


\begin{proof}
Let $\bar{u}: \R^n\setminus \e K\to \R$ denote the trivial extension of $u$ to $\R^n \setminus \e K$\ie
\begin{equation*}
\bar{u}:=
\begin{cases}
u \quad & \mbox{in } A\setminus \e K\\
0 \quad & \mbox{in }  (\R^n\setminus A)\setminus \e K.
\end{cases}
\end{equation*}
Then $\bar{u} = u$ a.e. in $A\setminus \e K$, and 
\begin{equation}\label{tlMf}
MS^p(\bar{u}, \R^n\setminus \e K) \leq MS^p(u,A\setminus \e K) + \mathcal{H}^{n-1}(\partial A). 
\end{equation}
Let $\mathcal{I}_{\varepsilon}$ be the set of indices $j \in \I_\e$ such that  $\e \overline B(\theta_j,r_j)$ intersects $A$. For $j\in \mathcal{I}_\e$ we use the shorthand $A_j$ for the open annulus $B_{r_j, r_j+\delta}(\theta_{j})$, 
and we denote with $T_{j,\e}: GSBV^p\big(\e A_j\big) \longrightarrow GSBV^p(\e B(\theta_j,r_j+\delta))$ the extension operator provided by Lemma \ref{t:ext-cell}. 
Finally, we define the function  $\tilde u_{\varepsilon}: \R^n\to \R$ as 
\begin{equation*}
\tilde u_{\e}:= 
\begin{cases}
\Big(T_{j,\varepsilon}\big(\bar{u}_{|\e A_j}\big)\Big) &\quad \textrm{in } \e B(\theta_j,r_j+\delta), \, j\in \mathcal{I}_\e,\\
\bar u &\quad \textrm{otherwise}. 
\end{cases}
\end{equation*}
Clearly $\tilde u_{\varepsilon} \in GSBV^p(A\cup \bigcup_{j\in \mathcal{I}_\e} \e B(\theta_j,r_j+\delta))$. Moreover, 
\begin{align*}
MS^p\bigg(\tilde u_\e, A\cup \bigcup_{j\in \mathcal{I}_\e} \e B(\theta_j,r_j+\delta)\bigg) 
&\leq \sum_{j\in \mathcal{I}_\e} MS^p\left(T_{j,\varepsilon}\big(\bar{u}_{|\e A_j}\big), \e B(\theta_j,r_j+\delta)\right) + MS^p(\bar u,\R^n\setminus \e K)\\
&\leq c(n,p,\delta,r_\ast) \sum_{j=1}^{N_\e} MS^p\left(\bar{u}, \e A_j\right) + MS^p(\bar u,\R^n\setminus \e K)\\
& \leq (c(n,p,\delta,r_\ast)+1) \left(MS^p(u,A\setminus \e K) + \mathcal{H}^{n-1}(\partial A)\right),
\end{align*}
where we have used  \eqref{tlMf}, and the fact that, since for each of the operators $T_{j,\e}$ the constant provided by Lemma \ref{t:ext-cell}  
is invariant under translations and homotheties, it is in particular independent of $j$ and $\varepsilon$. 
Finally, the claim follows by defining  $T_\varepsilon u:= \tilde u^{\varepsilon}_{|A}$.
\end{proof}

\begin{rem}\label{stima:interna}
A careful inspection of the proof of Theorem~\ref{stochastic-extension-lemma-SBV} shows that, as in \cite{ACDP}, one can obtain the following estimate, alternative to $(E2)$:
\begin{equation*}
MS^p(T_\e u, A') \leq c MS^p(u, A\setminus \e K), \quad \forall\, A'\in \mathcal{A}, A'\subset \subset A.
\end{equation*}
Indeed, the additional boundary contribution in $(E2)$ is due to the possible presence of perforations that are cut by $\partial A$, and for which the extension result Lemma \ref{t:ext-cell} does not apply. 
This boundary term is clearly no longer necessary if we accept to control the Mumford-Shah of the extended function only far from the boundary.
\end{rem}

\begin{rem}
In Theorem \ref{stochastic-extension-lemma-SBV} it is not necessary to assume that the connected components of $K$ are balls. For instance, the case where each component of 
$K$ is a smooth strictly convex domain does not essentially differ from the case of spherical inclusions. 
\end{rem}

For later use we also state the analogue of Lemma \ref{t:ext-cell} for Sobolev functions (Lemma \ref{Sob:ext-cell}) and for partitions (Lemma \ref{Part:ext-cell}).

\begin{lemma}[Sobolev-extension in an annulus]\label{Sob:ext-cell}
Let $n\geq 2$ and $p>1$; let $\delta >0$ and $r_\ast>\delta$ be fixed. Let $\theta\in \R^n$ and $0<r<r_\ast$; then there exist an extension operator $T_{\theta, r} : W^{1,p}(B_{r,r+\delta}(\theta)) \rightarrow W^{1,p}(B(\theta, r+\delta))$ and a constant $c=c(n,p,\delta, r_\ast)>0$ such that
	\begin{align*}
	T_{\theta,r} u &= u \ \ \mathcal{L}^n\text{-a.e. in } \ B_{r,r+\delta}(\theta),
	\\
	\|T_{\theta,r} u\|_{L^p(B(\theta,r+\delta))}  &\leq c \,\|u\|_{L^p(B_{r,r+\delta}(\theta))},
	\\
	\|D(T_{\theta,r} u)\|_{L^p(B(\theta,r+\delta))}  &\leq c \,\|Du\|_{L^p(B_{r,r+\delta}(\theta))},
	\end{align*}
for every $u \in W^{1,p}(B_{r,r+\delta}(\theta))$. The constant $c$ is invariant under translations and homotheties. 
\end{lemma}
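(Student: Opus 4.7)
The plan is to mimic, almost verbatim, the proof of Lemma \ref{t:ext-cell}, replacing the $GSBV^p$-extension result of Cagnetti--Scardia with a classical Sobolev extension theorem. Specifically, for any pair of bounded Lipschitz domains $A',A \subset \R^n$ with $A'$ connected, $A'\subset A$ and $A\setminus A'\subset\subset A$, there exists a continuous linear extension operator $W^{1,p}(A')\to W^{1,p}(A)$ whose operator norm is invariant under translations and homotheties: this is standard, and can be seen by noting that the $W^{1,p}$-seminorm transforms homogeneously under dilations, so the extension constant for a dilated copy $\lambda A',\lambda A$ coincides with the one for $A',A$. I will apply this on two fixed reference pairs — the annulus $B_{1,2}(0)$ inside the ball $B(0,2)$, and the dyadic annulus $B_{1,1+\delta/r_*}(0)$ inside $B_{1,(1+\delta/r_*)^2}(0)$ — to obtain a single constant $c = c(n,p,\delta,r_*)>0$ that will govern all the extensions.

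Following the strategy in Lemma \ref{t:ext-cell}, I split into two cases. In \textbf{Case 1} ($r<\delta$) I use the reference pair $B_{1,2}(0)\subset B(0,2)$, rescaled and translated to $B_{r,2r}(\theta)\subset B(\theta,2r)$, to extend the restriction $u|_{B_{r,2r}(\theta)}$ to a function $w\in W^{1,p}(B(\theta,2r))$ with $\|w\|_{L^p}\le c\|u\|_{L^p(B_{r,2r}(\theta))}$ and similarly for the gradient. Gluing $u$ on $B_{r,r+\delta}(\theta)$ with $w$ on $\overline B(\theta,r)$ produces the desired $T_{\theta,r}u\in W^{1,p}(B(\theta,r+\delta))$, which has the two required norm bounds. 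In \textbf{Case 2} ($r\ge\delta$) I perform the dyadic construction exactly as in the proof of Lemma \ref{t:ext-cell}: I define the annuli $A_i$ with ratio $1+\delta/r_*$ between outer and inner radii, set $N_\delta$ and $r_\delta<\delta$ as in \eqref{def:Ndelta}--\eqref{def:rdelta}, and iteratively apply the Sobolev extension on each pair $A_i\subset A_i\cup A'_{i+1}$ to extend $u$ to $W^{1,p}(B_{r_\delta,r+\delta}(\theta))$. At each iteration the constant is the same $c$ (by homothety invariance), so after $N_\delta$ steps the accumulated constant is controlled by $1+\sum_{i=1}^{N_\delta}c^i =: \hat c(n,p,\delta,r_*)$. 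Finally, since $r_\delta<\delta$, Case~1 applies to extend from $B_{r_\delta,r_\delta+\delta}(\theta)$ into the central ball $B(\theta,r_\delta)$, completing the construction of $T_{\theta,r}u$ on all of $B(\theta,r+\delta)$.

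The proof is essentially routine once the $GSBV$ version is in hand: there is no significant analytical obstacle, since we only need to propagate the $L^p$ and $L^p$-gradient bounds separately (rather than the combined Mumford--Shah energy), and no surface term has to be tracked. The only point worth double-checking is the homothety invariance of the Sobolev extension constant, which follows from the scaling $\|u(\lambda\cdot)\|_{L^p} = \lambda^{-n/p}\|u\|_{L^p}$ and $\|\nabla(u(\lambda\cdot))\|_{L^p}=\lambda^{1-n/p}\|\nabla u\|_{L^p}$: if $T$ is an extension on the reference pair and $\Phi(x)=\lambda x+\theta$, then $T_\Phi v := (T(v\circ\Phi))\circ\Phi^{-1}$ is an extension on the rescaled pair whose operator norms from $L^p$ into $L^p$ and from $\dot W^{1,p}$ into $\dot W^{1,p}$ equal those of $T$. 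Combining these scalings gives the uniform bounds stated in the lemma.
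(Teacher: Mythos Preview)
Your proposal is correct and is essentially identical to the paper's own proof, which simply says that one repeats every step of the proof of Lemma~\ref{t:ext-cell}, replacing Theorem~\ref{estfixdomintro} by the Sobolev extension result \cite[Lemma~2.6]{ACDP}. The only cosmetic difference is that the paper explicitly cites \cite{ACDP} for the translation- and homothety-invariant Sobolev extension, whereas you derive this invariance by hand from the scaling of the $L^p$ norms.
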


\begin{proof}
The proof can be obtained by repeating every step of the proof of Lemma \ref{t:ext-cell}, up to invoking the extension result \cite[Lemma 2.6]{ACDP} instead of Theorem \ref{estfixdomintro}.
\end{proof}


\begin{lemma}[Extension of a partition in an annulus]\label{Part:ext-cell}
Let $n\geq 2$, and let $\delta >0$ and $r_\ast>\delta$ be fixed. Let $\theta\in \R^n$ and $0<r<r_\ast$; then there exist an extension operator $T_{\theta, r} : \mathcal{P}(B_{r,r+\delta}(\theta)) \rightarrow \mathcal{P}(B(\theta, r+\delta))$ and a constant $c=c(n,\delta, r_\ast)>0$ such that
	\begin{align*}
	T_{\theta,r} u &= u \ \ \mathcal{L}^n\text{-a.e. in } \ B_{r,r+\delta}(\theta),
	\\
	\HH(S_{T_{\theta,r} u}\cap B(\theta,r+\delta))
	&\leq c \HH(S_u\cap B_{r,r+\delta}(\theta)),
	\end{align*}
for every $u \in \mathcal{P}(B_{r,r+\delta}(\theta))$. The constant $c$ is invariant under translations and homotheties. 
\end{lemma}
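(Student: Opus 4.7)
Let $u \in \mathcal{P}(B_{r,r+\delta}(\theta))$. Following the hint in the introduction, the plan is to combine a perimeter-minimisation argument in the annulus with the regularity result Lemma \ref{fracture-lemma}, in order to locate a sphere in $B_{r,r+\delta}(\theta)$ on which an auxiliary minimising partition is constant, and then to extend $u$ inside $B(\theta, r)$ by that constant value. The construction splits into two regimes according to the size of $\HH(S_u \cap B_{r,r+\delta}(\theta))$ relative to $\delta^{n-1}$, where the threshold $c_0 = c_0(n)$ will be chosen below in terms of the constant $\gamma$ of Lemma \ref{fracture-lemma}.

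\textbf{Large-jump regime.} When $\HH(S_u \cap B_{r,r+\delta}(\theta)) \geq c_0 \delta^{n-1}$, I set $T_{\theta,r}u = u$ on the annulus and $T_{\theta,r}u \equiv 0$ on $B(\theta, r)$. The only new jumps lie on $\partial B(\theta,r)$, with total $\HH$-measure at most $n\omega_n r^{n-1} \leq n\omega_n r_*^{n-1}$; since $\HH(S_u \cap B_{r,r+\delta}(\theta)) \geq c_0 \delta^{n-1}$, this contribution is controlled by $(n\omega_n/c_0)(r_*/\delta)^{n-1}\HH(S_u \cap B_{r,r+\delta}(\theta))$, which depends only on $n$ and the ratio $r_*/\delta$.

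\textbf{Small-jump regime.} When $\HH(S_u \cap B_{r,r+\delta}(\theta)) < c_0 \delta^{n-1}$, I consider a minimiser $v^*$ of $v \mapsto \HH(S_v \cap B_{r,r+\delta}(\theta))$ among $v \in \mathcal{P}(B_{r,r+\delta}(\theta))$ with $v = u$ a.e.\ in the two thin shells $B_{r,r+\delta/4}(\theta)$ and $B_{r+3\delta/4,r+\delta}(\theta)$; existence follows by standard $SBV$ compactness. By minimality $\HH(S_{v^*}) \leq \HH(S_u) < c_0 \delta^{n-1}$, and $v^*$ satisfies hypothesis $(H1)$ of Lemma \ref{fracture-lemma} in the free sub-annulus $B_{r+\delta/4,r+3\delta/4}(\theta)$, of thickness $\delta/2$ (the argument for $(H1)$ uses the usual gluing: any compactly supported perturbation $w$ of $v^*$ in this sub-annulus can be extended by $v^*$ outside, yielding an admissible competitor for the minimisation). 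Choosing $c_0 := \gamma/2^{n-1}$ ensures $(H2)$; Lemma \ref{fracture-lemma} applied with $\tau = 1$, $r_0 = r + \delta/4$, $s_0 = \delta/2$ then produces a radius $\bar r \in (r + 5\delta/12, r + 7\delta/12)$ with $S_{v^*} \cap \partial B(\theta, \bar r) = \emptyset$. Since $\partial B(\theta, \bar r)$ is connected and $v^*$ takes only the values $0$ and $1$, the trace of $v^*$ on $\partial B(\theta, \bar r)$ is a single constant $\alpha \in \{0, 1\}$. I then set $T_{\theta,r}u = u$ on the annulus and $T_{\theta,r}u \equiv \alpha$ on $B(\theta, r)$.

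\textbf{Main obstacle.} The crucial estimate is that, in the small-jump regime, the $\HH$-measure of the new jumps of $T_{\theta,r}u$ across $\partial B(\theta, r)$, namely $\HH(\{u^+ \neq \alpha\} \cap \partial B(\theta, r))$ (with $u^+$ the trace of $u$ from the annulus), is controlled by $\HH(S_u)$. By the constraint $v^* = u$ in $B_{r, r+\delta/4}(\theta)$, one has $\HH(\{u^+ \neq \alpha\} \cap \partial B(\theta, r)) = \HH(\{v^* \neq \alpha\} \cap \partial B(\theta, r))$. Set $E := \{v^* \neq \alpha\} \cap B_{r, \bar r}(\theta)$: its trace on $\partial B(\theta, \bar r)$ is empty, while its trace on $\partial B(\theta, r)$ is exactly $\{v^* \neq \alpha\} \cap \partial B(\theta, r)$. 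The radial projection $\pi(x) := \theta + r(x-\theta)/|x-\theta|$ from $B_{r, \bar r}(\theta)$ onto $\partial B(\theta, r)$ has vanishing radial derivative and tangential Jacobian $r/|x-\theta| \leq 1$, so it is $1$-Lipschitz. Moreover, for every $x \in \{v^* \neq \alpha\} \cap \partial B(\theta, r)$ the radial segment joining $x$ to $\theta + \bar r (x-\theta)/r$ starts inside $E$ and ends outside it, so it must cross $\partial^* E$; hence $\{v^* \neq \alpha\} \cap \partial B(\theta, r) \subset \pi(\partial^* E)$ up to $\HH$-null sets, and
\begin{equation*}
\HH(\{v^* \neq \alpha\} \cap \partial B(\theta, r)) \leq \HH(\pi(\partial^* E)) \leq \HH(\partial^* E) = \HH(S_{v^*} \cap B_{r, \bar r}(\theta)) \leq \HH(S_u \cap B_{r, r+\delta}(\theta)).
\end{equation*}
Combining the two regimes yields the final constant $c(n, \delta, r_*) = \max\{2,\; 1 + n\omega_n (r_*/\delta)^{n-1}/c_0\}$, which depends only on $n$ and the scale-invariant ratio $r_*/\delta$, and is therefore invariant under translations and homotheties.
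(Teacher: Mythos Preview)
Your approach is genuinely different from the paper's and, modulo one technical gap, correct. The paper proceeds by \emph{reflecting} $u$ into an inner annulus (inside the hole), minimising the perimeter there, and applying Lemma~\ref{fracture-lemma} to find a clean sphere \emph{inside} $B(\theta,r)$; when $r\geq\delta$ this step is iterated through a dyadic chain of annuli as in Lemma~\ref{t:ext-cell}. Your argument bypasses both the reflection and the dyadic iteration: you minimise the perimeter in a sub-annulus of the \emph{original} annulus, locate a clean sphere at some radius $\bar r>r$, and then fill the hole with the constant trace on that sphere. The price is a possible new jump on $\partial B(\theta,r)$, which you control by the radial-projection/slicing argument. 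This is a nice trade-off: your proof is a single case, and your constant depends only on $n$ and the scale-invariant ratio $r_\ast/\delta$.

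The gap is in the application of Lemma~\ref{fracture-lemma}. That lemma requires $0<s\leq r$; in your setting the inner radius of the free sub-annulus is $r+\delta/4$ and the thickness is $s=\delta/2$, so the hypothesis becomes $\delta/2\leq r+\delta/4$, i.e.\ $r\geq\delta/4$. For $r<\delta/4$ the application is not valid as written. The fix is easy: shift the free sub-annulus outward, e.g.\ work in $B_{r+\delta/2,\,r+3\delta/4}(\theta)$, whose inner radius $r+\delta/2\geq\delta/2$ always dominates its thickness $\delta/4$, and adjust $c_0$ to $\gamma/4^{n-1}$ accordingly. A second, minor point: the equality ``$\HH(\partial^\ast E)=\HH(S_{v^\ast}\cap B_{r,\bar r}(\theta))$'' is only correct if $\partial^\ast E$ is read as the reduced boundary of $E$ \emph{relative to the open annulus} $B_{r,\bar r}(\theta)$; otherwise $\partial^\ast E$ would contain the very trace set on $\partial B(\theta,r)$ you are trying to bound. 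It is cleaner to work directly with $S_{v^\ast}\cap B_{r,\bar r}(\theta)$ and invoke one-dimensional slicing along radial lines (which is what your ``the segment must cross $\partial^\ast E$'' argument really is) together with the $1$-Lipschitz property of the radial projection onto the inner sphere, which indeed holds on $\{|x-\theta|\geq r\}$.
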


\begin{proof}
The proof is obtained by combining an adaptation of the proof of Theorem \ref{estfixdomintro} (\cite[Theorem 1.1]{CS}) with the proof of Lemma \ref{t:ext-cell}.
\medskip

\noindent
\textbf{Case 1: $r<\delta$.} In this case we extend from $B_{r, 2r}(\theta)\subset B_{r,r+\delta}(\theta)$ to $B(\theta,2r)$. Up to a translation and a rescaling we reduce to extending a partition $v$ from $B_{1,2}(0)$ to $B(0,2)$. Let $\Phi: B_{\frac12,1}(0)\to B_{1,\frac32}(0)$ denote the reflection map with $\Phi=\textrm{Id}$ on $\partial B(0,1)$, which associates to a point $z\in B_{\frac12,1}(0)$ the point $\tilde z \in B_{1,\frac32}(0)$ on the line joining $z$ with $0$, with $(z+\tilde z)/2\in \partial B(0,1)$. Then the function 
$$
\tilde v:= 
\begin{cases}
v \quad & \text{in } B_{1,2}(0)\\
v\circ \Phi & \text{in } B_{\frac12,1}(0)
\end{cases}
$$
satisfies $\tilde v\in \mathcal{P}(B_{\frac12,2}(0))$ and
\begin{equation}\label{1-vv2}
\HH(S_{\tilde v}\cap B_{\frac12,2}(0)) \leq c \HH(S_{v}\cap B_{1,2}(0)),
\end{equation}
where $c>0$ is a constant depending only on the dimension $n$. Finally, we modify $\tilde v$ in the annulus $B_{\frac12,1}(0)$, and substitute it with a minimiser of the perimeter. More precisely, we let $\hat v\in \mathcal{P}(B_{\frac12,2}(0))$ be a solution of the following minimisation problem 
$$
 \inf\big\{\HH(S_{w}\cap B_{\frac12,2}(0): \ w\in L^1_{\rm{loc}}(\R^n), w_{|B_{\frac12,2}(0)}\in \mathcal P(B_{\frac12,2}(0)), \ w = v \ \text{ in } B_{1,2}(0)\big\}.
$$
Then,  \eqref{1-vv2} gives
\begin{equation}\label{vv2}
\HH(S_{\hat v}\cap B_{\frac12,2}(0)) \leq \HH(S_{\tilde v}\cap B_{\frac12,2}(0)) \leq c \HH(S_{v}\cap B_{1,2}(0)).
\end{equation}
We now distinguish the cases of a ``small'' or ``large'' jump set of $\hat v$ in the annulus $B_{\frac12,1}(0)$. We say that $\hat v$ has a small jump set if 
\begin{equation}\label{vv1}
\HH(S_{\hat v}\cap B_{\frac12,1}(0)) \leq \frac{\gamma}{2^{n-1}},
\end{equation}
where $\gamma=\gamma(n)>0$ is the universal constant as in Lemma \ref{fracture-lemma} (applied with $\tau=1$). We note that the function $\hat v$ satisfies the assumptions of Lemma \ref{fracture-lemma} in $B_{\frac12,1}(0)$. Indeed, $(H1)$ follows by the local minimality of $\hat v$ in the annulus, and $(H2)$ is exactly \eqref{vv1}. Therefore Lemma \ref{fracture-lemma} (with $r=s=r_0=s_0=\frac12$) yields the existence of $\bar{r} \in (\frac46, \frac56)$ such that 
$$
S_{\hat v} \cap \partial B(0,\bar{r}) = \emptyset,
$$ 
namely the trace of ${\hat v}$ is constant on $\partial B(0,\bar{r})$. We denote this constant value by $m$, 
and we define the function $\bar v$ in $B(0,2)$ as 
		\begin{equation*}
			\bar v :=  
			\begin{cases} 
			\displaystyle
			{\hat v}
			& \mbox{in } B_{\bar r, 2}(0), \cr 
			m
			& \mbox{in } \overline B(0,\bar{r}). 
			\end{cases}
		\end{equation*}	
Then $\bar v\in \mathcal{P}(B(0,2))$ and, by \eqref{vv2},
\begin{align*}
\HH(S_{\bar v}\cap B(0,2)) = \HH(S_{\hat v}\cap B_{\bar r, 2}(0)) \leq \HH(S_{\hat v}\cap B_{\frac12,2}(0))  \leq c \HH(S_{v}\cap B_{1,2}(0)).
\end{align*}
Hence the function $\bar v$ is the required extension.

If instead \eqref{vv1} is not satisfied, then the extension is obtained by simply filling the perforation with, \textit{e.g.}, the constant value $0$. In so doing the additional perimeter 
created by the discontinuity on $\partial B(0,1)$ has a comparable perimeter to $\frac{\gamma}{2^{n-1}}$, up to a multiplicative constant. More precisely, we set
		\begin{equation*}
			\bar v:=  
			\begin{cases} 
			\displaystyle
			 v
			& \mbox{in } B_{1,2}(0), \cr 
			0
			& \mbox{in } \overline{B}(0,1). 
			\end{cases}
		\end{equation*}	
	Clearly $\bar v\in \mathcal P(B(0,2))$, and
			\begin{align}
			\mathcal{H}^{n-1}(S_{\bar v} \cap B(0,2))
			&\leq  \mathcal{H}^{n-1}(S_{ v} \cap B_{1,2}(0)) + s_{n}\nonumber \\ 
			&<  \mathcal{H}^{n-1}(S_{v} \cap B_{1,2}(0))+\frac{s_{n} 2^{n-1}}{\gamma}\mathcal{H}^{n-1}(S_{{\hat v}} \cap B_{\frac12,1}(0))
			\nonumber
			\\ \label{stima-J2}
			&\leq 
			c  \, \mathcal{H}^{n-1}(S_{v} \cap B_{1,2}(0)),
		\end{align}
where $s_{n}:=\mathcal H^{n-1}(\partial B(0,1))$ and $c=c(n)>0$. Hence also in this case the function $\bar v$ is the required extension.

\medskip

\noindent
\textbf{Case 2: $r>\delta$.} Since $r < r_\ast$, we have that
	\begin{equation*}
		B(\theta,r(1+ \delta/r_\ast)) \subset B(\theta,r+\delta).
	\end{equation*} 
We now extend from $B(\theta,r(1+ \delta/r_\ast))\setminus \overline B(\theta,r)$ to $B(\theta,r)\setminus \overline B(\theta,r(1+ \delta/r_\ast)^{-1})$. Up to a translation and a rescaling, we can restrict our attention to the case $\theta=0$ and $r=1$\ie to extend from the set $A_1:= B(0,(1+ \delta/r_\ast))\setminus \overline B(0,1)$ to $A_2:= B(0,1)\setminus \overline B(0,(1+ \delta/r_\ast)^{-1})$. Let $v\in \mathcal P(A_1)$; then by denoting with $\Phi: A_2\to A_1$ the reflection map with $\Phi=\textrm{Id}$ on $\partial B(0,1)$, we have that  the function 
$$
\tilde v:= 
\begin{cases}
v \quad & \text{in } A_1\\
v\circ \Phi & \text{in } A_2
\end{cases}
$$
satisfies $\tilde v\in \mathcal{P}(A_1\cup A_2')$, where $A_2' := A_2\cup \partial B(0,1)$, and
\begin{equation}
\HH(S_{\tilde v}\cap (A_1\cup A_2')) \leq c \HH(S_{v}\cap A_1),
\end{equation}
with $c = c(n,\delta, r_\ast)>0$. Again, as in Step 1, we denote with $\hat v \in \mathcal{P}(A_1\cup A'_2)$ a minimiser of the perimeter in $A_1\cup A'_2$ such that 
$\hat v = v$ in $A_1$. We then apply Lemma \ref{fracture-lemma} to obtain the desired extension. Since $A_2 = B(0,1)\setminus B(0,\frac{r^*}{r^*+\delta})$, we have that 
$r=\frac{r^*}{r^*+\delta}$ and $s=\frac{\delta}{r^*+\delta}$ (and note that $s\leq r$ since $\delta<r^*$). 

In this case we say that $\hat v$ has a small jump set in $A_2$ if 
\begin{equation}\label{2:vv1}
\HH(S_{\hat v}\cap A_2) \leq \gamma \left(\frac{\delta}{r^*+\delta}\right)^{n-1},
\end{equation}
where $\gamma=\gamma(n)>0$ is the universal constant as in Lemma \ref{fracture-lemma} (applied with $\tau=1$). We note that the function $\hat v$ satisfies the assumptions of Lemma \ref{fracture-lemma} in $A_2$. Therefore Lemma \ref{fracture-lemma} (with $r_0=r$ and $s_0=s$) yields the existence of $\bar{r} \in \frac13(\frac{3r^*+\delta}{r^*+\delta}, \frac{3r^*+2\delta}{r^*+\delta})$ such that 
$$
S_{\hat v} \cap \partial B(0,\bar{r}) = \emptyset,
$$ 
namely the trace of ${\hat v}$ is constant on $\partial B(0,\bar{r})$, with value, say, $m\in \{0,1\}$. Proceeding as in the previous step yields the conclusion.

\end{proof}


\subsection{Compactness}\label{sect:cpt} In this subsection we use Theorem \ref{stochastic-extension-lemma-SBV} to prove that a sequence 
$(u_\e)$ with equibounded energy $E_\e(\omega)$ can be replaced, without changing the energy, with a sequence which is precompact with respect to the strong 
$L^1$-convergence.


\begin{propo}[Compactness]\label{prop:comp} 
Let $\omega\in \Omega$ and $A\in \mathcal A$ be fixed. Let $(u_\e) \subset L^1(A)$ be a sequence satisfying 
\begin{equation}\label{p:comp-bd}
\sup_{\e>0} \Big(E_\e(\omega)(u_\e,A)+\|u_\e\|_{L^\infty (A\setminus \e K(\omega))}\Big)<+\infty.
\end{equation}
Then there exist a sequence $(\tilde u_\e) \subset SBV^p(A)$ and a function $u\in SBV^p(A)$ such that $\tilde u_\e = u_\e$ $\mathcal L^n$-a.e. in $A\setminus \e K(\omega)$ and (up to a subsequence) $\tilde u_\e \to u$ strongly in $L^1(A)$. 
\end{propo}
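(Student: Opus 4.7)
The plan is to deduce compactness directly from the extension result Theorem \ref{stochastic-extension-lemma-SBV} combined with the non-degenerate lower bounds $(f1)$ and $(g1)$ on the volume and surface integrands, followed by a standard application of Ambrosio's $SBV$-compactness theorem. The perforations are the only obstruction to coercivity, and the extension operator $T_\e$ is precisely designed to remove this obstruction at the cost of a controlled constant.

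More concretely, for every $\e>0$ I would set $\tilde u_\e := T_\e u_\e \in GSBV^p(A)$, where $T_\e$ is the operator provided by Theorem \ref{stochastic-extension-lemma-SBV}. Property $(E1)$ immediately gives $\tilde u_\e = u_\e$ $\mathcal L^n$-a.e. on $A\setminus \e K(\omega)$, which is the first required conclusion. By the hypothesis \eqref{p:comp-bd} we have $\|u_\e\|_{L^\infty(A\setminus \e K(\omega))}\leq C$ uniformly in $\e$, so property $(E3)$ yields the uniform bound
$$
\|\tilde u_\e\|_{L^\infty(A)} = \|u_\e\|_{L^\infty(A\setminus \e K(\omega))}\leq C.
$$

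The central step is to bound the Mumford--Shah energy of $\tilde u_\e$ uniformly. Using the lower bounds $(f1)$ and $(g1)$ in the definition \eqref{def:Eneps} of $E_\e(\omega)$ we get
$$
\min\{c_1,c_3\}\, MS^p(u_\e, A\setminus \e K(\omega)) \leq E_\e(\omega)(u_\e,A),
$$
and therefore $MS^p(u_\e, A\setminus \e K(\omega))$ is uniformly bounded by \eqref{p:comp-bd}. Property $(E2)$ then implies
$$
MS^p(\tilde u_\e, A) \leq c\bigl(MS^p(u_\e, A\setminus \e K(\omega)) + \mathcal H^{n-1}(\partial A)\bigr) \leq C',
$$
for some constant $C'$ independent of $\e$, where we used that $A\in \mathcal A$ has Lipschitz boundary.

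At this point $(\tilde u_\e)$ is a sequence in $GSBV^p(A)$ with uniformly bounded $L^\infty$-norm, uniformly bounded $L^p$-norm of the approximate gradient, and uniformly bounded $\mathcal H^{n-1}$-measure of the jump set. In particular $(\tilde u_\e)\subset SBV^p(A)$, and Ambrosio's compactness theorem (see \cite[Theorems 4.7 and 4.8]{AFP}) produces a subsequence (not relabelled) and a limit $u\in SBV^p(A)\cap L^\infty(A)$ such that $\tilde u_\e \to u$ strongly in $L^1(A)$, which concludes the proof. Since all the hard work has been carried out in Lemma \ref{t:ext-cell} and Theorem \ref{stochastic-extension-lemma-SBV}, no substantial obstacle remains at this stage; the only point requiring mild care is ensuring that the boundary term $\mathcal H^{n-1}(\partial A)$ produced by $(E2)$ is indeed finite, which follows from $A\in \mathcal A$.
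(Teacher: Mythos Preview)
Your proof is correct and follows essentially the same route as the paper's own argument: apply the extension operator $T_\e$ of Theorem~\ref{stochastic-extension-lemma-SBV}, use $(E3)$ together with the $L^\infty$ bound and $(E2)$ together with $(f1)$--$(g1)$ to control $MS^p(\tilde u_\e,A)$ uniformly, and conclude via Ambrosio's compactness theorem. The only cosmetic difference is that the paper first records explicitly that \eqref{p:comp-bd} forces $u_\e\in SBV^p(A\setminus\e K(\omega))\cap L^\infty$ before applying $T_\e$, whereas you state this implicitly.
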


\begin{proof}
We start observing that \eqref{p:comp-bd} yields $(u_{\varepsilon}) \subset SBV^p(A \setminus \e K(\om))\cap L^\infty(A\setminus \e K(\om))$. 
Let $T^\omega_{\e}$ be the extension operator from $A\setminus \e K(\om)$ to $A$ as in Theorem \ref{stochastic-extension-lemma-SBV} and set
$$
\tilde{u}_{\varepsilon}:= T^\omega_{\e}\big({u_{\varepsilon}}_{|A \setminus \varepsilon K(\omega)}\big).
$$
Then $\tilde u_{\varepsilon} \in SBV^p(A)\cap L^\infty(A)$, $\tilde{u}_{\varepsilon} = u_{\varepsilon}$ a.e. in $A \setminus \varepsilon K(\omega)$, and (E2) gives
\begin{align}
	MS^p(\tilde u_{\e}, A) & \leq c(n,p,\delta,r_\ast) \left(MS^p(u_{\e}, A \setminus \e K (\omega)) + \HH(\partial A)\right)
	\nonumber \\
	& = \frac{c(n,p,\delta,r_\ast)}{c_1\wedge c_3} \left(E_{\e}(\omega)(u_{\e}, A)+ \HH(\partial A)\right).
	\label{stochastic-uniform-bound-energy}
\end{align} 
Since moreover by (E3) the extension operator $T^\omega_{\e}$ preserves the $L^\infty$-norm, by combining \eqref{p:comp-bd} and \eqref{stochastic-uniform-bound-energy} we immediately deduce that
$$
\sup_{\e>0}\big(MS^p(\tilde u_{\e}, A)+\|\tilde u_{\e}\|_{L^\infty (A)}\big)<+\infty.
$$
Therefore by  Ambrosio's Compactness Theorem \cite[Theorem 4.8]{AFP}, up to subsequences not relabelled, $\tilde{u}_{\varepsilon} \to u$ strongly in $L^{1}(A)$, for some $u \in SBV^p(A)$. 

\end{proof}

\begin{rem}[Weak coerciveness]\label{r:weak-conv}
Let $\om\in \Om$ be fixed and let $(u_\e) \subset L^1(A)$ be such that 
\begin{equation*}
\sup_{\e>0} \Big(E_\e(\omega)(u_\e,A)+\|u_\e\|_{L^\infty (A\setminus \e K(\omega))}\Big)<+\infty.
\end{equation*}
Then, for $P$-a.e. $\om\in \Om$, up to a subsequence not relabelled, we have
\begin{equation}\label{weak-conv}
u_\e \chi_{(\R^n \setminus \e K(\omega))} = u_\e d_\e(\om,\cdot) \wto u \, d(\om) \quad \text{weakly in $L^1(A)$},
\end{equation}
for some $u\in SBV^p(A)$, with $d(\om)$ is as in Definition \ref{dens-sd}.

Indeed, Proposition \ref{prop:comp} yields the existence of a sequence $(\tilde u_\e)\subset SBV^p(A)$ and a function $u \in SBV^p(A)$ such that $\tilde u_\e = u_\e$ in $A\setminus \e K(\om)$ and (up to a subsequence not relabelled) 
\begin{equation}\label{rem:formula}
\tilde{u}_{\varepsilon} \to u \quad  \text{strongly in $L^{1}(A)$}.
\end{equation} 
On the other hand, by the Birkhoff's Ergodic Theorem (see Remark \ref{rem:B}) for $P$-a.e. $\om \in \Om$ we have 
\begin{equation}\label{rem:formula1}
\chi_{(\R^n\setminus \e K(\omega))} = d_\e(\om,\cdot) \wto d(\om) \quad \text{weakly$^*$ in $L^\infty(A)$.}
\end{equation}
Then the conclusion follows from the equality $u_\e \chi_{ (\R^n \setminus \e K(\omega))}=\tilde u_\e \chi_{(\R^n\setminus \e K(\omega))}$, by combining  \eqref{rem:formula} and \eqref{rem:formula1}.  
\end{rem}

		
\section{Homogenisation result}		
\noindent In this section we prove both the existence of the homogenisation formulas defining $f_{\rm hom}$ and $g_{\rm hom}$ and the almost sure $\Gamma$-convergence of $E_\e(\om)$ towards $E_{\rm hom}(\om)$ stated in Theorem \ref{statement-main-thm}.

The existence of the homogenisation formulas is achieved in two steps. The first step consists in applying \cite[Theorem 3.12]{CDMSZ2} to a coercive perturbation of $E_\e$. 
Then in the second step we pass to the limit in the perturbation parameter and show that this procedure leads to $f_{\rm hom}$ and $g_{\rm hom}$. This last step requires the separate extension results for Sobolev functions (Lemma \ref{Sob:ext-cell}) and for partitions (Lemma \ref{Part:ext-cell}). 
		
\begin{theo}[Homogenisation formulas]\label{en-density_vs}
Let $f$ and $g$ be stationary random volume and surface integrands, and let $D\subset \R^n$ be a random perforated domain as in Definition \ref{ass:random-domain}. Assume that the stationarity of $f$, $g$, and $D$ is satisfied with respect to the same group $(\tau_y)_{y\in \R^n}$ of $P$-preserving transformations on $(\Om,\T,P)$. For $\om\in \Om$, let $F(\om)$ and $G(\om)$ be as in \eqref{Effe} and \eqref{Gi}, respectively. Let moreover $m^{1,p}_{F(\om)}$ and $m^{\mathrm{pc}}_{G(\om)}$ be defined by \eqref{emme} and \eqref{emmeG}, respectively.
Then there exists $\Om'\in \mathcal T$, with $P(\Om')=1$,
such that for every $\om\in \Om'$, for every $x, \xi \in \R^{n}$, and every $\nu\in \Sph^{n-1}$ the limits  
\begin{align*}
\lim_{t\to +\infty} \frac{m_{F(\om)}(\ell_\xi,Q_t(tx))}{t^{n}} \quad \textrm{and} \quad \lim_{t\to +\infty} \frac{m^{\mathrm{pc}}_{G(\om)}(u_{t x,1,\nu},Q^\nu_t(tx))}{t^{n-1}}
\end{align*}
exist and are independent of $x$. More precisely, there exist a $(\T\otimes \B^n)$-measurable function $f_{\mathrm{hom}} \colon \Om \times \R^n \to [0,+\infty)$ and a $(\T \otimes \B(\Sph^{n-1}))$-measurable function $g_{\mathrm{hom}} \colon \Om \times \Sph^{n-1} \to [0,+\infty)$ such that, for every $x \in \mathbb{R}^n$,
$\xi \in \mathbb{R}^{n}$, and $\nu \in \mathbb{S}^{n-1}$
\begin{equation}\label{f-hom}
f_{\mathrm{hom}}(\om,\xi)=\lim_{t\to +\infty} \frac{1}{t^n} m^{1,p}_{F(\om)}(\ell_\xi,Q_t(tx))=  \lim_{t\to +\infty} \frac{1}{t^n} m^{1,p}_{F(\om)} (\ell_\xi, Q_t(0)),
\end{equation}
\begin{equation}\label{g-hom}
g_{\mathrm{hom}}(\om,\nu)=\lim_{t\to +\infty} \frac{1}{t^{n-1}}m^{\mathrm{pc}}_{G(\om)}(u_{t x,1,\nu},Q^\nu_t(tx))= \lim_{t\to +\infty} \frac{1}{t^{n-1}}m^{\mathrm{pc}}_{G(\om)}(u_{0,1,\nu},Q^\nu_t(0)).
\end{equation}
If, in addition, $f$, $g$, and $D$ are ergodic, then $f_{\mathrm{hom}}$ and $g_{\mathrm{hom}}$ are independent of $\om$, and
\begin{equation*}
f_{\mathrm{hom}}(\xi)=\lim_{t\to +\infty}\, \frac{1}{t^n} {\int_\Om m^{1,p}_{F(\om)} (\ell_\xi, Q_t(0))\,d P(\om)}, 
\end{equation*}
\begin{equation*}
g_{\mathrm{hom}}(\nu)= \lim_{t\to +\infty} \frac{1}{t^{n-1}} \int_\Om m^{\mathrm{pc}}_{G(\om)} (u_{0,1,\nu}, Q_t^\nu(0)) \, d P(\om).
\end{equation*}
\end{theo}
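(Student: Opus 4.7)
The plan follows the two-step scheme outlined in the introduction: first perturb the non-coercive integrands to coercive ones and apply \cite[Theorem 3.12]{CDMSZ2}, then let the perturbation vanish. For each $k\in \N$, set $f^k(\om,x,\xi):=a^k(\om,x)f(\om,x,\xi)$ and $g^k(\om,x,\nu):=a^k(\om,x)g(\om,x,\nu)$ with $a^k$ as in the introduction. Since $K$, $f$ and $g$ are stationary with respect to the same group $(\tau_y)_{y\in \R^n}$, the integrands $f^k, g^k$ are themselves stationary random integrands, and they now satisfy the non-degenerate bounds required by \cite[Theorem 3.12]{CDMSZ2} (with lower constants $c_1/k$ and $c_3/k$). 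Applying that result to the corresponding coercive functionals
$$
F^k(\om)(u,A):=\int_A f^k(\om,x,\nabla u)\,dx, \qquad G^k(\om)(u,A):=\int_{S_u\cap A} g^k(\om,x,\nu_u)\,d\HH,
$$
yields, for each $k$, a set $\Om'_k\in \T$ of full probability and measurable functions $f^k_{\rm hom}, g^k_{\rm hom}$ satisfying the analogues of \eqref{f-hom}--\eqref{g-hom} with $F,G$ replaced by $F^k,G^k$, both independent of $x$ and deterministic in the ergodic case. Setting $\Om':=\bigcap_k \Om'_k$ and observing that $a^{k+1}\leq a^k$ pointwise, the sequences $k\mapsto f^k_{\rm hom}(\om,\xi)$ and $k\mapsto g^k_{\rm hom}(\om,\nu)$ are non-increasing and non-negative, so the pointwise limits $f_{\rm hom}:=\lim_k f^k_{\rm hom}$ and $g_{\rm hom}:=\lim_k g^k_{\rm hom}$ exist and inherit the measurability and $x$-independence from $f^k_{\rm hom}, g^k_{\rm hom}$.

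The core of the proof is to identify these limits with \eqref{f-hom} and \eqref{g-hom} and, simultaneously, to establish the existence of the corresponding $t$-limits. One direction is immediate: the pointwise inequality $f^k(\om,x,\xi)\geq f(\om,x,\xi)\chi_{\R^n\setminus K(\om)}(x)$ (and its surface analogue) gives, for every competitor, $m^{1,p}_{F^k(\om)}\geq m^{1,p}_{F(\om)}$ and $m^{\mathrm{pc}}_{G^k(\om)}\geq m^{\mathrm{pc}}_{G(\om)}$, whence
$$
\lim_k f^k_{\rm hom}(\om,\xi) \geq \limsup_{t\to+\infty} \frac{m^{1,p}_{F(\om)}(\ell_\xi, Q_t(tx))}{t^n},
$$
and analogously for $g_{\rm hom}$.

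The reverse inequality is the main obstacle and relies crucially on the extension Lemmas \ref{Sob:ext-cell} and \ref{Part:ext-cell}. Given a near-minimiser $u_t\in W^{1,p}(Q_t(tx))$ of $m^{1,p}_{F(\om)}(\ell_\xi, Q_t(tx))$, whose gradient norm on $Q_t(tx)\setminus K(\om)$ is controlled by $Ct^{n/p}(1+|\xi|)$ via $(f1)$--$(f2)$, I apply Lemma \ref{Sob:ext-cell} in a $\delta$-annular neighbourhood of each perforation strictly contained in $Q_t(tx)$; the $O(t^{n-1})$ perforations cut by $\partial Q_t(tx)$ contribute only a lower-order term, exactly as in the proof of Theorem \ref{stochastic-extension-lemma-SBV}. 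The resulting $\tilde u_t\in W^{1,p}(Q_t(tx))$ coincides with $u_t$ outside $K(\om)$, still has boundary datum $\ell_\xi$, and its gradient on $K(\om)\cap Q_t(tx)$ is controlled in $L^p$ by that of $u_t$ on $Q_t(tx)\setminus K(\om)$, with a constant uniform in the inclusions thanks to the translation- and homothety-invariance of Lemma \ref{Sob:ext-cell}. Using $\tilde u_t$ as a competitor for $m^{1,p}_{F^k(\om)}$ and exploiting the factor $1/k$ on $K(\om)$, one obtains
$$
\frac{m^{1,p}_{F^k(\om)}(\ell_\xi, Q_t(tx))}{t^n} \leq \frac{m^{1,p}_{F(\om)}(\ell_\xi, Q_t(tx))}{t^n} + \frac{C(1+|\xi|^p)}{k},
$$
uniformly in $t$, $k$ and $\om$. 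Sending $t\to+\infty$ along a subsequence realising $\liminf_t$, and then $k\to+\infty$, gives the reverse bound together with the existence of the full $t$-limit, so $f_{\rm hom}(\om,\xi) = \lim_t m^{1,p}_{F(\om)}(\ell_\xi, Q_t(tx))/t^n$ for every $x$. The surface case is proved in parallel, with Lemma \ref{Part:ext-cell} replacing Lemma \ref{Sob:ext-cell}; its proof uses the Congedo--Tamanini regularity Lemma \ref{fracture-lemma} to locate, inside each perforation, a sphere on which an auxiliary perimeter-minimiser of the partition is constant, so that the trivial constant extension across that sphere is $\HH$-cheap.

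Finally, the ergodic statement follows by the same perturbative scheme: \cite[Theorem 3.12]{CDMSZ2} provides the averaged formulas for $f^k_{\rm hom}, g^k_{\rm hom}$, and a dominated convergence argument transfers them to $f_{\rm hom}, g_{\rm hom}$. Indeed, the integrand $m^{1,p}_{F^k(\om)}(\ell_\xi, Q_t(0))/t^n$ is uniformly bounded by $c_2(1+|\xi|^p)$ (test with $u=\ell_\xi$), and similarly for the surface problem; combining the uniform sandwich estimate above with monotonicity in $k$ then permits the exchange of $\lim_t$, $\lim_k$, and $\int_\Om \,dP$, yielding the averaged formulas for $f_{\rm hom}, g_{\rm hom}$.
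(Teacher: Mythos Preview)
Your overall scheme---perturb to coercive $f^k,g^k$, apply \cite[Theorem~3.12]{CDMSZ2}, take monotone limits in $k$, and match these to the degenerate cell formulas via the extension lemmas---is exactly the paper's, and the easy inequality and the ergodic conclusion are handled correctly.

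There is, however, a genuine gap in the reverse inequality for the volume integrand, at the boundary perforations. You assert that $\tilde u_t$ ``still has boundary datum $\ell_\xi$'' and that its gradient on \emph{all} of $K(\om)\cap Q_t(tx)$ is controlled by $\|\nabla u_t\|_{L^p(Q_t(tx)\setminus K(\om))}$; these two claims are incompatible. If you apply Lemma~\ref{Sob:ext-cell} only in perforations strictly inside $Q_t(tx)$ and leave $u_t$ untouched in the boundary ones, the datum is preserved but you have \emph{no} bound on $\nabla u_t$ there: $F(\om)$ ignores $K(\om)$, so a near-minimiser may carry arbitrarily large gradient inside the boundary perforations. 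If instead you extend $u_t$ by $\ell_\xi$ outside $Q_t(tx)$ and apply the lemma in all perforations, the gradient is controlled but the boundary datum is destroyed on $\partial Q_t(tx)\cap K(\om)$. Your appeal to Theorem~\ref{stochastic-extension-lemma-SBV} does not help: its $\HH(\partial A)$ term is the cost of a \emph{jump} created on $\partial A$, admissible for $GSBV$ extensions but forbidden for the $W^{1,p}$ competitors in $m^{1,p}_{F^k(\om)}$. The paper closes this gap by extending in all perforations and then restoring the datum via a cut-off $w=\varphi\,\tilde u+(1-\varphi)\ell_\xi$ in the layer $Q_t\setminus Q_{t-4(r_*+\delta)}$; showing that $t^{-n}\int_{Q_t\setminus Q_{t'}}|Dw|^p\,dy\to 0$ requires a Poincar\'e inequality on a covering of the layer by cubes of side $2(r_*+\delta)$, using that each such cube meets $\partial Q_t$ on a set of $\HH$-measure at least $\delta^{n-1}$ where $\tilde u=\ell_\xi$ (this is where the $2\delta$-separation is used). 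In particular your displayed sandwich estimate does \emph{not} hold uniformly in $t$: the correct bound carries an extra $o(1)$ term as $t\to\infty$. The surface case likewise needs an explicit construction at the boundary perforations---the paper fills each with the constant $0$ or $1$ matching the adjacent half-space, so that the extra jump sits on $\partial B_j\subset K(\om)$ and is seen by $G^k$ only through the factor $1/k$---rather than a bare appeal to ``lower-order terms''.
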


\begin{proof}
For $k\in \N^*$ we set $f^k(\omega,x,\xi):=a^k(\omega,x)f(\omega,x,\xi)$ and $g^k(\omega,x,\nu):=a^k(\omega,x)g(\omega,x,\nu)$, where 
\begin{equation}
		a^{k}(\omega,x)
		:= \begin{cases}
		1 & \mbox{if} \ x \in \R^n \setminus K(\omega), \\
		\frac{1}{k} & \mbox{if} \ x \in K(\omega),
		\label{ak}
		\end{cases}
	\end{equation}
and consider the coercive functionals $F^k(\om), G^k(\om): \times L^1_{\rm{loc}}(\R^n) \times \mathcal{A} \longrightarrow [0,+\infty]$ defined as
\begin{equation*}
		F^k(\omega)(u,A)
		\displaystyle
		:= \begin{cases}
			\displaystyle
			\int_{A}f^k\left(\omega,x,\nabla u\right)dx & \mbox{if} \ u_{|A} \in W^{1,p}(A), \\
			+\infty & \mbox{otherwise,}
		\end{cases}
	\end{equation*}
	and
	\begin{equation*}
G^k(\omega)(u,A)
		\displaystyle
		:= \begin{cases}
			\displaystyle
			\int_{S_u\cap A}g^k\left(\omega,x,\nu_u\right)\, d\HH & \mbox{if} \ u_{|A} \in GSBV^p(A), \\
			+\infty & \mbox{otherwise.}
		\end{cases}
			\end{equation*}	
Moreover, we denote with $m^{1,p}_{F^k(\om)}$ and $m^{\rm pc}_{G^k(\om)}$ the corresponding minimisation problems as in \eqref{emme} and \eqref{emmeG}, respectively. 

For every fixed $k\in \N^*$ the functions $f^k$ and $g^k$ satisfy the assumptions of \cite[Theorem 3.12]{CDMSZ2}. Hence we can deduce the existence of a set $\Omega^k \subset \Omega$, with $\Omega^k \in \mathcal T$ and  $P(\Omega^k)=1$, such that for every $\omega \in \Omega^k$ and for every $x, \xi\in \R^n, \nu \in \mathbb S^{n-1}$ it holds
\begin{equation}\label{fhom-k}
		\lim_{t \rightarrow +\infty}
		\dfrac{m^{1,p}_{F^{k}(\omega)}(\ell_{\xi}, Q_{t}(tx))}{t^n}=
		\lim_{t \rightarrow +\infty} 
		\dfrac{m^{1,p}_{F^{k}(\omega)}(\ell_{\xi}, Q_{t}(0))}{t^n}=:f^k_{\rm hom}(\om,\xi)
	\end{equation}
and 
	\begin{equation}\label{ghom-k}
		\lim_{t \rightarrow +\infty} 
		\dfrac{m_{G^{k}(\omega)}^{\rm pc}(u_{tx,1,\nu}, Q_{t}^\nu(tx))}{t^{n-1}}=
		\lim_{t \rightarrow +\infty} 
		\dfrac{m_{G^{k}(\omega)}^{\rm pc}(u_{0,1,\nu}, Q_{t}^\nu(0))}{t^{n-1}} =: g_{\rm hom}^{k}(\om,\nu). 
	\end{equation} 	
Furthermore, $f^k_{\rm hom}$ is $(\T \otimes \B^n)$-measurable while $g^k_{\rm hom}$ is $(\T\otimes \B(\Sph^{n-1}))$-measurable. 	
Now we set 
\begin{equation}\label{Omega-prime}
\Omega':=\bigcap_{k\in \N^*} \Omega^k;
\end{equation} 
clearly $\Omega'\in \mathcal T$, $P(\Omega')=1$, and for every $\omega\in \Omega'$ and every $k\in \N^*$, the limits in \eqref{fhom-k} and \eqref{ghom-k} exist.	
We note moreover that for every $\om\in \Om'$, $\xi\in \R^n$, and $\nu\in \Sph^{n-1}$ the sequences $f^k_{\rm hom}(\om,\xi)$ and $g_{\rm hom}^{k}(\om,\nu)$ are decreasing in $k$. Therefore, for every $\om\in \Om'$, $\xi\in \R^n$, and $\nu\in \Sph^{n-1}$ we define the functions $f_{\rm hom}$ and $g_{\rm hom}$ as follows:
\begin{equation}\label{def-fhom}
\lim_{k \to +\infty}f^k_{\rm hom}(\om,\xi)=\inf_{k\in \N^*} f^k_{\rm hom}(\om,\xi)=:f_{\rm hom}(\om,\xi) 
\end{equation}
and 
\begin{equation}\label{def-ghom}
\lim_{k \to +\infty}g^k_{\rm hom}(\om,\nu)=\inf_{k\in \N^*} g^k_{\rm hom}(\om,\nu)=:g_{\rm hom}(\om,\nu).
\end{equation}
By definition, we clearly have that $f_{\rm hom}$ is $(\T \otimes \B^n)$-measurable and $g_{\rm hom}$ is $(\T\otimes \B(\Sph^{n-1}))$-measurable. 
We now show that the functions $f_{\rm hom}$ and $g_{\rm hom}$ satisfy \eqref{f-hom} and \eqref{g-hom}, respectively.

\smallskip

For every $\om\in \Om'$, $x,\xi\in \R^n$, and $\nu\in \Sph^{n-1}$ set
\begin{align*}
&\overline{f}(\om, x, \xi) := \limsup_{t \rightarrow +\infty}\dfrac{m^{1,p}_{F(\omega)}(\ell_\xi,Q_{t}(tx))}{t^{n}},\\
&\underline f(\om, x, \xi) := \liminf_{t \rightarrow +\infty}\dfrac{m^{1,p}_{F(\omega)}(\ell_\xi,Q_{t}(tx))}{t^{n}},
\end{align*}
and 
\begin{align*}
&\overline{g}(\om, x,\nu) := \limsup_{t \rightarrow +\infty}\dfrac{m^{\rm pc}_{G(\omega)}(u_{tx,1,\nu},Q_{t}(tx))}{t^{n-1}},\\
&\underline{g}(\om, x,\nu) := \liminf_{t \rightarrow +\infty}\dfrac{m^{\rm pc}_{G(\omega)}(u_{tx,1,\nu},Q_{t}(tx))}{t^{n-1}}.
\end{align*}
Then, to conclude it is enough to show that 
\begin{equation}\label{claim-f}
\overline{f} =\underline{f} = f_{\rm hom}
\end{equation}
and 
\begin{equation}\label{claim-g}
\overline{g}=\underline{g} = g_{\rm hom},
\end{equation}
with $f_{\rm hom}$ and $g_{\rm hom}$ as in \eqref{def-fhom} and \eqref{def-ghom}, respectively. 
We prove the two claims above in two separate steps. 

\medskip

\textit{Step 1: Proof of \eqref{claim-f}.} By definition $0\leq f\chi_{A\setminus K(\om)}\leq f^k$ for every $k\in \N^*$, hence by the monotonicity of the integral we immediately deduce that $\overline{f}\leq f^k_{\rm hom}$ for every $k\in \N^*$. Therefore
\begin{equation}\label{stima-1-f}
\overline{f}(\om,x, \xi)\leq \inf_{k \in \N^*} f^k_{\rm hom}(\om,\xi)= f_{\rm hom}(\om,\xi),
\end{equation}
for every $\om\in \Om'$, $x,\xi\in \R^n$. 

We now show that $f_{\rm hom} \leq \underline f$. To this end let $t\gg1$, $\om\in \Om'$, $x\in \R^n$ and $\xi\in \R^n$ be fixed. For $\eta>0$ let $\hat u\in W^{1,p}(Q_t(tx))$ be such that $\hat u=\ell_\xi$ near $\partial Q_{t}(tx)$ and 
$$
F(\om)(\hat u,Q_t(tx))\leq m^{1,p}_{F(\om)}(\ell_\xi,Q_t(tx)) + \eta t^n.
$$ 
Since $\ell_\xi$ is a competitor for $m^{1,p}_{F(\om)}(\ell_\xi,Q_t(tx))$ we immediately get
\begin{align}\label{stima-u-hat}
\int_{Q_t(tx)\setminus K(\omega)}|D\hat u|^{p}\dy&\leq \frac{1}{c_1} F(\om)(\hat u, Q_t(tx))\nonumber \\
&\leq\frac{1}{c_1} \Big((m^{1,p}_{F(\om)}(\ell_\xi,Q_t(tx))+\eta t^n \Big) \leq \frac{1}{c_1}\Big(c_2(1+|\xi|^p) t^n+\eta t^n\Big).
\end{align}
Starting from $\hat u$ we now construct a competitor for $m^{1,p}_{F^k(\om)}(\ell_\xi,Q_t(tx))$. First of all, we extend $\hat u$ by setting $\hat u = \ell_\xi$ in $\R^n\setminus Q_t(tx)$. Now, let
$\mathcal J \subset \I$ denote the set of indices $j$ such that $ B(\theta_{j}(\omega),r_{j}(\omega)) \cap Q_t(tx)\neq \emptyset$. We clearly have 
\begin{equation*}
Q_t(tx)\subset Q_t(tx)\cup \bigcup_{j\in \mathcal J} B(\theta_j(\omega), r_j(\omega)+\delta).
\end{equation*}
For every $j\in \mathcal J$ we set $\hat u_j:=\hat{u}_{|A_j(\omega)}$, where $A_j(\om)$ denotes the open annulus $B_{r_j(\om), r_j(\om)+\delta}(\theta_j(\om))$. By applying Lemma \ref{Sob:ext-cell} in every $A_j(\omega)$ we deduce the existence of an extension operator $T_{j}^\omega : W^{1,p}(A_j(\omega))\longrightarrow W^{1,p}(B(\theta_j(\omega),r_j(\omega)+\delta)$ and a constant $c>0$ independent of $j$ and $\omega$, such that
	\begin{equation*}
		\|D\big(T_{j}^\omega\hat{u}_j\big)\|_{L^{p}(B(\theta_{j}(\omega),r_{j}(\omega)+\delta)}
		\leq c\|D\hat{u}_j\|_{L^p(A_j(\omega))}.
	\end{equation*}  
We then define the function $\tilde{u} : \R^n \rightarrow \mathbb{R}$ as follows
	\begin{equation*}
		\tilde{u} 
		= \sum_{j \in \mathcal J}(T_{j}^\omega\hat{u}_j)\chi_{B(\theta_{j}(\omega),r_{j}(\omega)+\delta)} + \hat u\, \chi_{Q_t(tx)\setminus K^\delta(\omega)},
	\end{equation*}
where 
$$
K^\delta(\omega):= \bigcup_{j\in \I}  B(\theta_j(\omega), r_j(\omega)+\delta).
$$
By construction $\tilde u_{|Q_t(tx)} \in W^{1,p}(Q_t(tx))$. Moreover,
$$
\|D \tilde u\|_{L^p(Q_t(tx))} \leq c \|D \hat u\|_{L^p(Q_t(tx)\setminus K(\om))},
$$
therefore from \eqref{stima-u-hat} we deduce that
\begin{equation}\label{stima-u-hat-2}
\int_{Q_t(tx)}|D\tilde u|^{p}\dy\leq  c\int_{Q_t(tx) \setminus K(\omega)}|D\hat u|^{p}\dy \leq \frac{c}{c_1}\Big(c_2(1+ |\xi|^p) t^n+\eta t^n\Big).
\end{equation}
We note that in general the function $\tilde u$ does not coincide with $\ell_\xi$ in a neighbourhood of $\partial Q_{t}(tx)$, since we might have altered the boundary value in the perforations intersecting $\partial Q_t(tx)$. We then need to further modify $\tilde u$ in a way such that it attains the boundary datum. To this aim, let $\varphi \in C^\infty_0(Q_{t}(tx))$ be a cut-off function between $Q_{t-4(r_\ast+\delta)}(tx)$ and $Q_{t}(tx)$\ie $0\leq \varphi\leq 1$, $\varphi\equiv 1$ in $Q_{t-4(r_\ast+\delta)}(tx)$, $\varphi\equiv 0$ in $\R^n \setminus Q_{t}(tx)$, and $\|D\varphi\|_{\infty}\leq c$, with $c=c(n, r_\ast, \delta)>0$. 
Set 
$$
w:= \varphi \,\tilde u+(1-\varphi) \ell_\xi;
$$
clearly $w\in W^{1,p}(Q_{t}(tx))$, and $w=\ell_\xi$ in a neighbourhood of $\partial Q_{t}(tx)$. 
We now claim that 
\begin{equation}\label{eq:Dw0}
\lim_{t\to +\infty} \, \frac{1}{t^n} \int_{Q_t(tx)\setminus Q_{t-4(r_\ast+\delta)}(tx)}|Dw|^p dy = 0.
\end{equation}
To ease the notation, in what follows we set $t':=t-4(r_\ast+\delta)$. 
We clearly have
\begin{align}\label{eq:Dw}
\int_{Q_t(tx)\setminus Q_{t'}(tx)}|Dw|^p dy &\leq c\int_{Q_t(tx)\setminus Q_{t'}(tx)}|D\tilde u|^p dy \nonumber\\
&+ c \int_{Q_t(tx)\setminus Q_{t'}(tx)}|\tilde u- \ell_\xi|^p dy + c |\xi|^p t^{n-1}.
\end{align}
We now cover $Q_t(tx)\setminus Q_{t'}(tx)$ with a finite number of possibly overlapping cubes with side-length $2(r_\ast+\delta)$, having one face on the boundary $\partial Q_t(tx)$. Thus we
write
$$
Q_t(tx)\setminus Q_{t'}(tx) = \bigcup_{\sigma\in \mathcal{S}} Q^\sigma_{2(r_\ast+\delta)},
$$
where $Q^\sigma_{2(r_\ast+\delta)}:=\sigma+Q_{2(r_\ast+\delta)}$ and $\mathcal{S}\subset \R^n$ is a finite set of translation vectors 
such that the volume of this covering is asymptotically equal to the volume of $Q_t(tx)\setminus Q_{t'}(tx)$, for $t\to +\infty$.

We now apply the Poincar\'e inequality to the function $\tilde u- \ell_\xi$ in $Q_t(tx)\setminus Q_{t'}(tx)$. 
To do so we preliminarily observe that for every $\sigma\in \mathcal S$ it holds
\begin{equation}\label{capacity}
\HH\left(\partial Q_{2(r_\ast+\delta)}^\sigma\cap \{\tilde u = \ell_\xi\}\right)\geq \delta^{n-1}.
\end{equation}
This is clearly true if $\partial Q_{2(r_\ast+\delta)}^\sigma\cap K(\omega)=\emptyset$, since in that case $\tilde u=\ell_\xi$ on the whole face $\partial Q_{2(r_\ast+\delta)}^\sigma\cap \partial Q_t(tx)$, whose $\HH$-measure is larger than $\delta^{n-1}$. If instead $\partial Q_{2(r_\ast+\delta)}^\sigma\cap K(\omega)\neq\emptyset$, since each ball in $K(\omega)$ has diameter smaller than $2r_\ast$ and is separated from any other ball by a distance which is at least $2\delta$, inequality \eqref{capacity} holds in this case as well. Therefore 
the Poincar\'e inequality applied in every cube $Q^\sigma_{2(r_\ast+\delta)}$ gives 
\begin{equation*}
\int_{Q^\sigma_{2(r_\ast+\delta)}}|\tilde u- \ell_\xi|^p dy \leq  C\int_{Q^\sigma_{2(r_\ast+\delta)}}|D\tilde u- \xi|^p dy,
\end{equation*}
where $C=C(n,p,\delta,r_\ast)>0$ is independent of $\sigma$. Hence by adding up all the cubes $Q^\sigma_{2(r_\ast+\delta)}$, with $\sigma \in \mathcal{S}$, we get
\begin{equation}\label{poincare}
\int_{Q_t(tx)\setminus Q_{t'}(tx)}|\tilde u- \ell_\xi|^p dy \leq C \int_{Q_t(tx)\setminus Q_{t'}(tx)}|D\tilde u- \xi|^p dy.
\end{equation}
Then, gathering \eqref{eq:Dw} and \eqref{poincare} yields 
$$
\int_{Q_t(tx)\setminus Q_{t'}(tx)}|Dw|^p dy \leq c\int_{Q_t(tx)\setminus Q_{t'}(tx)}|D\tilde u|^p dy +c |\xi|^p t^{n-1}.
$$
Hence to prove \eqref{eq:Dw0} it is enough to show that 
$$
\lim_{t\to +\infty} \frac{1}{t^n}\int_{Q_t(tx)\setminus Q_{t'}(tx)}|D\tilde u|^p dy =0.
$$
The latter is a consequence of the equality 
$$
 \frac{1}{t^n}\int_{Q_t(tx)\setminus Q_{t'}(tx)}|D\tilde u|^p dy = \int_{Q_1(x)\setminus Q_{1-\frac{4(r_\ast+\delta)}t}(x)}|D v|^p dz,
$$
where $v(z):= \frac1t \tilde u(tz)$ for every $z\in Q_1(x)$. In fact, by \eqref{stima-u-hat-2} we have  
\begin{equation*}
\int_{Q_1(x)}|Dv|^{p}d z = \frac{1}{t^n}\int_{Q_t(tx)}|D\tilde u|^{p}\dy\leq  \frac{c}{c_1} \Big(c_2(1+ |\xi|^p)+\eta\Big),
\end{equation*}
thus
$$
\lim_{t\to +\infty} \int_{Q_1(x)\setminus Q_{1-\frac{4(r_\ast+\delta)}t}(x)}|D v|^p dz=0,
$$
by the absolute continuity of the Lebesgue integral.

Since $w$ is a competitor for $m^{1,p}_{F^k(\om)}(\ell_\xi, Q_{t}(tx))$, by invoking \eqref{stima-u-hat-2} we find
\begin{align*}
	 \frac{m^{1,p}_{F^k(\om)}(\ell_\xi, Q_{t}(tx))}{t^n} 
	 &\leq \dfrac{1}{t^{n}} F^k(\omega)(w,Q_{t}(tx))\\
	& \leq \dfrac{1}{t^{n}}F(\om)(\hat u, Q_{t'}(tx)) + \frac{c_2}{k t^n} \int_{Q_{t}(tx)}(1+ |D\tilde u|^{p}) \dy \\
	 &\quad +\dfrac{c_2}{t^{n}}\int_{Q_{t}(tx)\setminus Q_{t'}(tx)}\!\!\!\!(1+|Dw|^{p}) \dy
	\\
	& \leq \dfrac{1}{t^{n}}F(\om)(\hat u, Q_{t}(tx))+  \frac{c}{k}(1+|\xi|^p)
	 +\dfrac{c_2}{t^{n}}\int_{Q_{t}(tx)\setminus Q_{t'}(tx)}\!\!\!\!(1+|Dw|^{p}) \dy\\
	& \leq  \dfrac{m^{1,p}_{F}(\omega)(\ell_\xi,Q_{t}(tx))}{t^{n}} +\eta +  \frac{c}{k}(1+|\xi|^p)
	 +\dfrac{c_2}{t^{n}}\int_{Q_{t}(tx)\setminus Q_{t'}(tx)}\!\!\!\!(1+|Dw|^{p}) \dy.
\end{align*} 
Therefore, by and \eqref{eq:Dw0}, passing to the liminf as $t \to +\infty$ we get 
$$
f^k_{\rm hom}(\om,\xi) \leq 
\underline{f}(\om,x, \xi)+ \eta+ \frac{c}{k}(1+|\xi|^p),
$$
for every $\om \in \Om'$, $x,\xi\in \R^n$, and $k\in \N^*$.
Thus letting $k \to +\infty$ yields
\begin{equation}\label{stima-2-f}
f_{\rm hom}(\om, \xi)= \inf_{k \in \N^*} f^k_{\rm hom}(\om,\xi) \leq 
\underline{f}(\om, x, \xi) +\eta
\end{equation}
for every $\om\in \Om'$ and $x,\xi\in \R^n$. Hence, by the arbitrariness of $\eta>0$, gathering \eqref{stima-1-f} and \eqref{stima-2-f} eventually gives \eqref{claim-f} and thus \eqref{f-hom}.

\medskip

\textit{Step 2: Proof of \eqref{claim-g}.} By definition $0 \leq g\chi_{A\setminus K(\om)}\leq g^k$ for every $k\in \N^*$, hence by the monotonicity of the integral we immediately deduce that $\overline{g}\leq g^k_{\rm hom}$ for every $k\in \N^*$. Therefore
\begin{equation}\label{stima-1-g}
\overline{g}(\om, x, \nu)\leq \inf_{k \in \N^*} g^k_{\rm hom}(\om, \nu)= g_{\rm hom}(\om, \nu),
\end{equation}
for every $\om\in \Om'$, $x\in \R^n$ and $\nu\in \Sph^{n-1}$. 

We now show that $g_{\rm hom} \leq \underline g$. To this end let $t\gg1$, $\om\in \Om'$, $x\in \R^n$ and $\nu\in \Sph^{n-1}$ be fixed. For $\eta>0$ let $\hat u\in \mathcal P(Q^\nu_t(tx))$ be such that $\hat u= u_{tx,1,\nu}$ near $\partial Q^\nu_{t}(tx)$ and 
\begin{equation}\label{min:hatu}
G(\om)(\hat u,Q^\nu_t(tx)) \leq m^{\rm pc}_{G(\om)}(u_{tx,1,\nu},Q^\nu_t(tx)) + \eta t^{n-1}.
\end{equation}
Since $u_{tx,1,\nu}$ is a competitor for $m^{\rm pc}_{G(\om)}(u_{tx,1,\nu},Q^\nu_t(tx))$, we immediately get
\begin{align}\label{stima-u-hat-surf}
\frac{\mathcal H^{n-1}(S_{\hat u}\cap (Q_{t}^{\nu}(tx) \setminus K(\omega)))}{t^{n-1}}&\leq \dfrac{G(\om)(\hat u,Q^\nu_t(tx))}{c_3 t^{n-1}}\nonumber \\
&\leq 
\frac{m^{\rm pc}_{G(\om)}(u_{tx,1,\nu},Q^\nu_t(tx))}{c_3t^{n-1}} + \frac{\eta}{c_3} \leq \frac{c_4+\eta}{c_3}.
\end{align}
We now modify $\hat u$ in order to obtain a competitor for $m^{\rm pc}_{G^k(\om)}(u_{tx,1,\nu},Q^\nu_t(tx))$. We preliminarily extend $\hat u$ to the whole $\R^n$ by setting $\hat u = u_{tx,1,\nu}$ 
in $\R^n \setminus Q^\nu_t(tx))$. Now, we denote with $\mathcal J \subset \I$ the set of indices $j$ such that $B(\theta_{j}(\omega),r_{j}(\omega))  \cap Q^\nu_t(tx)\neq \O$. 
For each $j\in \mathcal J$ we set $\hat u_{j}:=\hat{u}_{|A_j(\om)}$, with $A_j(\om):=B_{r_j(\om), r_j(\om)+\delta}(\theta_j(\om))$. 

We divide the proof into three substeps.

\smallskip

\textit{Substep 2.1: Extension of $\hat u$ in the inner perforations.} Let $\mathcal J_I\subset\mathcal J$ denote the set of indices $j$ such that $ B(\theta_{j}(\omega),r_{j}(\omega)+\delta) \subset Q_{t}^{\nu}(tx)$. By Lemma \ref{Part:ext-cell} there exists an extension $v_j:= T_j \hat u_j\in \mathcal{P}(B(\theta_{j}(\omega),r_{j}(\omega)+\delta))$ of $\hat u_j$ whose jump set in $B(\theta_{j}(\omega),r_{j}(\omega)+\delta)$ is controlled, in measure, by the jump set of $\hat u_j$ (and hence by the jump of $\hat u$ in $A_j(\om)$).

\smallskip

\textit{Substep 2.2: Modification of $\hat u$ in the boundary perforations.} Let $ \mathcal J_B:= \mathcal J\setminus \mathcal J_I$, and let $j\in \mathcal J_B$. In order to preserve the boundary conditions we need to distinguish between two cases. We say that $j\in \mathcal J^+_B$ if $B(\theta_{j}(\omega),r_{j}(\omega))\cap \partial( Q_t^\nu(tx) \cap \{(y-tx)\cdot \nu>0\}) \neq \emptyset$, and set $\mathcal J_B^-:=\mathcal J_B\setminus \mathcal J_B^+$.

If $j\in \mathcal J_B^+$ we set 
\begin{equation*}
w_{+,j}:= \begin{cases} 
			\displaystyle
			\hat{u}_j
			& \mbox{in } A_j(\om), \cr 
			1
			& \mbox{in } \overline B(\theta_{j}(\omega),r_{j}(\omega)),
	       \end{cases}
\end{equation*}	
	while if $j\in \mathcal J_B^-$ we set 
\begin{equation*}
w_{-,j}:= \begin{cases} 
			\displaystyle
			\hat{u}_j
			& \mbox{in } A_j(\om), \cr 
			0
			& \mbox{in } \overline B(\theta_{j}(\omega),r_{j}(\omega)).
	     \end{cases}
\end{equation*}			
Clearly, for every $j\in \mathcal J_B$, the additional jump created by $w_{\pm,j}$ is controlled by the perimeter of the boundary perforations $B(\theta_{j}(\omega),r_{j}(\omega))$. Since the perforations in $K(\omega)$ are pairwise disjoint (and in particular this is true for the boundary perforations), the total additional jump due to the boundary perforations is controlled by the perimeter of $Q_t^\nu(tx)$\ie it is equal to $c\, t^{n-1}$ for some $c>0$ independent of $t$.

\medskip

\textit{Substep 2.3: Adding up all the contributions.} We now denote with $\tilde u\in\mathcal{P}(Q_t^\nu(tx))$ the function defined as 
\begin{equation*}
\tilde u:=
\begin{cases}
\hat u &\quad \textrm{in }\, Q_t^\nu(tx)\setminus K^\delta(\om),\\ \smallskip
v_{j} &\quad \textrm{in }\, B(\theta_{j}(\omega),r_{j}(\omega)+\delta), j\in \mathcal{J}_{I}\\ \smallskip
w_{\pm,j} &\quad \textrm{in }\, B(\theta_{j}(\omega),r_{j}(\omega)+\delta)\cap Q_t^\nu(tx), j\in \mathcal{J}_{B}.
\end{cases}
\end{equation*}
By construction the function $\tilde u$ satisfies the following properties:	

\smallskip

	\begin{enumerate}
		\item[a.] $\tilde u = u_{tx,1,\nu}$ in a neighbourhood of $\partial Q^\nu_t(tx)$;
		
		\smallskip
		
		\item[b.] $\mathcal{H}^{n-1}(S_{\tilde u} \cap (Q_t^{\nu}(tx) \setminus K(\omega))) \leq  \mathcal{H}^{n-1}(S_{\hat u} \cap (Q_t^{\nu}(tx) \setminus K(\omega)))$;
		
		\smallskip
		
		\item[c.] $\mathcal{H}^{n-1}(S_{\tilde u} \cap Q_t^{\nu}(tx)) \leq c (\mathcal{H}^{n-1}(S_{\hat u} \cap (Q_t^{\nu}(tx) \setminus K(\omega))) + t^{n-1})$, for some $c>0$ independent of $t$.
			\end{enumerate}

\smallskip

\noindent
Since $\tilde u$ is a competitor for $m^{\rm pc}_{{G^k}(\om)}(u_{tx,1,\nu},Q^\nu_t(tx))$, by combining b., c., and \eqref{stima-u-hat-surf} we get
	\begin{align*}
		\dfrac{m_{G^{k}(\omega)}^{\textrm{pc}}(u_{tx,1,\nu},Q_{t}^{\nu}(tx))}{t^{n-1}} 
		&\leq \dfrac{1}{t^{n-1}}{G^k}(\om)(\tilde u, Q_t^\nu(tx))\\ 
		&\leq \dfrac{1}{t^{n-1}}G (\om)(\tilde u, Q_t^\nu(tx))
		+ \dfrac{c_4}{k\,t^{n-1}}\mathcal{H}^{n-1}({Q_{t}^{\nu}(tx) \cap S_{\tilde{u}}}) \\
		&\leq \dfrac{1}{t^{n-1}} {G} (\om)(\hat u, Q_t^\nu(tx)) + \frac{c}{k}\\
		& \leq  \dfrac{m^{\rm pc}_{{G}(\omega)}(u_{tx,1,\nu},Q_{t}^{\nu}(tx))}{t^{n-1}} + \eta+ \frac{c}{k},
 \end{align*}
where we have also used \eqref{min:hatu}. Therefore passing to the liminf as $t \to +\infty$ we get 
$$
g^k_{\rm hom}(\om, \nu) \leq 
\underline{g}(\om, x, \nu)+ \eta+ \frac{c}{k},
$$
for every $\om\in \Om'$, $x\in\R^n$, $\nu \in \Sph^{n-1}$, and $k\in \N^*$.
Thus finally letting $k \to +\infty$ and then $\eta\to 0$ yields
\begin{equation}\label{stima-2-g}
g_{\rm hom}(\om, \nu):= \inf_{k \in \N^*} g^k_{\rm hom}(\om, \nu) \leq 
\underline{g}(\om, x, \nu),
\end{equation}
for every $\om \in \Om'$, $x \in \R^n$ and $\nu \in \Sph^{n-1}$. Hence gathering \eqref{stima-1-g} and \eqref{stima-2-g} eventually gives \eqref{claim-g} and thus \eqref{g-hom}.

\medskip

If $f$, $g$, and $K$ are stationary with respect to an ergodic group of $P$-preserving transformations, then \cite[Theorem 3.12]{CDMSZ2} 
ensures that $f_{\rm hom}^k$ and $g^k_{\rm hom}$ (and hence $f_{\rm hom}$ and $g_{\rm hom}$) are independent of $\om$. 
Then, the thesis follows by integrating \eqref{claim-f} and \eqref{claim-g} over $\Om$, by the Dominated Convergence Theorem. 
\end{proof}


\begin{rem}[$\Gamma$-convergence of the perturbed functionals]\label{G-conv-Eeps-k} Let $f$, $g$ and $D$ be as in Theorem \ref{en-density_vs}. 
For $k\in \N^*$ we set $f^k(\omega,x,\xi):=a^k(\omega,x)f(\omega,x,\xi)$ and $g^k(\omega,x,\nu):=a^k(\omega,x)g(\omega,x,\nu)$, where $a^k$ is 
defined as in \eqref{ak}. For $\e>0$ and $k\in \N^\ast$, let $E_\e^k(\om) \colon L^1_{\rm loc}(\R^n)\times \mathcal{A} \longrightarrow (0,+\infty]$ be the functionals defined as
\begin{equation*}
E^k_\e(\omega)(u,A)
		\displaystyle
		:= \begin{cases}
			\displaystyle
			\int_{A}f^k\left(\omega,\frac{x}{\e}, \nabla u\right) dx + \int_{S_u\cap A}g^k\left(\omega,\frac{x}{\e},\nu_u\right)\, d\HH & \mbox{if} \ u_{|A} \in GSBV^p(A), \\
			+\infty & \mbox{otherwise.}
		\end{cases}
\end{equation*}
If $\Om'$ is the set in the statement of Theorem \ref{en-density_vs} (defined as in \eqref{Omega-prime}), we deduce from \cite[Theorem 3.13]{CDMSZ2} that for every $\om\in \Om'$ and $k\in \N^\ast$ the functionals $E^k_\e(\om)$ $\Gamma$-converge to the homogeneous free-discontinuity functional 
$E_{\rm hom}^k(\om) \colon L^1_{\rm loc}(\R^n)\times \mathcal{A} \longrightarrow (0,+\infty]$ given by
\begin{equation}\label{E_hom-k}
E^k_{\rm hom}(u,A)
		\displaystyle
		:= \begin{cases}
			\displaystyle
			\int_{A}f^k_{\rm hom}(\om,\nabla u)\dx + \int_{S_u\cap A}g^k_{\rm hom}(\om,\nu_u)\, d\mathcal H^{n-1} & \mbox{if} \ u_{|A} \in GSBV^p(A), \\
			+\infty & \mbox{otherwise,}
		\end{cases}
\end{equation}
where $f^k_{\rm hom}$ and $g^k_{\rm hom}$ are as in \eqref{fhom-k} and \eqref{ghom-k}, respectively.
\end{rem}

\begin{theo}[$\Gamma$-convergence]\label{theo:stochMS} Let $f$ and $g$ be stationary random volume and surface integrands, and let $D\subset \R^n$ be a random perforated domain as in Definition \ref{ass:random-domain}. Assume that the stationarity of $f$, $g$ and $D$ is satisfied with respect to the same group $(\tau_y)_{y\in \R^n}$ of $P$-preserving transformations on $(\Om,\T,P)$. Let $E_\e$ be as in \eqref{def:Eneps}, let $\Om'\in \mathcal T$ (with $P(\Om')=1$), $f_{\rm hom}$, and $g_{\rm hom}$ be as in Theorem \ref{en-density_vs}. Then, for every $\om\in \Om'$ and every $A\in \mathcal{A}$, the functionals $E_\e(\omega)(\cdot, A)$ $\Gamma$-converge in $L^1_{\rm{loc}}(\R^n)$ to the homogeneous functional $E_{\rm hom}(\om) \colon  L^{1}_{\rm loc}(\mathbb{R}^{n})\times \mathcal{A} \longrightarrow [0,+\infty]$ defined as
	\begin{equation}\label{Gamma-limit}
		E_{\rm hom}(u,A)
		\displaystyle
		:= \begin{cases}
		\displaystyle
		\int_{A}f_{\rm hom}(\om,\nabla u) \ dx 
		+\int_{A\cap S_{u}}g_{\rm hom}(\om,\nu_u)\ d\mathcal{H}^{n-1} 
		& \mbox{if} \ u_{|_A} \in GSBV^{p}(A), \\
		+\infty & \mbox{otherwise}.
		\end{cases}
	\end{equation}
Moreover, for every $\om \in \Om'$, $\xi, \xi_1, \xi_2\in \R^n$ and $\nu\in \Sph^{n-1}$ we have that
\begin{equation}\label{f-bds}
\tilde c_0 |\xi|^p\leq f_{\mathrm{hom}}(\om,\xi) \leq c_2(1+ |\xi|^p),
\end{equation}
and 
\begin{equation}\label{g-bds}
\tilde c_0 \leq g_{\mathrm{hom}}(\om,\nu) \leq  c_4,
\end{equation}
where $\tilde c_0=\tilde c_0(n,\delta)>0$, and $c_2$ and $c_4$ are as in $(f2)$ and $(g2)$. Further, there exists $L'>0$ such that
\begin{equation}\label{f-LIP}
|f_{\rm hom}(\om,\xi_1)-f_{\rm hom}(\om,\xi_2)| \leq L'\big(1+|\xi_1|^{p-1}+|\xi_2|^{p-1}\big)|\xi_1-\xi_2|.
\end{equation}

\end{theo}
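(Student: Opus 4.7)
The strategy is to compare $E_\e(\om)$ with the coercive perturbations $E_\e^k(\om)$ introduced in Remark \ref{G-conv-Eeps-k} and to pass to the limit first in $\e$ and then in $k$, exploiting the fact that $f^k_{\rm hom}(\om,\cdot)\searrow f_{\rm hom}(\om,\cdot)$ and $g^k_{\rm hom}(\om,\cdot)\searrow g_{\rm hom}(\om,\cdot)$ pointwise, by \eqref{def-fhom} and \eqref{def-ghom}. Since $a^k\leq 1$, we have $E_\e(\om)\leq E_\e^k(\om)$ pointwise; thus any recovery sequence for $E_\e^k(\om)$ at a given $u\in L^1_{\rm loc}(\R^n)$ with $u_{|A}\in GSBV^p(A)$ is automatically an admissible sequence for $E_\e(\om)$, yielding $\Gamma\text{-}\limsup_{\e\to 0}E_\e(\om)(u,A)\leq E_{\rm hom}^k(\om)(u,A)$. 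Monotone convergence in $k$, combined with an Attouch-type diagonal argument, upgrades this to $\Gamma\text{-}\limsup_{\e\to 0}E_\e(\om)(u,A)\leq E_{\rm hom}(\om)(u,A)$.

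\medskip

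The $\Gamma$-liminf inequality is the principal obstacle. Given $u_\e\to u$ in $L^1_{\rm loc}(\R^n)$ with $\sup_{\e} E_\e(\om)(u_\e,A)<+\infty$, truncation at level $M$ (which does not increase the energy) reduces us to the case $u_\e\in L^\infty$ with uniform bound; we will let $M\to\infty$ at the end. The extension Theorem \ref{stochastic-extension-lemma-SBV} produces $\tilde u_\e\in SBV^p(A)\cap L^\infty(A)$ with $\tilde u_\e=u_\e$ on $A\setminus\e K(\om)$ and $MS^p(\tilde u_\e,A)\leq c\bigl(E_\e(\om)(u_\e,A)+\HH(\partial A)\bigr)$ uniformly bounded in $\e$. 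Ambrosio's Compactness Theorem then extracts a subsequence $\tilde u_\e\to \bar u$ in $L^1(A)$ with $\bar u\in SBV^p(A)$. To identify $\bar u=u$, one uses that $\tilde u_\e\chi_{A\setminus\e K(\om)}=u_\e\chi_{A\setminus\e K(\om)}$: the left-hand side converges weakly in $L^1(A)$ to $\bar u\, d(\om)$ by combining strong convergence of $\tilde u_\e$ with the Birkhoff weak$^*$ convergence $\chi_{A\setminus\e K(\om)}\stackrel{*}{\wto}d(\om)$, while the right-hand side converges to $u\, d(\om)$ by Remark \ref{r:weak-conv}; since Proposition \ref{property:d} ensures $d(\om)>0$ $P$-almost surely, we conclude $\bar u=u$ almost everywhere in $A$. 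The key estimate is then
\[
E_\e^k(\om)(\tilde u_\e,A)=E_\e(\om)(u_\e,A)+\frac{1}{k}\!\!\int_{A\cap\e K(\om)}\!\!\!\!f\Bigl(\om,\tfrac{x}{\e},\nabla \tilde u_\e\Bigr)dx+\frac{1}{k}\!\!\int_{S_{\tilde u_\e}\cap A\cap\e K(\om)}\!\!\!\!\!g\Bigl(\om,\tfrac{x}{\e},\nu_{\tilde u_\e}\Bigr)d\HH,
\]
whose remainder is bounded by $C\,MS^p(\tilde u_\e,A)/k\leq C'/k$ uniformly in $\e$, thanks to $(f2)$ and $(g2)$. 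The $\Gamma$-liminf inequality for $E_\e^k$ applied at $\tilde u_\e\to u$ gives
\[
\liminf_{\e\to 0}E_\e(\om)(u_\e,A)+\frac{C'}{k}\geq \liminf_{\e\to 0} E_\e^k(\om)(\tilde u_\e,A)\geq E_{\rm hom}^k(\om)(u,A),
\]
and letting $k\to\infty$ (monotone convergence $E_{\rm hom}^k\to E_{\rm hom}$) and then $M\to\infty$ closes the argument.

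\medskip

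The bounds on $f_{\rm hom}$ and $g_{\rm hom}$ will be read off from the homogenisation formulas of Theorem \ref{en-density_vs}. The upper bounds are obtained by testing $m^{1,p}_{F(\om)}(\ell_\xi, Q_t(0))$ with $u=\ell_\xi$ and $m^{\rm pc}_{G(\om)}(u_{0,1,\nu}, Q_t^\nu(0))$ with $u=u_{0,1,\nu}$, directly invoking $(f2)$ and $(g2)$. The lower bound on $f_{\rm hom}$ uses Lemma \ref{Sob:ext-cell} applied perforation-by-perforation to a competitor $u$, together with the boundary-cutoff modification from Step 1 of the proof of Theorem \ref{en-density_vs}, to produce $\tilde u\in W^{1,p}(Q_t)$ with $\tilde u=\ell_\xi$ near $\partial Q_t$ and $\|\nabla \tilde u\|_{L^p(Q_t)}\leq c\|\nabla u\|_{L^p(Q_t\setminus K(\om))}$; Jensen's inequality then forces $\int_{Q_t}|\nabla\tilde u|^p\geq t^n|\xi|^p$, leading to $f_{\rm hom}(\om,\xi)\geq(c_1/c)|\xi|^p$. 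The analogous argument for $g_{\rm hom}$ uses Lemma \ref{Part:ext-cell} and the isoperimetric remark that any partition $\tilde u\in\mathcal P(Q_t^\nu)$ coinciding with $u_{0,1,\nu}$ near $\partial Q_t^\nu$ must separate the two opposite faces of $Q_t^\nu$, hence $\HH(S_{\tilde u}\cap Q_t^\nu)\geq t^{n-1}$, yielding $g_{\rm hom}(\om,\nu)\geq c_3/c$. Finally, the Lipschitz bound \eqref{f-LIP} follows from $(f3)$ by a standard shift argument: writing an almost-minimiser $u$ for $m^{1,p}_{F(\om)}(\ell_{\xi_2}, Q_t)$ as $v+\ell_{\xi_2-\xi_1}$ with $v$ admissible for $m^{1,p}_{F(\om)}(\ell_{\xi_1}, Q_t)$, applying $(f3)$ inside the integrand and using H\"older together with the $p$-growth produces the estimate after division by $t^n$.
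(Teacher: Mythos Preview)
Your argument for the $\Gamma$-liminf and $\Gamma$-limsup is essentially the paper's argument: perturb to $E_\e^k$, use the extension theorem to pass from $u_\e$ to $\tilde u_\e$ with controlled $MS^p$, apply the known $\Gamma$-convergence of $E_\e^k$, and send $k\to\infty$ using monotone convergence. One small inaccuracy: truncation does \emph{not} decrease $E_\e$ in general, since $f(\om,\cdot,0)$ need not vanish; the correct estimate (which the paper uses) is $E_\e(\om)(u_\e^M,A)\le E_\e(\om)(u_\e,A)+c_2\,\mathcal L^n(A\cap\{|u_\e|>M\})$, and the extra term is sent to zero at the end via $u_\e\to u$ in $L^1$. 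Also, no Attouch diagonal is needed for the limsup: since $\Gamma\text{-}\limsup_\e E_\e(\om)\le E_{\rm hom}^k(\om)$ holds for every $k$, taking the infimum in $k$ suffices.

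Where you genuinely diverge from the paper is in the \emph{bounds on $f_{\rm hom}$ and $g_{\rm hom}$} and in \eqref{f-LIP}. The paper does not go back to the cell formulas; instead it exploits the $\Gamma$-convergence just proved. For the lower bound on $f_{\rm hom}$ it takes a recovery sequence $u_\e\to\ell_\xi$, extends it via Theorem~\ref{stochastic-extension-lemma-SBV}, uses the interior estimate of Remark~\ref{stima:interna} and the lower semicontinuity of $MS^p$ on any $Q'\subset\subset Q$, and reads off $|\xi|^p\mathcal L^n(Q')\le \frac{c}{c_1\wedge c_3}f_{\rm hom}(\om,\xi)$. The same device with $u_{0,1,\nu}$ gives the lower bound on $g_{\rm hom}$. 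For \eqref{f-LIP} the paper simply observes that $f_{\rm hom}(\om,\cdot)$ is convex (as the volume density of a lower-semicontinuous scalar free-discontinuity functional) and combines convexity with the $p$-growth bounds. Your approaches---Jensen on the extended Sobolev competitor, the separating-hyperplane estimate for partitions, and the shift argument using $(f3)$---are all viable, but two of them need more care than you indicate. First, the boundary cut-off from Step~1 of Theorem~\ref{en-density_vs} does not yield $\|\nabla\tilde u\|_{L^p(Q_t)}\le c\|\nabla u\|_{L^p(Q_t\setminus K(\om))}$ exactly; it produces an additional $o(t^n)$ term, which is harmless after dividing by $t^n$ and letting $t\to\infty$, but should be stated. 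Second, and more seriously, for the surface lower bound you need $\tilde u=u_{0,1,\nu}$ near $\partial Q_t^\nu$ \emph{after} applying Lemma~\ref{Part:ext-cell}, and perforations crossing $\partial Q_t^\nu$ (especially those meeting the hyperplane $\{y\cdot\nu=0\}$) can spoil this; handling them requires an argument you have not supplied. The paper's route via the already-established $\Gamma$-limit and Remark~\ref{stima:interna} sidesteps all boundary-perforation bookkeeping.
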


\begin{proof}
In view of \eqref{def-fhom} and \eqref{def-ghom}, the Monotone Convergence Theorem yields
	\begin{equation}
		E_{\rm hom}(\om)(u,A) 
		= \inf_{k \in \N^*}E_{\rm hom}^{k}(\om)(u,A) 
		= \lim_{k\to +\infty}E_{\rm hom}^{k}(\om)(u,A)
		\label{approximation-ms}
	\end{equation}
for every $\om \in \Omega'$, $A\in\mathcal{A}$ and $u\in GSBV^{p}(A)$, where $E^k_{\rm hom}$ is as in \eqref{E_hom-k}.

We prove the $\Gamma$-convergence of $E_\e$ to $E_{\rm hom}$ in two steps.  

\medskip

\textit{Step 1: liminf-inequality.} Let $\om\in \Om'$ and $A\in \mathcal A$ be fixed. Let $u \in GSBV^p(A)$ and let $(u_\e)\subset L^1_{\rm loc}(\R^n)$ be a sequence satisfying $u_\e \to u$ strongly in $L^1(A)$ and $\sup_\e E_{\varepsilon}(\omega)(u_{\varepsilon},A)<+\infty$. Note in particular $(u_\e)\subset GSBV^p(A)$. For $M>0$ we consider the truncated function $u^M:=(u\wedge M)\vee(-M) \in GSBV^p(A) \cap L^\infty(A)$ and the truncated sequence $(u_\e^M) \subset GSBV^p(A) \cap L^\infty(A)$; clearly $(u_\e^M)$ converges to $u^M$ in $L^1(A)$ as $\e\to 0$.

Let $(\tilde u_\e) \subset SBV^p(A)\cap L^\infty(A)$ be the extension provided by Proposition \ref{prop:comp}, such that $\tilde u_\e = u_\e^M$ a.e. in $A\setminus \e K(\omega)$, and let $\tilde u\in SBV^p(A)\cap L^\infty(A)$ be such that (up to a subsequence) $\tilde u_\e \to \tilde u$ strongly in $L^1(A)$. Since the sequences  $(u_\e)$ and $(\tilde u_\e)$ coincide in $A\setminus \e K(\omega)$, we  deduce by Property \ref{property:d} 
that $\tilde u= u^M$ a.e. in $A$. Furthermore, \eqref{stochastic-uniform-bound-energy} gives 
$$
MS^p(\tilde u_{\e}, A)  \leq \frac{c(n,p,\delta,r_\ast)}{c_1\wedge c_3} \left(E_{\e}(\omega)(u^M_{\e}, A)+ \HH(\partial A)\right),
$$
and therefore we have
\begin{align*}
E^k_\e(\om)(\tilde u_\e, A) &\leq E_{\e}(\omega)(u^M_{\e}, A) +\frac{{c_2\vee c_4}}{k} MS^p (\tilde u_\e, A \cap \e K(\om)) + \frac{c_2}{k} \mathcal{L}^n(A \cap \e K(\om))
\\
& \leq \Big(1+\frac{c}{k}\Big) E_{\e}(\omega)(u^M_{\e}, A)+ \frac{c}{k} \HH(\partial A) +\frac{c_2}{k} \mathcal{L}^n(A),
\end{align*}
where $c=c(n,p,\delta,r_\ast)$. Then, by Remark \ref{G-conv-Eeps-k} we deduce that for every $\om\in \Om'$, $A \in \mathcal A$ and $k\in \N^*$  
\begin{align*}
E^k_{\rm hom}(\om)(u^M, A)&\leq \liminf_{\e \to 0} E^k_\e(\om)(\tilde u_\e, A) \\
&\leq \Big(1+\frac{c}{k}\Big) \liminf_{\e \to 0} E_{\e}(\omega)(u^M_{\e}, A)+ \frac{c}{k} \HH(\partial A) +\frac{c_2}{k} \mathcal{L}^n(A).
\end{align*}
By letting $k \to +\infty$ and using \eqref{approximation-ms}, we then get
\begin{equation}\label{linf:M}
E_{\rm hom}(\om)(u^M, A)= \lim_{k \to +\infty} E^k_{\rm hom}(\om)(u^M, A) \leq \liminf_{\e \to 0} E_{\e}(\omega)(u^M_{\e}, A),
\end{equation}
and hence the liminf-inequality is proved for the truncations, for every $M>0$. Now we observe that $E_\e$ decreases by truncations up to a quantifiable error, namely
$$
E_{\e}(\omega)(u^M_{\e}, A) \leq E_{\e}(\omega)(u_{\e}, A) + \int_{A\cap \{|u_\e|>M\}}\!\!\! f(\om,x,0) dx 
\leq E_{\e}(\omega)(u_{\e}, A) + c_2 \mathcal{L}^n(A\cap \{|u_\e|>M\}).
$$ 
Therefore, from \eqref{linf:M} we obtain the improved estimate 
\begin{align*}
E_{\rm hom}(\om)(u^M, A) &\leq \liminf_{\e \to 0}  \left(E_{\e}(\omega)(u_{\e}, A) + c_2 \mathcal{L}^n(A\cap \{|u_\e|>M\})\right)\\
& \leq \liminf_{\e \to 0}  E_{\e}(\omega)(u_{\e}, A) + c_2   \limsup_{\e \to 0} \mathcal{L}^n(A\cap \{|u_\e|>M\}).
\end{align*}
Since $u_\e \to u$ in $L^1(A)$ we have that  $\limsup_{\e\to 0}\mathcal{L}^n(A\cap \{|u_\e|>M\}) \leq \mathcal{L}^n(A\cap \{|u|>M\})$, and hence 
$$
E_{\rm hom}(\om)(u^M, A) \leq  \liminf_{\e \to 0}  E_{\e}(\omega)(u_{\e}, A) + c_2 \mathcal{L}^n(A\cap \{|u|>M\}).
$$
Finally, since $u^M \to u$ in $L^1(A)$ as $M \to +\infty$, the liminf-inequality follows by the lower semicontinuity of $E_{\rm hom}(\om)(\cdot, A)$. 

\medskip

\textit{Step 2: limsup-inequality.}
Let $\omega \in \Omega'$ and $A \in \mathcal{A}$ be fixed. Let $u \in GSBV^{p}(A)$; in view of Remark \ref{G-conv-Eeps-k} there exists  $(u_{\varepsilon}) \subset GSBV^{p}(A)$ such that $u_\e \to u$ in $L^1(A)$ and $\lim_{\e \to 0}E^k_\e(\om)(u_\e,A)=E^k_{\rm hom}(u,A)$. Then by the definition of $E_\e^k$ we have 
	\begin{equation*}
		E_{\rm hom}^{k}(u,A) 
		= \lim_{\varepsilon \to 0}E_{\varepsilon}^{k}(\omega)(u_{\varepsilon},A)
		\geq \limsup_{\varepsilon\rightarrow 0} E_{\varepsilon}(\omega)(u_{\varepsilon},A),
	\end{equation*}
for every $k\in \N^*$. 	
Then, letting $k\to +\infty$, from \eqref{approximation-ms} we finally deduce 
	\begin{equation*}
		E_{\rm hom}(u,A) 
		= \lim_{k \to +\infty} E_{\rm hom}^{k}(u,A) 
		\geq \limsup_{\varepsilon\to 0} E_{\varepsilon}(\omega)(u_{\varepsilon},A)
	\end{equation*}
and hence the limsup-inequality is proved.

\medskip

\textit{Step 3: Lower bounds on the limit integrands.} We start by proving the lower bound in \eqref{f-bds}. To do so, let $\om \in \Om'$ and $\xi\in \R^n$, and let $u_\e$ be a recovery sequence for $E_\e(\om)$ at $\ell_\xi$ in $Q$. With no loss of generality we can assume that the sequence is bounded in $L^\infty$. Moreover, let $T^\om_\e  u_\e$ denote the extension of $u_\e$ in $Q$ provided by Theorem \ref{stochastic-extension-lemma-SBV}; note that by Ambrosio's Compactness Theorem $T^\om_\e  u_\e$ converges in $L^1$, and since $T^\om_\e  u_\e = u_\e$ in $A\setminus \e K(\om)$, by Property \ref{property:d} we have that $T^\om_\e  u_\e \to \ell_\xi$ in $L^1$, up to a subsequence. By \cite[Theorem 4.7]{AFP}, for every $Q'\subset \subset Q$, we have 
\begin{align*}
MS^p(\ell_\xi, Q') &\leq \liminf_{\e\to 0} MS^p(T^\om_\e  u_\e, Q') \leq c \liminf_{\e\to 0} MS^p(u_\e, Q\setminus \e K(\om)) \\
& \leq \frac{c}{c_1\wedge c_3} \, \liminf_{\e\to 0} E_\e(\om)(u_\e, Q) = \frac{c}{c_1\wedge c_3} E_{\textrm{hom}}(\om)(\ell_\xi, Q),
\end{align*}
where we have also used Remark \ref{stima:interna}. In conclusion,
$$
\mathcal{L}^n(Q')|\xi|^p \leq \frac{c}{c_1\wedge c_3} f_{\textrm{hom}}(\om,\xi),
$$
which gives the lower bound in \eqref{f-bds} for $\tilde c_0:= \frac{c_1\wedge c_3}c$, by letting $Q'\nearrow Q$.

\smallskip 

For the proof of the lower bound in \eqref{g-bds} we proceed similarly. Let $\om \in \Om'$ and $\nu\in \mathbb{S}^{n-1}$, let $u_\e$ be a recovery sequence for $E_\e(\om)$ at $u_{0,1,\nu}$ in $Q^\nu$, and let $T^\om_\e  u_\e$ denote the extension of $u_\e$ in $Q^\nu$ provided by Theorem \ref{stochastic-extension-lemma-SBV}. Again by \cite[Theorem 4.7]{AFP} and by Remark \ref{stima:interna}, for every $Q'\subset \subset Q^\nu$, we have 
\begin{align*}
MS^p(u_{0,1,\nu}, Q') &\leq \liminf_{\e\to 0} MS^p(T^\om_\e  u_\e, Q') \leq c \liminf_{\e\to 0} MS^p(u_\e, Q^\nu\setminus \e K(\om)) \\
& \leq \frac{c}{c_1\wedge c_3} \, \liminf_{\e\to 0} E_\e(\om)(u_\e, Q^\nu) = \frac{c}{c_1\wedge c_3} E_{\textrm{hom}}(\om)(u_{0,1,\nu}, Q^\nu).
\end{align*}
In conclusion,
$$
\HH(S_{u_{0,1,\nu}}\cap Q') \leq \frac{c}{c_1\wedge c_3} g_{\textrm{hom}}(\om,\nu),
$$
which gives the lower bound in \eqref{g-bds} for $\tilde c_0$ defined above, by letting $Q'\nearrow Q^\nu$.

\medskip

\textit{Step 4: Upper bounds on the limit integrands.} The upper bound in \eqref{f-bds} follows immediately by taking, for $\om \in \Om'$ and $\xi\in \R^n$, 
the sequence $u_\e = \ell_\xi$ and by using the liminf inequality for $E_\e$ in $Q$ and the bound $(f2)$, since 
$$
c_2(1+|\xi|^p)\geq \liminf_{\e\to 0} E_\e(\om)(u_\e, Q) \geq E_{\textrm{hom}}(\om)(\ell_\xi, Q) = f_{\textrm{hom}}(\om,\xi). 
$$
The proof of the upper bound in \eqref{g-bds} is completely analogous.
\medskip

\textit{Step 5: Lipschitz continuity of $f_{\textrm{hom}}$.} Property \eqref{f-LIP} follows from the bounds in \eqref{f-bds} and from the convexity of $f_{\textrm{hom}}(\om,\cdot)$.
\end{proof}

\begin{rem}\label{weak:conv-rem}
In Theorem~\ref{theo:stochMS} the $L^1$-topology can be replaced by the weak convergence in \eqref{weak-conv}.
\end{rem}

In view of Remark~\ref{weak:conv-rem}, as a corollary of Theorem~\ref{theo:stochMS} we obtain a $\Gamma$-convergence result for the following (asymptotically degenerate coercive) functionals. 

Let $\e>0$, and let $(\alpha_\e)$ and $(\beta_\e)$ be two positive sequences, infinitesimal as $\e\to 0$. For $\om \in \Om$, $x, \xi \in \R^n$ and $\nu \in \mathbb{S}^{n-1}$ we define 
\begin{equation*}
		a_\e(\omega,x)
		:= \begin{cases}
		1 & \mbox{if} \ x \in \R^n \setminus K(\omega), \\
		\alpha_\e & \mbox{if} \ x \in K(\omega),
		\end{cases}
		\qquad
		b_\e(\omega,x)
		:= \begin{cases}
		1 & \mbox{if} \ x \in \R^n \setminus K(\omega), \\
		\beta_\e & \mbox{if} \ x \in K(\omega),
		\end{cases}
	\end{equation*}
$f_\e(\omega,x,\xi):=a_\e(\omega,x)f(\omega,x,\xi)$ and $g_\e(\omega,x,\nu):=b_\e(\omega,x)g(\omega,x,\nu)$. 
\smallskip

We now consider the functionals $E^{\alpha_\e,\beta_\e}_\e(\om) \colon L^1_{\rm loc}(\R^n)\times \mathcal{A} \longrightarrow (0,+\infty]$  defined as
\begin{equation}\label{E_ep-ep}
E^{\alpha_\e,\beta_\e}_\e(\omega)(u,A)
		\displaystyle
		:= \begin{cases}
			\displaystyle
			\int_{A}f_\e\left(\omega,\frac{x}{\e}, \nabla u\right) dx + \int_{S_u\cap A}g_\e\left(\omega,\frac{x}{\e},\nu_u\right)\, d\HH & \mbox{if} \ u_{|A} \in GSBV^p(A), \\
			+\infty & \mbox{otherwise.}
		\end{cases}
\end{equation}

For an overview on the behaviour of the functionals in \eqref{E_ep-ep} in the deterministic case see \cite{BrSo,Zeppierina}. 

\begin{coro}
Let $\Om'\in \mathcal T$ (with $P(\Om')=1$), $f_{\rm hom}$, and $g_{\rm hom}$ be as in Theorem \ref{en-density_vs}. Then, for every $\om\in \Om'$ and every $A\in \mathcal{A}$, the functionals $E^{\alpha_\e,\beta_\e}_\e(\omega)(\cdot,A)$ in \eqref{E_ep-ep} $\Gamma$-converge with respect to the weak convergence in \eqref{weak-conv} to the homogeneous functional $E_{\rm hom}(\om)(\cdot,A)$ defined in \eqref{Gamma-limit}.
\end{coro}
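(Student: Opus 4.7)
The plan is to prove the corollary by a sandwich argument based on the pointwise comparisons
\begin{equation*}
E_\e(\om)(u,A) \leq E^{\alpha_\e,\beta_\e}_\e(\om)(u,A) \leq E^k_\e(\om)(u,A) \qquad \text{whenever } \alpha_\e,\beta_\e \leq \tfrac{1}{k},
\end{equation*}
which are immediate from the definitions: $a_\e$, $b_\e$, and $a^k$ all equal $1$ outside $\e K(\om)$, while inside $\e K(\om)$ one has $a_\e = \alpha_\e \leq 1/k = a^k$ (and similarly for $b_\e$). Thus the degenerate functional $E_\e$ furnishes the lower comparison, while the non-degenerate perturbations $E_\e^k$ from Remark \ref{G-conv-Eeps-k} furnish the upper one.

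For the $\Gamma$-liminf inequality, I fix $\om\in \Om'$ and a sequence $(u_\e) \subset GSBV^p(A)$ with $u_\e\, d_\e(\om,\cdot) \wto u\, d(\om)$ weakly in $L^1(A)$ and $\sup_\e E^{\alpha_\e,\beta_\e}_\e(\om)(u_\e,A) < +\infty$. The lower sandwich bound gives the same energy bound for $E_\e(\om)(u_\e,A)$, so Remark \ref{weak:conv-rem} (Theorem \ref{theo:stochMS} with the weak convergence replacing strong $L^1$) applies and yields
\begin{equation*}
E_{\rm hom}(\om)(u,A) \leq \liminf_{\e\to 0} E_\e(\om)(u_\e,A) \leq \liminf_{\e\to 0} E^{\alpha_\e,\beta_\e}_\e(\om)(u_\e,A).
\end{equation*}

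For the $\Gamma$-limsup inequality, given $u\in GSBV^p(A)$, I would exploit the recovery sequences provided by Remark \ref{G-conv-Eeps-k}: for each $k\in \N^*$ there exists $(u^k_\e)\subset GSBV^p(A)$ with $u^k_\e \to u$ strongly in $L^1_{\rm loc}(\R^n)$ and $\lim_{\e\to 0} E^k_\e(\om)(u^k_\e,A) = E^k_{\rm hom}(\om)(u,A)$. Since $\alpha_\e, \beta_\e\to 0$, for every fixed $k$ there exists $\e_k>0$ such that $\alpha_\e, \beta_\e \leq 1/k$ for $\e<\e_k$, and on this range the upper sandwich bound gives $E^{\alpha_\e,\beta_\e}_\e(\om)(u^k_\e,A) \leq E^k_\e(\om)(u^k_\e,A)$. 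A standard diagonal extraction $u_\e := u^{k(\e)}_\e$ with $k(\e)\nearrow+\infty$ slowly enough, combined with \eqref{approximation-ms}, then yields
\begin{equation*}
\limsup_{\e\to 0} E^{\alpha_\e,\beta_\e}_\e(\om)(u_\e,A) \leq \lim_{k\to +\infty} E^k_{\rm hom}(\om)(u,A) = E_{\rm hom}(\om)(u,A).
\end{equation*}
The required weak convergence $u_\e\, d_\e(\om,\cdot) \wto u\, d(\om)$ in $L^1(A)$ will follow from the strong $L^1$-convergence $u_\e \to u$ coupled with the weak-$*$ convergence $d_\e(\om,\cdot) \wto d(\om)$ in $L^\infty_{\rm loc}(\R^n)$ from Birkhoff's Ergodic Theorem (Remark \ref{rem:B}), since the product of a strongly $L^1$-convergent sequence and a weakly-$*$ $L^\infty$-convergent sequence is weakly convergent in $L^1$.

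The only delicate point is the diagonal extraction, which must simultaneously control the energy and produce a sequence converging in the prescribed weak sense; this is routine (Attouch-type), with $k(\e)$ chosen slowly enough that both $\|u^{k(\e)}_\e - u\|_{L^1(A)}\to 0$ and $E^{\alpha_\e,\beta_\e}_\e(\om)(u_\e,A) \leq E^{k(\e)}_{\rm hom}(\om)(u,A) + o(1)$ as $\e \to 0$.
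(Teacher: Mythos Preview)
Your liminf argument is essentially identical to the paper's: both use the pointwise inequality $E^{\alpha_\e,\beta_\e}_\e \geq E_\e$ together with the $\Gamma$-liminf for $E_\e$ in the weak topology (Remark~\ref{weak:conv-rem}).

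Your limsup argument, however, takes a genuinely different route. The paper does \emph{not} use the upper sandwich $E^{\alpha_\e,\beta_\e}_\e \leq E^k_\e$ or any diagonal extraction. Instead it starts from a recovery sequence $(u_\e)$ for the degenerate functional $E_\e$ itself (provided by Theorem~\ref{theo:stochMS}, after truncation to $L^\infty$), applies the extension operator of Theorem~\ref{stochastic-extension-lemma-SBV} to obtain $(\tilde u_\e)\subset SBV^p(A)\cap L^\infty(A)$ with uniformly bounded $MS^p(\tilde u_\e,A)$, and then observes that
\[
E^{\alpha_\e,\beta_\e}_\e(\om)(\tilde u_\e,A)-E_\e(\om)(u_\e,A)
\leq c(\alpha_\e+\beta_\e)\big(MS^p(\tilde u_\e,A)+\mathcal L^n(A)\big)\to 0.
\]
Your approach trades this constructive step for the abstract two-parameter bound $E^{\alpha_\e,\beta_\e}_\e\leq E^k_\e$ (valid once $\alpha_\e,\beta_\e\leq 1/k$) together with an Attouch-type diagonalisation over $k$; this is cleaner in that it never re-invokes the extension theorem, at the cost of the diagonal bookkeeping. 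Both arguments are correct, and your verification that strong $L^1$-convergence of $u_\e$ combined with $d_\e(\om,\cdot)\stackrel{*}{\wto} d(\om)$ yields the required weak $L^1$-convergence of $u_\e\,d_\e(\om,\cdot)$ is sound.
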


\begin{proof}
The liminf inequality follows immediately from Theorem \ref{Gamma-limit} and Remark~\ref{weak:conv-rem}, due to the lower bound $E^{\alpha_\e,\beta_\e}_\e \geq E_\e$. Let now $A\in \mathcal A$. For the limsup inequality, by a standard truncation argument we can reduce to the case of $u\in SBV^p(A)\cap L^\infty(A)$. Let $(u_\e)\subset L^1_{\rm loc}(\R^n)$ be a sequence such that $u_\e \to u$ in $L^1_{\rm loc}(\R^n)$ and $\lim_{\e\to 0} E_\e(\om)(u_\e,A) = E_{\rm hom}(\om)(u,A)$. With no loss of generality we can assume that $\|u_\e\|_{L^\infty(A)}\leq \|u\|_{L^\infty(A)}$. Let $(\tilde u_\e) \subset SBV^p(A)\cap L^\infty(A)$ be the extension provided by Theorem \ref{stochastic-extension-lemma-SBV}. By Property \ref{property:d}, $\tilde u_\e \to u$ strongly in $L^1(A)$. Furthermore, 
\begin{multline*}
E^{\alpha_\e,\beta_\e}_{\e}(\omega)(\tilde u_{\e}, A) = 
E_{\e}(\omega)(u_{\e}, A) 
+ 
\alpha_\e\int_{\e K(\om)\cap A}f\left(\omega,\frac{x}{\e}, \nabla \tilde u_\e\right) dx \\+ 
\beta_\e\int_{S_{\tilde u_\e}\cap (\e K(\om)\cap A)}g\left(\omega,\frac{x}{\e},\nu_{\tilde u_\e}\right)\, d\HH.
\end{multline*}
Since 
$$
\int_{\e K(\om)\cap A}f\left(\omega,\frac{x}{\e}, \nabla \tilde u_\e\right) dx \leq c_2\int_{\e K(\om)\cap A}(1+|\nabla \tilde u_\e|^p) dx
\leq c_2 \mathcal L^n(A) + c_2 MS^p(\tilde u_\e,A),
$$
and 
$$
\int_{S_{\tilde u_\e}\cap (\e K(\om)\cap A)}g\left(\omega,\frac{x}{\e},\nu_{\tilde u_\e}\right)\, d\HH \leq c_4 MS^p(\tilde u_\e,A),
$$
by Theorem \ref{stochastic-extension-lemma-SBV} we deduce that 
$$
\lim_{\e\to 0} E^{\alpha_\e,\beta_\e}_{\e}(\omega)(\tilde u_{\e}, A) = \lim_{\e\to 0}E_{\e}(\omega)(u_{\e}, A) = E_{\rm hom}(\om)(u,A).
$$

\end{proof}

\section*{Acknowledgments} 
\noindent This work of C. I. Zeppieri was supported by the Deutsche Forschungsgemeinschaft (DFG, German Research Foundation) project number ZE 1186/1-1 and under the Germany Excellence Strategy EXC 2044-390685587, Mathematics M\"unster: Dynamics--Geometry--Structure.

\end{document}